\documentclass[11pt]{amsart}
\usepackage{amsmath,amssymb,amsthm,mathtools}
\usepackage[T1]{fontenc}
\usepackage[utf8]{inputenc}
\usepackage[margin=1in]{geometry}
\usepackage{xcolor}
\usepackage{hyperref}
\usepackage[nameinlink,noabbrev]{cleveref}
\usepackage{enumitem}

\hypersetup{
  colorlinks=true,
  linkcolor=blue!50!black,
  citecolor=blue!50!black,
  urlcolor=blue!50!black,
  pdftitle={Split-prime supercongruence at the mixed CM point},
  pdfauthor={Alex Shvets}
}

\newtheorem{theorem}{Theorem}[section]
\newtheorem{proposition}[theorem]{Proposition}
\newtheorem{lemma}[theorem]{Lemma}
\newtheorem{corollary}[theorem]{Corollary}

\theoremstyle{definition}
\newtheorem{definition}[theorem]{Definition}
\theoremstyle{remark}
\newtheorem{remark}[theorem]{Remark}
\newtheorem*{theorem*}{Theorem}

\crefname{theorem}{theorem}{theorems}
\Crefname{theorem}{Theorem}{Theorems}
\crefname{proposition}{proposition}{propositions}
\Crefname{proposition}{Proposition}{Propositions}
\crefname{lemma}{lemma}{lemmas}
\Crefname{lemma}{Lemma}{Lemmas}
\crefname{corollary}{corollary}{corollaries}
\Crefname{corollary}{Corollary}{Corollaries}
\crefname{remark}{remark}{remarks}
\Crefname{remark}{Remark}{Remarks}
\crefname{section}{section}{sections}
\Crefname{section}{Section}{Sections}

\newcommand{\Q}{\mathbf{Q}}
\newcommand{\Z}{\mathbf{Z}}

\newcommand{\dd}{\mathrm{d}}
\newcommand{\ord}{\operatorname{ord}}
\newcommand{\Res}{\operatorname{Res}}
\newcommand{\Tr}{\operatorname{Tr}}
\newcommand{\Span}{\operatorname{Span}}
\newcommand{\vp}{v_p}
\newcommand{\Fp}{\mathbf{F}_p}

\title[Split-prime supercongruence at the mixed CM point]{Split-prime supercongruence at the mixed CM point $(\tfrac16,\tfrac13;1)$}
\author{Alex Shvets}
\thanks{ORCID: 0009-0005-9802-379X}
\address{Haifa, Israel}
\email{alex@shvets.io}
\urladdr{https://shvets.io}
\date{}

\begin{document}
\begin{abstract}
For the mixed CM point $(a,b,c)=(\tfrac16,\tfrac13,1)$, set
$A_n^{\mathrm{mix}}:=108^n[z^n]{}_2F_1(\tfrac16,\tfrac13;1;z)^3$. We prove unconditionally that for every split prime $p\ge 7$, $p\equiv 1\pmod 3$, and every $m\ge 1$,
\[
A_{mp}^{\mathrm{mix}}\equiv A_m^{\mathrm{mix}}\pmod{p^4}.
\]
The exponent $4$ exceeds the generic weight-$3$ Hodge-gap prediction of $3$ from the Roberts--Rodriguez Villegas framework~\cite{RRV}; the additional factor of $p$ is a CM enhancement traced to the $j=0$ elliptic curve. We also prove the matching unconditional inert-prime obstruction. For inert primes $p\ge 5$, equivalently $p\equiv 2\pmod 3$, the split-style formal-parameter congruence cannot hold on the $q$-side, and the coefficient sequence itself satisfies an explicit Cartier parity law modulo $p$: one Frobenius step exchanges the two Eisenstein branches and two steps return to the original branch.

The proof proceeds via the modular realization on $\Gamma_0(3)$ with $C_{\mathrm{mix}}=3E_{5,\chi_0,\chi_3}-27E_{5,\chi_3,\chi_0}$ and parameter $t=u/(1+27u)^2$, $u=\eta(3\tau)^{12}/\eta(\tau)^{12}$. Lagrange--B\"urmann reduces the supercongruence to three Cartier identities $\Lambda_p(C_{\mathrm{mix}}U_p^\ell)\equiv 0\pmod{p^4}$, $\ell=1,2,3$. The key technical input is a length-three Witt--Cartier pole estimate at the tame elliptic stack point $P_-:u=-1/27$, $j=0$, where $\mu_3$-equivariance of the canonical Frobenius lift, combined with $p\equiv 1\pmod 3$, forces $K\equiv 1\pmod 3$ in the residue calculation and produces the extra digit. A short Atkin--Lehner intertwining transports the resulting classicality from $C_0$ to $uC_0$ and yields the unconditional mixed Cartier cancellation, which is the exact Katz--Dwork input for the main theorem.
\end{abstract}

\maketitle

\section{Introduction}\label{sec:intro}

Let
\[
A_n^{\mathrm{mix}}:=108^n[z^n]{}_2F_1\!\left(\tfrac16,\tfrac13;1;z\right)^3.
\]
The sequence begins
\[
1,\ 18,\ 864,\ 55152,\ 4035906,\ 320012532,\ 26749991016,\ldots .
\]
The main result is the following.

\begin{theorem*}[Main theorem]
For every split prime $p\ge 7$, $p\equiv 1\pmod 3$, and every $m\ge 1$,
\[
A_{mp}^{\mathrm{mix}}\equiv A_m^{\mathrm{mix}}\pmod{p^4}.
\]
The proof establishes the scalar Katz--Dwork congruence
\[
\Lambda_p\!\left(C_{\mathrm{mix}}(q)H_{\mathrm{mix}}(q)^{pm}\right)\equiv C_{\mathrm{mix}}(q)H_{\mathrm{mix}}(q)^m\pmod{p^4}
\]
for every $m\ge 1$, from which the displayed coefficient congruence follows by Lagrange--B\"urmann.
\end{theorem*}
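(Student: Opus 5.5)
We follow the route announced in the abstract: we pass to the modular realization on $\Gamma_0(3)$, reduce the coefficient congruence to a scalar Katz--Dwork congruence, and reduce that in turn to finitely many Cartier cancellations.

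\textbf{Step 1: modular realization.} Write $t=u/(1+27u)^2$ with $u=\eta(3\tau)^{12}/\eta(\tau)^{12}$. We show that ${}_2F_1(\tfrac16,\tfrac13;1;108\,t)^3$, pulled back to $q$, is a weight-$3$ form $F$ on $\Gamma_0(3)$ with character $\chi_3$, expressible through $E_{1,\chi_3}$ and powers of $(1+27u)$. From the Schwarzian we get
\[
q\,\frac{dt}{dq}\,F = C_{\mathrm{mix}}\, t,
\]
which identifies $C_{\mathrm{mix}}=3E_{5,\chi_0,\chi_3}-27E_{5,\chi_3,\chi_0}$ up to a check of finitely many $q$-coefficients. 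We set $H_{\mathrm{mix}}=1/t$ expressed in $q$, suitably normalized so that the Lagrange--B\"urmann identity reads
\[
A_m^{\mathrm{mix}}=[q^0]\,C_{\mathrm{mix}}H_{\mathrm{mix}}^m .
\]
Since $\Lambda_p$ commutes with taking the constant term, the scalar congruence
\[
\Lambda_p\bigl(C_{\mathrm{mix}}H_{\mathrm{mix}}^{pm}\bigr)\equiv C_{\mathrm{mix}}H_{\mathrm{mix}}^m \pmod{p^4}
\]
immediately gives $A_{mp}^{\mathrm{mix}}\equiv A_m^{\mathrm{mix}}\pmod{p^4}$.

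\textbf{Step 2: reduction to three Cartier identities.} Write $H^{p}=H^{\sigma}(q^p)\,(1+pU_p)$, where $H^\sigma(q)=H(q^p)$ is the Frobenius twist on the modular side and $U_p$ is $p$-adically small. Then
\[
H^{pm}=H(q^p)^m\sum_k \binom{m}{k}p^kU_p^k .
\]
Since $\Lambda_p\bigl(f(q^p)g\bigr)=f\,\Lambda_p(g)$, the terms with $k\ge4$ vanish modulo $p^4$. It therefore suffices to prove
\[
\Lambda_p(C_{\mathrm{mix}})\equiv C_{\mathrm{mix}} \pmod{p^4}
\]
and
\[
\Lambda_p\bigl(C_{\mathrm{mix}}U_p^\ell\bigr)\equiv0 \pmod{p^{4-\ell}}, \qquad \ell=1,2,3 .
\]
These can be strengthened uniformly to the modulus $p^4$ after rescaling $U_p$. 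The first congruence is standard from the Eisenstein eigenvalue: $a_p\bigl(E_{5,\chi_0,\chi_3}\bigr)=1+\chi_3(p)p^4$, and $\chi_3(p)=1$ for split $p$. So the real content is the three mixed identities.

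\textbf{Step 3: the pole estimate.} We interpret $C_{\mathrm{mix}}U_p^\ell$ as a section on the Igusa/ordinary locus of $X_0(3)$ with poles only at the supersingular disks and at the elliptic point $P_-$ ($u=-1/27$, $j=0$), where $t$ has a double pole because of the factor $(1+27u)^2$. Classicality then forces $\Lambda_p$ of this section to be a classical form of weight $5$ modulo $p^4$, up to boundary terms controlled by residues at $P_-$. We analyze these using the length-three Witt vector expansion of the canonical Frobenius lift near $P_-$. Because the lift is $\mu_3$-equivariant, only exponents $K\equiv1\pmod3$ survive, and for $p\equiv1\pmod3$ this forces one extra power of $p$ in each residue. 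This is the step I expect to be the main obstacle: getting the Witt--Cartier expansion right to length three and showing that the residue terms vanish modulo $p^4$ rather than $p^3$.

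\textbf{Step 4: transport and conclusion.} The pole analysis is cleanest for the pure piece $C_0$. We move it to $uC_0$ by the Atkin--Lehner involution $W_3$, which swaps $E_{5,\chi_0,\chi_3}$ and $E_{5,\chi_3,\chi_0}$ up to powers of $3$ and exchanges $u$ with $1/(729u)$; this intertwines the two Cartier statements. Writing $C_{\mathrm{mix}}$ as the corresponding combination then gives the mixed cancellation for each $\ell\le3$. The only remaining contribution lies in the Eisenstein space, where $\Lambda_p$ acts by the eigenvalue $1$ modulo $p^4$. This yields the scalar congruence and hence the main theorem.
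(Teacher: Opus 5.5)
Your Steps 1--2 are sound and match the paper's Lagrange--B\"urmann and Katz--Dwork reduction. Expanding $(1+\phi_p)^m$ binomially, with $\phi_p=t(q^p)/t(q)^p-1$, is slightly cleaner than the paper's $e^{-mU_p}$ layers. It lands directly on $\Lambda_p(C_{\mathrm{mix}}\phi_p^\ell)\equiv0\pmod{p^4}$, which is exactly what the closure arguments control, so the triangular $\phi_p/U_p$ conversion becomes unnecessary.

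The gap is in Steps 3--4, where the whole proof lives. First, regarding $C_{\mathrm{mix}}U_p^\ell$ as a section over the ordinary locus does not make its image under $\Lambda_p$ classical: $\Lambda_p$ of a $p$-adic modular form is in general only $p$-adic. The missing globalizing idea is the Hecke finite difference. By $\sigma_p$-linearity of $\Lambda_p$, $\Lambda_p(f\phi_p^\ell)=\sum_{j=0}^{\ell}(-1)^{\ell-j}\binom{\ell}{j}t^j\Lambda_p(f/t^{jp})$. On the weakly holomorphic forms $f/t^{jp}$ one has $\Lambda_p=T_p-p^4\operatorname{Ver}_p$ at split primes. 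So modulo $p^4$ this is the $q$-expansion of the global section $N_\ell(f)=\sum_j(-1)^{\ell-j}\binom{\ell}{j}t^jT_p(f/t^{jp})$, which is divisible by $p^\ell$ and has poles only at the two cusps and $P_-$. The Katz trace at $P_-$ is then used only to bound the pole of this global object there, and regularity at both cusps must be checked separately.

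Second, the residue analysis does not kill the whole principal part at $P_-$. The $\mu_3$-equivariance rules out $K=2,3$ and pushes $K\ge4$ to valuation $\ge4-\ell$, but the $K=1$ term survives, so the pure piece is not classical. Write $\mathfrak E_\ell(f)=N_\ell(f)/p^\ell$ and $g=1+27u$. What holds is $g\,\mathfrak E_\ell(C_0)\equiv\Gamma_\ell\,uC_0\pmod{p^{4-\ell}}$, and these scalars are not zero in general: the paper's computations give $\gamma_1\equiv95\pmod{7^3}$ at $p=7$, in its logarithmic normalization of the layers. Thus $\Lambda_p(C_0\phi_p^\ell/p^\ell)\equiv\Gamma_\ell\,uC_0/g$ has a genuine simple pole at $P_-$.

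Indeed, suppose it were congruent to a form in $M_5(\Gamma_0(3),\chi_3)$ as your Step 3 expects. Its zero constant term would put it on the line of $uC_0$. Transport by $W_3$ would then make the $uC_0$-piece a multiple of $C_0$, whose constant term must also vanish. That forces both pieces to vanish modulo $p^{4-\ell}$, contradicting these computations.

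The vanishing for $C_{\mathrm{mix}}$ is therefore a global scalar identity, not a residue estimate. Apply $T_pw_3^*=\chi_3(p)\,w_3^*T_p$ termwise to $N_\ell(f)$; this is a statement about $T_p$ on global forms, so it cannot be applied to the $q$-series $\Lambda_p(f\phi_p^\ell)$ directly. Using $w_3^*t=t$, $w_3^*C_0=-27\,uC_0$ and $w_3^*(uC_0/g)=-uC_0/g$, you get $g\,\mathfrak E_\ell(uC_0)\equiv27^{-1}\Gamma_\ell\,uC_0$. Only this matching of two nonzero scalars makes the combination $C_0-27\,uC_0$ cancel. That step needs $\chi_3(p)=1$, a second and independent use of splitness that your outline does not register.
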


This is proved as \Cref{thm:split-final-closed}. The exponent $4$ is one more than the generic weight-$3$ Hodge-gap exponent predicted by the Roberts--Rodriguez Villegas framework~\cite{RRV}; the extra factor of $p$ is a CM enhancement attached to the $j=0$ split-ordinary elliptic curve. The same level of enhancement appears in earlier work for related but distinct families: Long--Ramakrishna obtain $p^6$ for a Van Hamme-type sum at the $j=0$ point \cite{LR}, and Zudilin records analogous CM-driven gains \cite{Zudilin-HRJ}. For the present sequence in this exact form, the result appears to be new.

The matching negative result is the inert-prime obstruction.

\begin{theorem*}[Inert obstruction]
If $p\ge 5$ and $\chi_3(p)=-1$, the $p^4$ formal-parameter congruence cannot hold. More precisely, for $m=p^am_0$ with $p\nmid m_0$ and $\beta(m_0)\not\equiv 0\pmod p$,
\[
\vp(c_{mp}^{\mathrm{mix}}-c_m^{\mathrm{mix}})=0.
\]
Thus the obstruction occurs already on the $q$-side, in the Eisenstein coefficient tower.
\end{theorem*}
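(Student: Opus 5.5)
The statement to prove is the inert obstruction: for $p\ge5$ with $\chi_3(p)=-1$, the $q$-expansion coefficients $c_n^{\mathrm{mix}}$ of $C_{\mathrm{mix}}=3E_{5,\chi_0,\chi_3}-27E_{5,\chi_3,\chi_0}$ satisfy $\vp(c_{mp}^{\mathrm{mix}}-c_m^{\mathrm{mix}})=0$ whenever $m=p^am_0$ with $p\nmid m_0$ and $\beta(m_0)\not\equiv0\pmod p$. Since the statement is purely about Eisenstein coefficients, the plan is to compute everything explicitly from the divisor-sum formulas and work modulo $p$.

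\textbf{Step 1: explicit coefficients.} For $n\ge1$,
\[
c_n^{\mathrm{mix}}=3\alpha(n)-27\beta(n),\qquad \alpha(n)=\sum_{d\mid n}\chi_3(d)\,d^4,\qquad \beta(n)=\sum_{d\mid n}\chi_3(n/d)\,d^4,
\]
up to the normalizing constants fixed earlier in the paper, which I would carry along. Both divisor sums are multiplicative. Writing $n=p^km_0$ with $p\nmid m_0$, the factor at $p$ is:
\begin{itemize}
\item for $\alpha$: $\sum_{i\le k}\chi_3(p)^ip^{4i}\equiv1\pmod p$;
\item for $\beta$: $\sum_{i\le k}\chi_3(p)^{k-i}p^{4i}\equiv\chi_3(p)^k=(-1)^k\pmod p$.
\end{itemize}

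\textbf{Step 2: the Cartier parity law.} Step 1 gives
\[
c_{p^km_0}^{\mathrm{mix}}\equiv 3\alpha(m_0)-27(-1)^k\beta(m_0)\pmod p.
\]
So the $\alpha$-branch is Frobenius-stable modulo $p$, while the $\beta$-branch changes sign at each step. Equivalently, $\Lambda_p$ exchanges the two combinations $3E_{5,\chi_0,\chi_3}\pm27E_{5,\chi_3,\chi_0}$, and two steps return to the original branch. This is the parity law stated in the abstract.

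\textbf{Step 3: conclusion.} With $m=p^am_0$, the exponent of $p$ is $a$ for $c_m$ and $a+1$ for $c_{mp}$, so
\[
c_{mp}^{\mathrm{mix}}-c_m^{\mathrm{mix}}\equiv -27\big((-1)^{a+1}-(-1)^a\big)\beta(m_0)=54(-1)^a\beta(m_0)\pmod p.
\]
Since $p\ge5$, $54=2\cdot3^3$ is a $p$-adic unit. Hence $\beta(m_0)\not\equiv0\pmod p$ forces $\vp(c_{mp}^{\mathrm{mix}}-c_m^{\mathrm{mix}})=0$.

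\textbf{Step 4: the failure of the formal-parameter congruence.} If the Katz--Dwork congruence $\Lambda_p(C_{\mathrm{mix}}H^{pm})\equiv C_{\mathrm{mix}}H^m$ held even modulo $p$, then taking $m=0$ (or reading off the leading $q$-coefficients) would give $\Lambda_pC_{\mathrm{mix}}\equiv C_{\mathrm{mix}}\pmod p$. That is exactly $c_{np}\equiv c_n$ for all $n$, which Step 3 contradicts as soon as a single $m_0$ with $\beta(m_0)\not\equiv0$ exists. One such witness is $m_0=1$, where $\beta(1)=1$.

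\textbf{Main obstacle.} The only delicate point is pinning down the exact normalization of $E_{5,\chi_0,\chi_3}$ and $E_{5,\chi_3,\chi_0}$: which character sits on $d$ and which on $n/d$, and whether the constant terms are $p$-units. This matters because the sign flip must land on the branch carrying the coefficient $27$, not on the branch weighted by $3$. I would check it against the listed initial terms through the parametrization $t=u/(1+27u)^2$. If the roles turn out to be swapped, the same argument runs with $\alpha$ and $\beta$ interchanged, and the conclusion is unchanged because $6$ is also a unit.
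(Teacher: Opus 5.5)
Your proposal is correct and follows essentially the same route as the paper. Both arguments use multiplicativity of $s$ and $\beta$ and the congruence $\beta(p^N)\equiv(-1)^N\pmod p$ to get $c_{mp}^{\mathrm{mix}}-c_m^{\mathrm{mix}}\equiv 54(-1)^a\beta(m_0)\pmod p$; the paper additionally notes that the $s$-part difference is divisible by $p^4$. Both then specialize the formal congruence at $X=0$ to rule out $\Lambda_p C_{\mathrm{mix}}\equiv C_{\mathrm{mix}}$. Your normalization worry is moot, since the paper fixes $c_n^{\mathrm{mix}}=3s(n)-27\beta(n)$ exactly as you wrote it.
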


There is also a coefficient-level first-digit theorem. For $\varepsilon\in\{\pm1\}$ define
\[
A_m^{(\varepsilon)}:=[q^m]\bigl(C_0(q)-27\varepsilon\,u(q)C_0(q)\bigr)H_{\mathrm{mix}}(q)^m.
\]
Then for every prime $p\ge5$, every $r\ge0$, and every $m\ge0$,
\[
A_{mp^r}^{\mathrm{mix}}\equiv A_m^{(\chi_3(p)^r)}\pmod p.
\]
In particular, if $p\equiv2\pmod3$, then
\[
A_{mp^{2r}}^{\mathrm{mix}}\equiv A_m^{\mathrm{mix}}\pmod p,
\qquad
A_{mp^{2r+1}}^{\mathrm{mix}}\equiv A_m^{(-1)}\pmod p,
\]
and $A_p^{\mathrm{mix}}\equiv72\pmod p$.

These are proved as \Cref{thm:inert-obstruction,prop:no-inert-dwork,thm:inert-cartier-parity,cor:inert-Ap}. The dichotomy comes from the second Eisenstein piece $E_{5,\chi_3,\chi_0}$: its Euler factor is compatible with the $p^4$ tower at split primes and acquires a unit-root factor at inert primes; on the coefficient sequence this is seen as branch exchange modulo $p$.

\subsection*{Method}

The modular foundation is the order drop $L^{(3)}=(S-108)L^{(2)}$ for the symmetric-cube recurrence specialized at $(1/6,1/3;1)$ from the Mao--Tian framework \cite{MT}, together with the level-$3$ dictionary
\[
u(\tau)=\frac{\eta(3\tau)^{12}}{\eta(\tau)^{12}},\qquad
t=\frac{u}{(1+27u)^2},\qquad
C_{\mathrm{mix}}=(1-27u)C_0,
\]
where $C_0=3E_{5,\chi_0,\chi_3}$ and $uC_0=E_{5,\chi_3,\chi_0}$. Lagrange--B\"urmann gives $A_m^{\mathrm{mix}}=[q^m]C_{\mathrm{mix}}H_{\mathrm{mix}}^m$ with $H_{\mathrm{mix}}=q/t$, and a Katz--Dwork reduction converts the supercongruence to three Cartier identities
\[
\Lambda_p(C_{\mathrm{mix}}U_p^\ell)\equiv 0\pmod{p^4},\qquad \ell=1,2,3,
\]
where $U_p=\log(t(q)^p/t(q^p))\in pq\Z_{(p)}[[q]]$.

The main estimate is a length-three Witt--Cartier pole estimate at the tame elliptic stack point $P_-:u=-1/27$, $j=0$, with effective $\mu_3$-stabilizer (\Cref{prop:pole-estimate-star}). Working in the formal completion $[\operatorname{Spf}\Z_p[\zeta_3][[r]]/\mu_3]$ and using the descended canonical Frobenius lift constructed via a fine prime-to-$3$ atlas, $\mu_3$-equivariance forces the residue index $K$ to satisfy $K\equiv 1\pmod 3$. For $K\ge 2$ this gives $K\ge 4$, which after the valuation count carried out in \Cref{prop:pole-estimate-star} yields the desired $p^{4-\ell}$ bound. The same estimate, applied to both $C_0$ and $uC_0$, gives classicality of $g\Lambda_p(C_0\phi_p^\ell/p^\ell)$ and $g\Lambda_p(uC_0\phi_p^\ell/p^\ell)$ as weight-$5$, $\chi_3$ forms modulo $p^{4-\ell}$.

A short Atkin--Lehner intertwining $T_p w_3^*=\chi_3(p)w_3^*T_p$ then converts the joint classicalities into the bridge identity $\beta_\ell=\gamma_\ell/27$ between the scalars on the two sides; at split primes $\chi_3(p)=1$ the intertwining is untwisted. This cancels the two Eisenstein components in $C_{\mathrm{mix}}=C_0-27uC_0$ and gives the unconditional mixed Cartier identities $\Lambda_p(C_{\mathrm{mix}}U_p^\ell)\equiv 0\pmod{p^4}$ for $\ell=1,2,3$, which is the exact Katz--Dwork input for the main theorem.

\subsection*{Notes on the literature}

The first-digit congruence
$A_p^{\mathrm{mix}}\equiv 18\pmod{p^3}$
that follows from our main theorem is, via Clausen's identity and the substitution $z\mapsto 108t$, equivalent to a special case of Beukers \cite[Ex.~5.2]{Beukers-mod} on the Clausen sum $\sum_{n=0}^{p-1}(3n)!(2n)!/((n!)^5\,108^n)\bmod p^3$; analogous Beukers-method results for binomial sums at other parameter specializations appear in Z.-H. Sun and D. Ye \cite{SunYe}. Our argument differs in route, not in conclusion at this depth. The novelty is the $p^4$-strengthening at all $m\ge 1$, the inert-prime obstruction, and the local stack-level mechanism producing the extra digit.

\subsection*{Outline}

\Cref{sec:modular} records the modular setup and the order drop. \Cref{sec:KD} performs the Katz--Dwork reduction to three Cartier identities. \Cref{sec:WC} proves the Witt--Cartier estimate and the Atkin--Lehner bridge leading to mixed Cartier cancellation. The main theorem (\Cref{thm:split-final-closed}) follows directly from this cancellation via Katz--Dwork reduction. \Cref{sec:inert} proves both the unconditional inert $q$-side obstruction and the coefficient-level inert Cartier parity law modulo $p$. \Cref{sec:comp} summarizes the computational verification.

\subsection*{Notation and central objects}\label{subsec:notation}

We fix the notation used below.

\emph{Sequence and hypergeometric function.} Set
\[
A_n^{\mathrm{mix}}:=108^n[z^n]\,{}_2F_1\!\left(\tfrac16,\tfrac13;1;z\right)^3,\qquad
F_{\mathrm{mix}}(t):=\sum_{n\ge 0}A_n^{\mathrm{mix}}t^n={}_2F_1\!\left(\tfrac16,\tfrac13;1;108t\right)^3.
\]
The factor $108$ is part of the definition of $F_{\mathrm{mix}}$. Apart from the defining coefficient extraction for $A_n^{\mathrm{mix}}$, the symbol $F_{\mathrm{mix}}$ always denotes the rescaled generating function in the variable $t$; the unrescaled cube ${}_2F_1(\tfrac16,\tfrac13;1;z)^3$ appears explicitly only inside the recurrence proof of \Cref{thm:orderdrop}, where it is written out as a function of $z$ without invoking the symbol $F_{\mathrm{mix}}$.
 
\emph{Modular parameters on $\Gamma_0(3)$.} Let $\eta(\tau)=q^{1/24}\prod_{n\ge 1}(1-q^n)$ with $q=e^{2\pi i\tau}$. Define
\[
u(\tau):=\frac{\eta(3\tau)^{12}}{\eta(\tau)^{12}},\quad
g(\tau):=1+27u(\tau),\quad
\alpha(\tau):=\frac{27u}{1+27u},\quad
t(\tau):=\frac{u}{(1+27u)^2}.
\]
The Fricke involution on $X_0(3)$ is $w_3:\tau\mapsto -1/(3\tau)$, acting on $u$ by $w_3:u\mapsto 1/(729u)$, and on $q$-series by pullback $w_3^*$.

\emph{Theta data.} Set $\mathcal A(q):=\sum_{m,n\in\Z}q^{m^2+mn+n^2}$, $\mathcal B(q):=\eta(\tau)^3/\eta(3\tau)$, $\mathcal D(q):=3\eta(3\tau)^3/\eta(\tau)$. These satisfy $\mathcal A^3=\mathcal B^3+\mathcal D^3$ and $\mathcal D^3/\mathcal B^3=27u$ (Borwein \cite{BBG}).

\emph{Characters and Eisenstein series.} Let $\chi_0$ denote the trivial character mod $3$, and $\chi_3$ the unique nontrivial Dirichlet character mod $3$, so $\chi_3(n)=\bigl(\tfrac{n}{3}\bigr)$ for $\gcd(n,3)=1$, $\chi_3(n)=0$ otherwise. For an ordered pair of characters $(\chi,\psi)$ of conductors dividing $3$, with $\chi\psi=\chi_3$ and primitive at the indicated component, the weight-$5$ Eisenstein series is
\[
E_{5,\chi,\psi}(q):=c_0+\sum_{n\ge 1}\Bigl(\sum_{d\mid n}\chi(n/d)\psi(d)d^4\Bigr)q^n,
\]
where the constant term $c_0$ is the standard one fixed by the functional equation; concretely $E_{5,\chi_0,\chi_3}=\tfrac{1}{3}+\sum_{n\ge 1}\bigl(\sum_{d\mid n}\chi_3(d)d^4\bigr)q^n$ and $E_{5,\chi_3,\chi_0}=\sum_{n\ge 1}\bigl(\sum_{d\mid n}\chi_3(n/d)d^4\bigr)q^n=q+15q^2+81q^3+\cdots$. The ordering of subscripts in $E_{5,\chi,\psi}$ follows the convention $\chi$ acts on $n/d$, $\psi$ on $d$; this is the opposite of the PARI/GP \verb|mfeisenstein| argument order, and we refer back to this paragraph whenever a numerical cross-check is performed.
 
\emph{The two Eisenstein lines and the mixed direction.} Set
\[
C_0(q):=3E_{5,\chi_0,\chi_3}(q),\qquad uC_0(q):=E_{5,\chi_3,\chi_0}(q),
\]
where the symbol $uC_0$ is a single name for the modular form $E_{5,\chi_3,\chi_0}$; the equality $u(q)\cdot C_0(q)=E_{5,\chi_3,\chi_0}(q)$ as $q$-series is part of \Cref{thm:modular}(iii). The mixed Eisenstein direction is
\[
C_{\mathrm{mix}}(q):=C_0(q)-27\cdot uC_0(q)=3E_{5,\chi_0,\chi_3}-27E_{5,\chi_3,\chi_0}=(1-27u)C_0,
\]
and the Lagrange--B\"urmann kernel is
\[
H_{\mathrm{mix}}(q):=q/t(q).
\]
Both $C_{\mathrm{mix}}$ and $H_{\mathrm{mix}}$ lie in $\Z[[q]]$.

\emph{Frobenius and Cartier operators on $q$-series.} For a Laurent $q$-series $f(q)=\sum_{n\gg-\infty}a_nq^n$ over $\Z_{(p)}$ with finite principal part, set
\[
\Lambda_p f\,(q):=\sum_{n\gg-\infty}a_{pn}q^n,\qquad
\sigma_p f\,(q):=f(q^p)=\sum_{n\gg-\infty}a_nq^{pn}.
\]
On ordinary power series these operators preserve $\Z_{(p)}[[q]]$, and $\Lambda_p\circ\sigma_p=\mathrm{id}$; on the Laurent extension they remain $\Z_{(p)}$-linear, which is the form required for inputs such as $\Lambda_p(C_{\mathrm{mix}}/t^{rp})$ in \Cref{lem:layer-defect}. The Frobenius defects of \Cref{def:Up,def:nu-phi} are then $U_p=\log(t^p/\sigma_p t)$, $V_p=\log(u^p/\sigma_p u)$, $W_p=\log(g^p/\sigma_p g)$, $\nu_p=\sigma_p t/t^p=e^{-U_p}$, $\phi_p=\nu_p-1$. Here $U_p,V_p,W_p,\phi_p\in pq\Z_{(p)}[[q]]$, while $\nu_p\in 1+pq\Z_{(p)}[[q]]$.

\emph{Stack-local data at $P_-$.} The point $P_-\in X_0(3)$ is $u=-1/27$, equivalently $j=0$; its image in the coarse curve $\mathbb P^1_u$ is regular, but the underlying elliptic curve has $j=0$ with $\mu_6$-automorphism group, of which the $\mu_3$-subgroup preserves the canonical subgroup (\Cref{lem:mu3-preserves-canonical-subgroup}). The local parameter on the corresponding tame stack
\[
[\operatorname{Spf}\Z_p[\zeta_3][[r]]/\mu_3]
\]
is denoted $r$, normalized so that $r^3=u+1/27$ on the cover. The descended canonical Frobenius lift is denoted $\Phi$, and the associated normalized weight-$5$ Katz trace operator is denoted $\mathcal C_{\Phi,5}$ (\Cref{lem:trace-operator-w5}). In the local formula one uses $\mu_3$-equivariant generators $e,e'$ of the weight-$5$, $\chi_3$ line bundle on the source and target. The symbol
\[
\psi:=\frac{\Phi^*t/t^p-1}{p}
\]
is the normalized Frobenius defect; on the Tate chart at $\infty$ it specializes to $\phi_p/p$.

\emph{Cartier--Hecke finite differences.} For $f\in\{C_0,uC_0\}$ and $\ell=1,2,3$, the symbol $\mathfrak E_\ell(f)$ denotes the Hecke finite difference defined in \Cref{lem:cartier-hecke-local}; for $f=C_0$ we also write $\mathfrak E_\ell$, and for $f=uC_0$ we write $\mathfrak E'_\ell$. Two distinct comparisons attached to $\mathfrak E_\ell$ are used:
\begin{itemize}[leftmargin=2em]
\item a \emph{local} identity on the ordinary locus, equating $\mathfrak E_\ell(f)$ to the normalized Katz trace $\mathcal C_{\Phi,5}(f\psi^\ell)$ up to an explicit Verschiebung error of valuation $\ge p^{4-\ell}$ (\Cref{lem:cartier-hecke-local});
\item a \emph{global} holomorphy transfer for $g\cdot\mathfrak E_\ell(f)$ as a weight-$5$, $\chi_3$ form modulo $p^{4-\ell}$, combining the local identity with cusp regularity at $\infty$ and $0$ and the pole estimate at $P_-$ (\Cref{lem:holomorphy-transfer}).
\end{itemize}
On $q$-expansions at the cusp $\infty$ the local identity specializes to
\[
\mathfrak E_\ell(f)\equiv\Lambda_p\!\left(f\,\frac{\phi_p^\ell}{p^\ell}\right)\pmod{p^{4-\ell}}.
\]
The closure argument of \Cref{thm:KB-closed} studies the multiplied form $g\,\mathfrak E_\ell(f)$, where $g=1+27u$; this factor is part of the closure step, not of the definition of $\mathfrak E_\ell$. After the triangular conversion from $\phi_p$-layers to $U_p$-layers (\Cref{lem:phi-U-triangular}), the scalars $\alpha_\ell,\beta_\ell,\gamma_\ell$ realizing classicality of $g\mathfrak E_\ell(C_0)$ and $g\mathfrak E_\ell(uC_0)$ in the basis $\{C_0,uC_0\}$ of $M_5(\Gamma_0(3),\chi_3)$ modulo $p^{4-\ell}$ are defined in \Cref{subsec:bridge}.

\section{Modular setup}\label{sec:modular}

\subsection{Order drop}\label{subsec:orderdrop}

We work in the Ore algebra $\Q[n]\langle S\rangle$ with $Sf(n)=f(n+1)$ and $S\,P(n)=P(n+1)S$.

\begin{theorem}[Order drop]\label{thm:orderdrop}
The sequence $A_n=108^n[z^n]{}_2F_1(1/6,1/3;1;z)^3$ satisfies the order-$2$ recurrence
\[
(n+2)^4 A_{n+2}-6(36n^4+198n^3+424n^2+417n+158)A_{n+1}+324(n+1)(2n+1)(3n+2)(6n+5)A_n=0,
\]
with $A_0=1$, $A_1=18$, $A_2=864$. Equivalently, the third-order Mao--Tian recurrence operator $L^{(3)}$ for the cube of ${}_2F_1$ factors as $L^{(3)}=(S-108)L^{(2)}$ in $\Q[n]\langle S\rangle$, where $L^{(2)}=B_0(n)+B_1(n)S+B_2(n)S^2$ has the displayed coefficients.
\end{theorem}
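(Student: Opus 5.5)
The plan is to pass from the hypergeometric differential equation to a recurrence through the symmetric cube, and then show that the resulting third-order recurrence has $L^{(2)}$ as a right factor, using the initial values to kill the $108^n$ component. Let $\theta=z\,\dd/\dd z$. The function $y={}_2F_1(\tfrac16,\tfrac13;1;z)$ is annihilated by
\[
\mathcal L=\theta^2-z\bigl(\theta+\tfrac16\bigr)\bigl(\theta+\tfrac13\bigr).
\]
Its symmetric cube $\operatorname{Sym}^3\mathcal L$ is a fourth-order operator annihilating $y^3$. The local exponents of $\mathcal L$ at $0$ are $0,0$, so those of $\operatorname{Sym}^3\mathcal L$ at $0$ are $0,0,0,0$. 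I will compute $\operatorname{Sym}^3\mathcal L$ by the standard recipe:
\begin{itemize}[leftmargin=2em]
\item write $\theta^2 y=\frac{z}{1-z}\bigl((\tfrac12)\theta y+\tfrac1{18}y\bigr)$;
\item express $\theta^k(y^3)$ for $k=0,\dots,4$ in the basis $y^3,\ y^2\theta y,\ y(\theta y)^2,\ (\theta y)^3$ of the four-dimensional span;
\item take the linear relation among these five expressions.
\end{itemize}
After clearing denominators I expect the operator to have the shape $\theta^4+z\,P_1(\theta)+z^2P_2(\theta)+z^3P_3(\theta)$, with coefficient polynomials of degree $\le 3$ in $z$. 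Substituting $z=108t$ and reading off coefficients of $t^n$ then gives a third-order recurrence for $A_n$, with leading term $(n+3)^4A_{n+3}$. This should be exactly the Mao--Tian operator $L^{(3)}$ specialized at $(1/6,1/3;1)$ from \cite{MT}, and I may quote that specialization directly instead of rederiving it.

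Next I verify the factorization $L^{(3)}=(S-108)L^{(2)}$ in $\Q[n]\langle S\rangle$. Expanding with $SP(n)=P(n+1)S$ gives
\[
(S-108)L^{(2)}=-108B_0(n)+\bigl(B_0(n+1)-108B_1(n)\bigr)S+\bigl(B_1(n+1)-108B_2(n)\bigr)S^2+B_2(n+1)S^3.
\]
Comparing these four coefficients with those of $L^{(3)}$ is a finite polynomial identity check. In particular the $S^3$ coefficient is $(n+3)^4=B_2(n+1)$, and the $S^0$ coefficient is $-108\cdot324(n+1)(2n+1)(3n+2)(6n+5)$. Equivalently, one can run a right division of $L^{(3)}$ by $L^{(2)}$ and check that it leaves no remainder.

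Given the factorization, the sequence $b_n:=(L^{(2)}A)_n$ satisfies $b_{n+1}=108\,b_n$, so $b_n=108^nb_0$. It therefore suffices to check $b_0=0$. Computing $A_1=18$ and $A_2=864$ from the series,
\[
b_0=16\cdot864-6\cdot158\cdot18+324\cdot1\cdot1\cdot2\cdot5=13824-17064+3240=0.
\]
Hence $L^{(2)}A=0$, which is the displayed order-$2$ recurrence with the stated initial values.

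The main obstacle is the first step: obtaining $\operatorname{Sym}^3\mathcal L$, and hence $L^{(3)}$, with correct coefficients, since this is where arithmetic slips are likely. The factorization check and the initial-value check are mechanical once $L^{(3)}$ is in hand. As an independent safeguard, I will check the order-2 recurrence numerically against the listed terms $55152,\ 4035906,\dots$ for several $n$. This cannot replace the proof, but it catches any error in $L^{(3)}$.
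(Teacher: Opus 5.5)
Your proposal is correct and is essentially the paper's proof. Like the paper, you take the Mao--Tian third-order operator, verify $L^{(3)}=(S-108)L^{(2)}$ by the Ore expansion, and use $b_0=16\cdot 864-6\cdot 158\cdot 18+3240=0$ to kill the $108^n$ solution of $b_{n+1}=108\,b_n$. One remark if you derive $\operatorname{Sym}^3\mathcal L$ yourself rather than quoting \cite{MT}: the monic operator's coefficients, written in $\theta$ as polynomials in $z/(1-z)$, have no $(z/(1-z))^3$ terms, so clearing by $(1-z)^2$ already gives the degree-$2$ operator $L^{(2)}$ (the degree-$3$ form is just $(1-z)$ times it), and the factorization and initial-value steps then become unnecessary.
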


\begin{proof}
The cube $G(z):={}_2F_1(1/6,1/3;1;z)^3$ satisfies a symmetric-cube ODE of order $3$, giving the Mao--Tian recurrence $L^{(3)}A=0$ with the standard coefficients \cite{MT}; here $A_n=108^n[z^n]G(z)$ and the generating function in the rescaled variable $t$ is $F_{\mathrm{mix}}(t)=G(108t)$, as fixed in the Notation. Setting $B_0,B_1,B_2$ as in the statement, a direct Ore computation gives
\[
(S-108)L^{(2)}=-108B_0+(B_0(n+1)-108B_1)S+(B_1(n+1)-108B_2)S^2+B_2(n+1)S^3,
\]
which equals $L^{(3)}$ term by term. Hence $w(n):=L^{(2)}A$ satisfies $w(n+1)=108w(n)$, and direct computation gives $w(0)=324\cdot 10\cdot 1-6\cdot 158\cdot 18+16\cdot 864=0$. Therefore $w\equiv 0$.
\end{proof}

\subsection{Modular realization}\label{subsec:modular-realization}

Define cubic theta and eta data
\[
\mathcal A(q)=\sum_{m,n\in\Z}q^{m^2+mn+n^2},\quad
\mathcal B(q)=\frac{\eta(\tau)^3}{\eta(3\tau)},\quad
\mathcal D(q)=3\frac{\eta(3\tau)^3}{\eta(\tau)},\quad
u(\tau)=\frac{\eta(3\tau)^{12}}{\eta(\tau)^{12}}.
\]

\begin{lemma}[Preliminary theta identity]\label{lem:prelim-theta}
The Eisenstein series $C_0=3E_{5,\chi_0,\chi_3}$ satisfies
\[
C_0=\mathcal A^2\mathcal B^3,
\]
where $\mathcal A$ and $\mathcal B$ are the cubic theta series above.
\end{lemma}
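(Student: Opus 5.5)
We will prove $C_0=\mathcal A^2\mathcal B^3$ by the standard finite-dimensionality argument: show that both sides are modular forms in the same space $M_5(\Gamma_0(3),\chi_3)$, then compare enough $q$-coefficients to pass the Sturm bound.

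\emph{The right-hand side.} The cubic theta series $\mathcal A$ is the theta function of the norm form of $\Z[\zeta_3]$, a binary quadratic form of discriminant $-3$. By the classical theory of theta series it is a modular form of weight $1$ on $\Gamma_0(3)$ with character $\chi_3$, and it is holomorphic at both cusps. The eta quotient $\mathcal B=\eta(\tau)^3/\eta(3\tau)$ has weight $1$. We will check Ligozat's criteria: $\sum\delta r_\delta=3-3=0\equiv 0\pmod{24}$ and $\sum (N/\delta)r_\delta=9-1=8$. Strictly, the second quantity is not $\equiv0\pmod{24}$, so instead of the bare criteria we will appeal to the known identity $\mathcal B(q)=\tfrac32\mathcal A(q^3)-\tfrac12\mathcal A(q)$ of Borwein \cite{BBG}. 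This identity shows directly that $\mathcal B\in M_1(\Gamma_0(3),\chi_3)$.

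It follows that $\mathcal A^2\mathcal B^3$ lies in $M_5(\Gamma_0(3),\chi_3^5)=M_5(\Gamma_0(3),\chi_3)$, and in particular is holomorphic at both cusps.

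\emph{The left-hand side.} $C_0=3E_{5,\chi_0,\chi_3}$ also lies in $M_5(\Gamma_0(3),\chi_3)$, by the standard theory of Eisenstein series with characters; its constant term $3\cdot\tfrac13=1$ is the $L$-value normalization fixed in the Notation. The space $M_5(\Gamma_0(3),\chi_3)$ is spanned by the two Eisenstein series $E_{5,\chi_0,\chi_3}$ and $E_{5,\chi_3,\chi_0}$, since it has no cusp forms. Its dimension is $2$, and the Sturm bound is $\lfloor 5\cdot 4/12\rfloor=1$.

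\emph{Coefficient comparison.} It therefore suffices to match the coefficients of $q^0$ and $q^1$:
\begin{itemize}[leftmargin=2em]
\item From $\mathcal A=1+6q+\cdots$ and $\mathcal B=1-3q+\cdots$ we get $\mathcal A^2\mathcal B^3=1+(12-9)q+\cdots=1+3q+\cdots$.
\item The left side is $C_0=1+3\cdot\chi_3(1)q+\cdots=1+3q+\cdots$.
\end{itemize}
The two expansions agree through $q^1$. For a numerical safeguard we will also check a few further coefficients, for example $q^2$: on the left $3(1+\chi_3(2)\cdot16)=-45$, and on the right this is expanded from the series.

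\emph{Main obstacle.} The main obstacle is the modularity of $\mathcal B$ with the correct character. The cleanest route is the Borwein identity above, which expresses $\mathcal B$ as a linear combination of $\mathcal A(q)$ and $\mathcal A(q^3)$, both weight-$1$ forms on $\Gamma_0(3)$ with character $\chi_3$. Once that is in place, the argument is a two-coefficient check.
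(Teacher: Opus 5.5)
Your overall plan is sound: put both sides in the two-dimensional space $M_5(\Gamma_0(3),\chi_3)$, use the Sturm bound $1$, and match $q^0$ and $q^1$. Your coefficient checks are also correct; both sides begin $1+3q-45q^2+\cdots$.

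The gap is exactly at the step you call the main obstacle. The identity $\mathcal B(q)=\tfrac32\mathcal A(q^3)-\tfrac12\mathcal A(q)$ does not show $\mathcal B\in M_1(\Gamma_0(3),\chi_3)$, because $\mathcal A(q^3)=\mathcal A(3\tau)$ is a form on $\Gamma_0(9)$, not on $\Gamma_0(3)$. In fact the claimed conclusion is false. The space $M_1(\Gamma_0(3),\chi_3)$ is one-dimensional, spanned by $\mathcal A=1+6q+\cdots$, and neither $\mathcal B=1-3q+\cdots$ nor $\mathcal A(q^3)=1+6q^3+\cdots$ is a multiple of $\mathcal A$. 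The failed Ligozat congruence $\sum(3/\delta)r_\delta=8\not\equiv 0\pmod{24}$ was detecting precisely this; it was not a weakness of the criterion to be bypassed. As written, your argument only places $\mathcal A^2\mathcal B^3$ in $M_5(\Gamma_0(9),\chi_3)$. There the Sturm bound is $\lfloor 5\cdot 12/12\rfloor=5$, so matching $q^0$ and $q^1$ proves nothing.

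The repair is to apply the eta-quotient criterion to $\mathcal B^3=\eta(\tau)^9/\eta(3\tau)^3$ rather than to $\mathcal B$:
\begin{itemize}
\item $\sum\delta r_\delta=9-9=0$ and $\sum(3/\delta)r_\delta=27-3=24$;
\item the character is $\bigl(\tfrac{-27}{\cdot}\bigr)=\chi_3$;
\item the orders at the cusps $0$ and $\infty$ are $1$ and $0$.
\end{itemize}
Hence $\mathcal B^3\in M_3(\Gamma_0(3),\chi_3)$ and $\mathcal A^2\mathcal B^3\in M_5(\Gamma_0(3),\chi_3)$, and your two-coefficient comparison then finishes the proof. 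Alternatively, you can stay at level $9$ and compare coefficients through $q^5$; they do agree, for example both $q^5$-coefficients equal $-1872$.

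Once repaired, your route genuinely differs from the paper's, which simply quotes Moy's identities $C_0=\mathcal B^3\,q\frac{d}{dq}\log u$ and $q\frac{d}{dq}\log u=\mathcal A^2$. Your argument is self-contained modulo standard modularity facts, and it is the same dimension/Sturm argument the paper later uses to identify $uC_0$. The paper's route instead delivers the identity directly in the factored form $C_0=\mathcal B^3\cdot q\frac{d}{dq}\log u$, which is the form used in the modular realization theorem.
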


\begin{proof}
We use only the external level-$3$ theta identities. Moy's differential identity \cite{Moy} gives
\[
C_0=\mathcal B^3\,q\frac{d}{dq}\log u,
\qquad
q\frac{d}{dq}\log u=\mathcal A^2.
\]
Hence $C_0=\mathcal A^2\mathcal B^3$.
\end{proof}

\begin{lemma}[Preliminary divisor and order chart]\label{lem:prelim-divisor-chart}
Let $R=\Z_p$ with $p\ge7$, and let $S$ be either $R$ or $R/p^N R$. Let $P_\infty$ be the cusp with $q=0$, let $P_0$ be the other cusp, and let $P_-:1+27u=0$. On the coarse curve $X_0(3)_S\simeq\mathbb P^1_{u,S}$, the following local order table holds:
\[
\begin{array}{c|ccc}
& P_\infty & P_0 & P_- \\ \hline
u & 1 & -1 & 0\\
C_0 & 0 & 1 & 2\ \text{\rm(stack)}\\
uC_0 & 1 & 0 & 2\ \text{\rm(stack)}.
\end{array}
\]
All leading coefficients in these local descriptions are $S$-units.
\end{lemma}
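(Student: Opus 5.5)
The plan is to compute each entry of the table directly from explicit models: the eta quotient $u$, the theta product $C_0=\mathcal A^2\mathcal B^3$ of \Cref{lem:prelim-theta}, and the product $uC_0$. At each point we use a local parameter, and we check at the end that every leading coefficient is a unit in $\Z_p$ for $p\ge7$. Since only the primes $2$ and $3$ can appear in the leading coefficients, this unit check is automatic once those coefficients are identified.

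\emph{Row for $u$.} The map $u$ is a Hauptmodul for $\Gamma_0(3)$, so $X_0(3)_S\simeq\mathbb P^1_{u,S}$ and $u$ has degree one.
\begin{itemize}[leftmargin=2em]
\item At $P_\infty$: $u=q\prod(1-q^{3n})^{12}/(1-q^n)^{12}$, so $u$ has a simple zero in the parameter $q$, with leading coefficient $1$.
\item At $P_0$: the Fricke relation $w_3^*u=1/(729u)$ shows that $u$ has a simple pole in the local parameter $w_3^*q$. The leading coefficient is a power of $3$, which is a unit.
\item At $P_-$: $u=-1/27$ is finite and nonzero, so the order is $0$.
\end{itemize}

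\emph{Rows for $C_0$ and $uC_0$.}
\begin{itemize}[leftmargin=2em]
\item At $\infty$: $C_0=1+O(q)$ has order $0$, and $uC_0=q+15q^2+\cdots$ has order $1$.
\item At $P_0$: apply $w_3$, which interchanges the two Eisenstein lines up to a power of $3$ and a power of $\tau$. Concretely, $\mathcal B|w_3$ is a multiple of $\mathcal D$ and $\mathcal A|w_3$ is a multiple of $\mathcal A$. Hence $C_0|w_3=c\,\mathcal A^2\mathcal D^3$, which vanishes to order $1$ because $\mathcal D=3q^{1/3}\cdots$ in the $3\tau$ normalization. Likewise $uC_0|w_3$ is a unit multiple of $C_0$, so it has order $0$.
\item Consistency check: $\operatorname{ord}(uC_0)=\operatorname{ord}(u)+\operatorname{ord}(C_0)$ must hold at each point. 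This gives $1=1+0$ at $\infty$ and $0=-1+1$ at $P_0$, and it forces the $P_-$ orders of $C_0$ and $uC_0$ to coincide.
\end{itemize}

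\emph{The stack order at $P_-$.} This is the main obstacle. The valence formula for weight $5$ on $\Gamma_0(3)$ gives total degree $5\cdot 4/12=5/3$. Since $-1\notin\Gamma_0(3)$ there are no points of order $2$, and the single elliptic point of order $3$ is $P_-$, where $\mathcal A^3=\mathcal B^3+\mathcal D^3$ and $1+27u=0$ correspond to $\mathcal A=0$.

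I would argue directly from the formula $C_0=\mathcal A^2\mathcal B^3$. The form $\mathcal B$ does not vanish at $P_-$, since there $\mathcal D^3/\mathcal B^3=-1$ is finite. The form $\mathcal A$ has a simple zero in the stack parameter $r$, because $\mathcal A^3$ is a unit multiple of $\mathcal B^3(1+27u)$ and $1+27u=27r^3$. So $C_0$ has stack order $2$ in $r$, which is coarse order $2/3$. The total is then $0+1/3$ from the cusps, using the width-$3$ and width-$1$ normalizations, plus $2/3+\ldots$, and I would verify that this agrees with the $5/3$ count.

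The leading coefficient in $r$ is a product of powers of $3$ and the value $\mathcal B(P_-)^3$ (together with $\zeta_3$ factors), and this value is a unit. To see this, use the fact that the $j=0$ curve has good reduction over $\Z_p[\zeta_3]$ for $p\ge7$. The form $\mathcal B^3$ is a weight-$3$ form whose $q$-expansion is integral with leading coefficient $1$. It vanishes only at the cusp $0$, so its reduction mod $p$ is nonzero at $P_-$.

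Finally, to pass from $R$ to $S=R/p^N$, base change along $R\to S$: the orders and the unit property of the leading coefficients are preserved.
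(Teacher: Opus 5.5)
Your argument is essentially the paper's: you use $C_0=\mathcal A^2\mathcal B^3$ and $uC_0=\mathcal A^2\mathcal D^3/27$, read the orders at $P_\infty$ off the eta expansions, pass to $P_0$ via the Fricke exchange $\mathcal B\leftrightarrow\mathcal D$ (which fixes $\mathcal A$ up to a factor), and obtain stack order $2$ at $P_-$ from $\mathcal A^3=\mathcal B^3(1+27u)$ with $\mathcal B$ nonvanishing there. The valence-formula aside is not needed and contains slips: $-I\in\Gamma_0(3)$, and the cusp $0$ contributes $1$ rather than $1/3$, as your own Fricke computation shows, so the tally is $0+1+2/3=5/3$. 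Your mod-$p$ degree argument for $\mathcal B^3$ at $P_-$ is a useful addition, since it justifies the $p$-adic unit claim that the paper only asserts.
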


\begin{proof}
By \Cref{lem:prelim-theta},
\[
C_0=\mathcal A^2\mathcal B^3,
\qquad
uC_0=\frac{\mathcal A^2\mathcal D^3}{27},
\qquad
g:=1+27u=\frac{\mathcal A^3}{\mathcal B^3},
\]
using the Borwein identities $\mathcal A^3=\mathcal B^3+\mathcal D^3$ and $27u=\mathcal D^3/\mathcal B^3$ \cite{BBG}. At $P_\infty$, the eta-products give $\mathcal B=1+O(q)$ and $\mathcal D^3/27=q\varepsilon_\infty(q)$ with $\varepsilon_\infty(0)\in R^\times$; hence $u$ has order $1$, while $\mathcal A$ and $\mathcal B$ are units. Thus $C_0$ is a unit and $uC_0$ has order $1$.

At $P_0$, put $v=1/u$. The Fricke transformation of eta-products exchanges $\mathcal B$ and $\mathcal D$ up to a unit. Hence $\mathcal D$ is a unit, $v=27\mathcal B^3/\mathcal D^3$ is a uniformizer, and $\mathcal A$ is a unit. Therefore $C_0=\mathcal A^2\mathcal B^3$ has order $1$, while $uC_0=\mathcal A^2\mathcal D^3/27$ is a unit.

At $P_-$, the functions $\mathcal B$ and $\mathcal D$ are units and $g=1+27u=\mathcal A^3/\mathcal B^3$. Choose the stack uniformizer $r=\mathcal A/\mathcal B$, so $g=r^3$. Then $C_0=r^2\mathcal B^5$ and $u=-1/27+r^3/27$ is a unit, so both $C_0$ and $uC_0$ have stack order $2$. Since $p\ge7$, the constants $27$ and the displayed unit leading coefficients remain invertible over $S$.
\end{proof}

\begin{theorem}[Modular realization]\label{thm:modular}
With $\alpha=27u/(1+27u)$, $t=u/(1+27u)^2$, the following hold.
\begin{enumerate}[label=\textup{(\roman*)}]
\item $\mathcal A^3=\mathcal B^3+\mathcal D^3$, and $108t=4\alpha(1-\alpha)$.
\item ${}_2F_1(1/6,1/3;1;108t)^3=\mathcal A(q)^3=(1+27u)\eta(\tau)^9/\eta(3\tau)^3$.
\item Setting $C_{\mathrm{mix}}(q):=F_{\mathrm{mix}}(t(q))\cdot\frac{q}{t(q)}\cdot\frac{\dd t}{\dd q}$, one has
\[
C_{\mathrm{mix}}=(1-27u)C_0=3E_{5,\chi_0,\chi_3}-27E_{5,\chi_3,\chi_0},
\]
where $C_0=3E_{5,\chi_0,\chi_3}$ and $uC_0=E_{5,\chi_3,\chi_0}$.
\item $H_{\mathrm{mix}}(q):=q/t(q)=\prod_{3\nmid n}(1-q^n)^{12}\cdot(1+27u(q))^2$.
\end{enumerate}
\end{theorem}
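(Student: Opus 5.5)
We prove the four items in order; each reduces to classical level-$3$ identities together with the data already assembled in \Cref{lem:prelim-theta,lem:prelim-divisor-chart}.

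\emph{Item (i).} The cubic identity $\mathcal A^3=\mathcal B^3+\mathcal D^3$ is the Borwein identity \cite{BBG}, and we quote it. Put $x:=\mathcal D^3/\mathcal A^3$. The Borwein relation $27u=\mathcal D^3/\mathcal B^3$ gives
\[
\alpha=\frac{27u}{1+27u}=\frac{\mathcal D^3}{\mathcal B^3+\mathcal D^3}=x,\qquad 1-\alpha=\frac{\mathcal B^3}{\mathcal A^3}.
\]
The identity $108t=4\alpha(1-\alpha)$ is purely algebraic: $4\alpha(1-\alpha)=4\cdot 27u/(1+27u)^2=108t$.

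\emph{Item (ii).} Start from the cubic Borwein inversion ${}_2F_1(\tfrac13,\tfrac23;1;x)=\mathcal A$. Next apply the quadratic transformation
\[
{}_2F_1\!\left(\tfrac13,\tfrac23;1;x\right)={}_2F_1\!\left(\tfrac16,\tfrac13;1;4x(1-x)\right),
\]
which holds for $x$ near $0$. This is the case $a=1/3$ of ${}_2F_1(a,1-a;c;x)$ with $c=1$; more precisely it is the Goursat identity ${}_2F_1(2a,2b;a+b+\tfrac12;x)={}_2F_1(a,b;a+b+\tfrac12;4x(1-x))$ with $a=1/6$, $b=1/3$. Combining these with (i) gives ${}_2F_1(\tfrac16,\tfrac13;1;108t)=\mathcal A$, and cubing gives the first equality. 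For the second, write $\mathcal A^3=g\mathcal B^3$, where $g=1+27u$ as in the proof of \Cref{lem:prelim-divisor-chart}, and substitute $\mathcal B^3=\eta(\tau)^9/\eta(3\tau)^3$. Both sides are power series in $q$ with constant term $1$, so no branch issue arises.

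\emph{Item (iii).} Since $F_{\mathrm{mix}}(t(q))=\mathcal A^3$ by (ii), we compute
\[
\frac{q}{t}\frac{\dd t}{\dd q}=q\frac{\dd}{\dd q}\log t=\left(1-\frac{2\cdot 27u}{1+27u}\right)q\frac{\dd}{\dd q}\log u=\frac{1-27u}{1+27u}\,\mathcal A^2,
\]
using Moy's identity $q\frac{\dd}{\dd q}\log u=\mathcal A^2$ from the proof of \Cref{lem:prelim-theta}. Hence
\[
C_{\mathrm{mix}}=\mathcal A^3\cdot\frac{1-27u}{g}\,\mathcal A^2=(1-27u)\,\mathcal A^2\mathcal B^3=(1-27u)C_0,
\]
by $\mathcal A^3/g=\mathcal B^3$ and \Cref{lem:prelim-theta}. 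It remains to show $uC_0=E_{5,\chi_3,\chi_0}$, that is, $\mathcal A^2\mathcal D^3/27=E_{5,\chi_3,\chi_0}$. Both sides lie in $M_5(\Gamma_0(3),\chi_3)$, which has dimension $2$. The plan for this step is:
\begin{itemize}[leftmargin=2em]
\item Obtain the left side from $C_0$ by the Fricke involution: $w_3$ exchanges $\mathcal B$ and $\mathcal D$ up to the standard scalar, fixes $\mathcal A$ up to a scalar, and exchanges $E_{5,\chi_0,\chi_3}$ and $E_{5,\chi_3,\chi_0}$ up to a constant.
\item Alternatively, and more robustly, use the Sturm bound. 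For weight $5$ on $\Gamma_0(3)$ the index is $4$, so the bound is $\lfloor 5\cdot4/12\rfloor+1=2$, and it suffices to compare coefficients through $q^1$.
\end{itemize}
The comparison works: $\mathcal A^2\mathcal D^3/27=q+O(q^2)$ and $E_{5,\chi_3,\chi_0}=q+\cdots$, with constant terms $0=0$ and $q^1$-coefficients $1=1$. Expanding to $q^2$ as a check, both have coefficient $15$.

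\emph{Item (iv).} Since $q/t=q(1+27u)^2/u$ and $q/u=\eta(\tau)^{12}/\eta(3\tau)^{12}\cdot q=\prod_n(1-q^n)^{12}/\prod_n(1-q^{3n})^{12}=\prod_{3\nmid n}(1-q^n)^{12}$, this is immediate.

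\emph{Main obstacle.} The main obstacle is justifying the quadratic transformation in (ii) with the exact parameters, together with the identification in (iii) of $uC_0$ as a specific Eisenstein series rather than merely some form in the space. We would handle the latter by the Sturm-bound coefficient check, which avoids tracking Fricke constants.
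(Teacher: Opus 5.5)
Your proposal is correct and follows essentially the same route as the paper. That route is: Borwein's cubic identities with the Goursat quadratic transformation at $x=\alpha$ for (i)--(ii); logarithmic differentiation of $t=u/g^2$ together with $\mathcal A^3=g\mathcal B^3$ to get $C_{\mathrm{mix}}=(1-27u)C_0$; a Sturm-bound comparison of $a_0,a_1$ in $M_5(\Gamma_0(3),\chi_3)$ to identify $uC_0=E_{5,\chi_3,\chi_0}$; and the eta-product expansion for (iv).

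The only step you assert rather than justify is that $\mathcal A^2\mathcal D^3/27$ is a holomorphic form of weight $5$ and character $\chi_3$ on $\Gamma_0(3)$, which you need before invoking Sturm. The paper obtains this from the order table in \Cref{lem:prelim-divisor-chart}. It also follows directly, since $\mathcal A$ has weight $1$ and character $\chi_3$ and $\mathcal D^3$ is a holomorphic weight-$3$, character-$\chi_3$ eta quotient.
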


\begin{proof}
The cubic theta identity $\mathcal A^3=\mathcal B^3+\mathcal D^3$ and $\mathcal D^3/\mathcal B^3=27u$ give $\alpha=27u/(1+27u)$ and $108t=4\alpha(1-\alpha)$. The quadratic transformation
${}_2F_1(\tfrac16,\tfrac13;1;4x(1-x))={}_2F_1(\tfrac13,\tfrac23;1;x)$
applied at $x=\alpha$, combined with the Borwein cubic identity ${}_2F_1(\tfrac13,\tfrac23;1;\alpha)=\mathcal A$ \cite{BBG}, gives (ii).

For (iii), set $F_0(\tau):=\eta(\tau)^9/\eta(3\tau)^3$ and $C_0(q):=F_0(\tau)\cdot(q/u)(\dd u/\dd q)$. Moy's level-$3$ identity \cite{Moy} gives $C_0=3E_{5,\chi_0,\chi_3}$. From $F_{\mathrm{mix}}=(1+27u)F_0$ and $t=u(1+27u)^{-2}$ a direct differentiation gives
\[
\frac{q}{t}\frac{\dd t}{\dd q}=\frac{q}{u}\frac{\dd u}{\dd q}\cdot\frac{1-27u}{1+27u},
\]
hence $C_{\mathrm{mix}}=(1-27u)C_0$. The identification $uC_0=E_{5,\chi_3,\chi_0}$ follows from a direct Sturm-bound and Eisenstein-space argument. By \Cref{lem:prelim-divisor-chart}, $uC_0$ is holomorphic at $P_\infty$, $P_0$, and $P_-$, with $a_0=0$ and $a_1=1$ at $P_\infty$; away from the cusps and elliptic point this is clear from the eta/theta expression $uC_0=u\mathcal A^2\mathcal B^3$ of \Cref{lem:prelim-theta}. Hence $uC_0\in M_5(\Gamma_0(3),\chi_3)$. The index $[\operatorname{SL}_2(\Z):\Gamma_0(3)]=4$, so the Sturm bound for weight $5$ on $\Gamma_0(3)$ with any character is $\lfloor 5\cdot 4/12\rfloor=1$. Hence any form in $M_5(\Gamma_0(3),\chi_3)$ is determined by its $q$-coefficients $a_0$ and $a_1$, giving $\dim M_5(\Gamma_0(3),\chi_3)\le 2$. The two Eisenstein series $E_{5,\chi_0,\chi_3}$ and $E_{5,\chi_3,\chi_0}$ are linearly independent: $E_{5,\chi_0,\chi_3}$ has $a_0=1/3$, $a_1=1$, while $E_{5,\chi_3,\chi_0}$ has $a_0=0$, $a_1=1$. Hence $\dim M_5(\Gamma_0(3),\chi_3)=2$, the Eisenstein subspace exhausts $M_5$, and $\dim S_5(\Gamma_0(3),\chi_3)=0$. Since $u(q)C_0(q)$ has $a_0=0$, $a_1=1$, the Sturm bound forces $uC_0=E_{5,\chi_3,\chi_0}$ (matching $q$-coefficients up to the Sturm bound, then equality on all of $M_5$). Both expand as $q+15q^2+81q^3+\cdots$. Part (iv) follows from $q/u=\prod_{3\nmid n}(1-q^n)^{12}$ together with $H_{\mathrm{mix}}=(q/u)(1+27u)^2$.
\end{proof}

\subsection{Lagrange--B\"urmann and the Fricke-trace direction}

\begin{theorem}[Lagrange--B\"urmann]\label{thm:LB}
For every $m\ge 0$, $A_m^{\mathrm{mix}}=[q^m]C_{\mathrm{mix}}(q)H_{\mathrm{mix}}(q)^m$.
\end{theorem}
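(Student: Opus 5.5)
The proof is the standard Lagrange--B\"urmann argument applied to the power series $t(q)$. Recall that $F_{\mathrm{mix}}(t)=\sum_n A_n^{\mathrm{mix}}t^n$, and that $t(q)=u/(1+27u)^2=q+O(q^2)$ lies in $q\Z[[q]]$ with unit linear coefficient. So $t$ is an admissible formal change of variables. By \Cref{thm:modular}(iii),
\[
C_{\mathrm{mix}}(q)H_{\mathrm{mix}}(q)^m=F_{\mathrm{mix}}(t(q))\,\frac{q^{m+1}}{t(q)^{m+1}}\,\frac{\dd t}{\dd q}.
\]
Hence $[q^m]$ of the left side equals $[q^{-1}]$ of $F_{\mathrm{mix}}(t(q))\,t(q)^{-m-1}\,t'(q)$.

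We then interpret this as a formal residue:
\[
\Res_q\bigl(F_{\mathrm{mix}}(t(q))\,t(q)^{-m-1}\,\dd t(q)\bigr).
\]
Invariance of formal residues under substitution $t=t(q)$ with $t(q)=q+O(q^2)$ converts it into
\[
\Res_t\bigl(F_{\mathrm{mix}}(t)\,t^{-m-1}\,\dd t\bigr)=[t^m]F_{\mathrm{mix}}(t)=A_m^{\mathrm{mix}}.
\]
To justify the invariance, expand $F_{\mathrm{mix}}(t)t^{-m-1}$ as a finite Laurent tail plus a power series. On each monomial $t^k$ one has $t^k\,\dd t=\dd(t^{k+1}/(k+1))$ for $k\ne-1$, and the residue of an exact differential vanishes. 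For $k=-1$, we have $\dd t/t=\dd q/q+\dd\log(t/q)$ with $t/q\in 1+q\Z[[q]]$, so the residue is $1$ on both sides.

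The only points requiring care are bookkeeping. First, $C_{\mathrm{mix}}$ as defined in \Cref{thm:modular}(iii) is exactly $F_{\mathrm{mix}}(t)\,(q/t)\,\dd t/\dd q$, so no extra Jacobian factor appears. Second, the argument works over $\Q$; the integrality $C_{\mathrm{mix}},H_{\mathrm{mix}}\in\Z[[q]]$ is needed only later, not for this identity. I expect no real obstacle here; the main thing to verify is that $t$ has leading coefficient $1$. This follows from $u=q+O(q^2)$ and $1+27u=1+O(q)$.
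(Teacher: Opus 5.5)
Your proposal is correct and follows the paper's proof: both write $A_m^{\mathrm{mix}}$ as the formal residue of $F_{\mathrm{mix}}(t)\,t^{-m-1}\,\dd t$ and change variables along $t=t(q)=q+O(q^2)$, using \Cref{thm:modular}(iii) to identify the pulled-back integrand with $C_{\mathrm{mix}}H_{\mathrm{mix}}^m\,q^{-m-1}\,\dd q$. Your monomial-by-monomial check of residue invariance makes explicit the step the paper leaves implicit: exactness handles $k\ne-1$, and for $k=-1$ you use $\dd t/t=\dd q/q+\dd\log(t/q)$.
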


\begin{proof}
$A_m=[t^m]F_{\mathrm{mix}}(t)=\Res_{t=0}F_{\mathrm{mix}}(t)t^{-m-1}\dd t$. Substituting $t=t(q)$ and using $t(q)=q+O(q^2)$,
$A_m=[q^m]F_{\mathrm{mix}}(t(q))\cdot(q/t(q))\cdot(\dd t/\dd q)\cdot H_{\mathrm{mix}}^m=[q^m]C_{\mathrm{mix}}H_{\mathrm{mix}}^m$.
\end{proof}

\begin{remark}[Why the diagonal $C_0-27uC_0$]\label{rem:fricke-trace-line}
The Fricke involution on $X_0(3)$ is $w_3:u\mapsto 1/(729u)$; the parameter $t$ is $w_3$-invariant. The Eisenstein lines are exchanged: $w_3^*C_0=-27uC_0$ and $w_3^*(uC_0)=-C_0/27$. Hence $\omega_{\mathrm{mix}}=C_{\mathrm{mix}}\,\dd q/q=(1-27u)C_0\,\dd q/q$ is the unique Fricke-trace direction in $\Span\{C_0,uC_0\}$ descending to the quotient $\mathbb P^1_t$. The module-wide $p^4$-congruence on the full span $\Span\{C_0,uC_0\}$ is false: at $p=7$ the coefficient $[q^1](\Lambda_7(uC_0H_0^7)-uC_0H_0)=22478120$ has $7$-adic valuation $1$, not $\ge 4$. The mixed congruence is therefore not a module-wide phenomenon but a feature of the specific Fricke-trace direction.
\end{remark}

\section{Katz--Dwork reduction}\label{sec:KD}

For $p\ge 5$ and $m\ge 1$ define $M_{m,p}:=[q^{mp}]C_{\mathrm{mix}}(q)H_{\mathrm{mix}}(q^p)^m$.

\begin{proposition}[Main Frobenius term]\label{prop:mainfrobenius}
If $\chi_3(p)=1$, then $M_{m,p}\equiv A_m^{\mathrm{mix}}\pmod{p^4}$ for every $m\ge 1$.
\end{proposition}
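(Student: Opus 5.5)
The plan is to move the Frobenius twist off $H_{\mathrm{mix}}$ and onto $C_{\mathrm{mix}}$, and then check the resulting congruence on each of the two Eisenstein pieces through their divisor-sum coefficients. No stack-level input should be needed here.

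\emph{Step 1: reduction.} Write $H_{\mathrm{mix}}(q^p)^m=\sigma_p(H_{\mathrm{mix}}^m)$. Since $\Lambda_p(f\cdot\sigma_p h)=(\Lambda_p f)\,h$ for power series over $\Z_{(p)}$, we get
\[
M_{m,p}=[q^{mp}]\,C_{\mathrm{mix}}\,\sigma_p(H_{\mathrm{mix}}^m)=[q^m]\,(\Lambda_pC_{\mathrm{mix}})\,H_{\mathrm{mix}}^m .
\]
By \Cref{thm:LB}, $A_m^{\mathrm{mix}}=[q^m]C_{\mathrm{mix}}H_{\mathrm{mix}}^m$. Because $H_{\mathrm{mix}}\in\Z[[q]]$ (\Cref{thm:modular}(iv)), it suffices to prove the coefficientwise congruence
\[
\Lambda_pC_{\mathrm{mix}}\equiv C_{\mathrm{mix}}\pmod{p^4\Z_{(p)}[[q]]}.
\]
By \Cref{thm:modular}(iii), $C_{\mathrm{mix}}=3E_{5,\chi_0,\chi_3}-27E_{5,\chi_3,\chi_0}$. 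By linearity it is enough to show $\Lambda_pE\equiv E\pmod{p^4}$ for each of the two series separately. The constant term $1/3$ is a $p$-adic unit for $p\ge7$ and is fixed by $\Lambda_p$, so it causes no trouble.

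\emph{Step 2: the piece $E_{5,\chi_0,\chi_3}$.} For $n\ge1$, the $n$-th coefficient of $\Lambda_pE_{5,\chi_0,\chi_3}$ is $\sum_{d\mid pn}\chi_3(d)d^4$. Split the divisors of $pn$ into those dividing $n$ and the rest.
\begin{itemize}[leftmargin=2em]
\item The divisors dividing $n$ contribute exactly the $n$-th coefficient of $E_{5,\chi_0,\chi_3}$.
\item A divisor $d\mid pn$ with $d\nmid n$ satisfies $\vp(d)=\vp(n)+1\ge1$, so $d^4\equiv0\pmod{p^4}$.
\end{itemize}
Hence $\Lambda_pE_{5,\chi_0,\chi_3}\equiv E_{5,\chi_0,\chi_3}\pmod{p^4}$ with no condition on $p$. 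This is the usual statement that the Hecke eigenvalue is $1+\chi_3(p)p^4$.

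\emph{Step 3: the piece $E_{5,\chi_3,\chi_0}$.} The $n$-th coefficient of $\Lambda_pE_{5,\chi_3,\chi_0}$ is $\sum_{d\mid pn}\chi_3(pn/d)d^4$. Again split the divisors.
\begin{itemize}[leftmargin=2em]
\item The divisors $d\nmid n$ are divisible by $p$ and so vanish modulo $p^4$.
\item For $d\mid n$ one has $\chi_3(pn/d)=\chi_3(p)\chi_3(n/d)$.
\end{itemize}
So $\Lambda_pE_{5,\chi_3,\chi_0}\equiv\chi_3(p)E_{5,\chi_3,\chi_0}\pmod{p^4}$. This step is the only place the split hypothesis enters: when $\chi_3(p)=1$ the twist disappears.

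Combining Steps 2 and 3 gives $\Lambda_pC_{\mathrm{mix}}\equiv C_{\mathrm{mix}}\pmod{p^4}$, and Step 1 then yields $M_{m,p}\equiv A_m^{\mathrm{mix}}\pmod{p^4}$. The only point requiring care is Step 1's manipulation $\Lambda_p(f\sigma_ph)=(\Lambda_pf)h$ together with the integrality of $H_{\mathrm{mix}}$; I do not expect a genuine obstacle. The substantive difficulty of the main theorem lies not in this proposition but in the correction terms $C_{\mathrm{mix}}U_p^\ell$, which measure the gap between $H_{\mathrm{mix}}(q)^{mp}$ and $H_{\mathrm{mix}}(q^p)^m$.
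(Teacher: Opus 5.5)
Your proof is correct and is essentially the paper's argument. The paper writes $M_{m,p}=\sum_j c^{\mathrm{mix}}_{(m-j)p}h^{(m)}_j$, which is exactly your identity $M_{m,p}=[q^m](\Lambda_pC_{\mathrm{mix}})H_{\mathrm{mix}}^m$, and then invokes $c^{\mathrm{mix}}_{kp}\equiv c^{\mathrm{mix}}_k\pmod{p^4}$ from \Cref{thm:split-tower}; the only difference is that the paper derives that congruence from multiplicativity of $s$ and $\beta$ and their values at prime powers, whereas you verify it directly by splitting the divisors of $pn$, which is equally valid.
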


\begin{proof}
Write $H_{\mathrm{mix}}^m=\sum h_j^{(m)}q^j$. Then $M_{m,p}=\sum_{j=0}^m c_{(m-j)p}^{\mathrm{mix}}h_j^{(m)}$. The split-prime Eisenstein tower (\Cref{thm:split-tower}) gives $c_{kp}^{\mathrm{mix}}\equiv c_k^{\mathrm{mix}}\pmod{p^4}$ for $k\ge 0$, hence $M_{m,p}\equiv [q^m]C_{\mathrm{mix}}H_{\mathrm{mix}}^m=A_m^{\mathrm{mix}}\pmod{p^4}$.
\end{proof}

\begin{definition}\label{def:Up}
$U_p(q):=\log(t(q)^p/t(q^p))$. Also $V_p(q):=\log(u(q)^p/u(q^p))$, $W_p(q):=\log(g(q)^p/g(q^p))$ where $g=1+27u$. Then $U_p=V_p-2W_p$.
\end{definition}

\begin{lemma}[Exponential layers]\label{lem:exp-layers}
Let $p\ge 7$. Then $U_p\in pq\Z_{(p)}[[q]]$ and the following hold.

\emph{(i) Tail bound for all $k\ge 1$:} for every integer $k\ge 1$,
\[
v_p\!\left(\frac{U_p^k}{k!}\right)\ge k-v_p(k!)\ge k-\frac{k-1}{p-1}.
\]
In particular for $p\ge 7$ and $k\ge 4$, $v_p(U_p^k/k!)\ge 4$. Equivalently, $e^{-XU_p}\bmod X^4$ has integral coefficients, and the omitted tail $\sum_{k\ge 4}(-X)^kU_p^k/k!$ lies in $p^4\,\Z_{(p)}[[q]][[X]]$.

\emph{(ii) Truncated identity:} for the formal variable $X$, modulo $p^4$,
\[
e^{-XU_p}\equiv 1+\sum_{\ell=1}^3\frac{(-X)^\ell}{\ell!}U_p^\ell\pmod{(p^4,X^4)}\quad\text{and}\quad e^{XU_p}\equiv 1+\sum_{\ell=1}^3\frac{X^\ell}{\ell!}U_p^\ell\pmod{(p^4,X^4)}.
\]

\emph{(iii) Frobenius identity for $H_{\mathrm{mix}}$:} in $\Q_p[[q]][[X]]$,
\[
H_{\mathrm{mix}}(q)^{pX}=H_{\mathrm{mix}}(q^p)^X\cdot e^{-XU_p(q)},
\]
where both sides are interpreted as formal exponentials.

\emph{(iv) Specialization at $X=m\in\Z_{\ge 1}$:} both sides of (iii) are honest power series in $q$ with $\Z_{(p)}$-coefficients, and modulo $p^4$,
\[
H_{\mathrm{mix}}(q)^{mp}\equiv H_{\mathrm{mix}}(q^p)^m\sum_{\ell=0}^3\frac{(-m)^\ell}{\ell!}U_p^\ell\pmod{p^4}.
\]
\end{lemma}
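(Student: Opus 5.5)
First I will establish that $U_p\in pq\Z_{(p)}[[q]]$. Since $t=q\cdot H_{\mathrm{mix}}^{-1}$ and $H_{\mathrm{mix}}\in 1+q\Z[[q]]$ by \Cref{thm:modular}(iv), we have
\[
t(q)^p/t(q^p)=H_{\mathrm{mix}}(q^p)/H_{\mathrm{mix}}(q)^p .
\]
Write $H_{\mathrm{mix}}=1+h$ with $h\in q\Z[[q]]$. Then $(1+h)^p\equiv 1+h(q^p)\pmod{p}$, because $(1+h)^p\equiv 1+h^p$ and $h^p\equiv h(q^p)$ modulo $p$ for integral series. Hence the ratio lies in $1+pq\Z_{(p)}[[q]]$. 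For $x\in pq\Z_{(p)}[[q]]$, the logarithm $\log(1+x)=\sum (-1)^{k-1}x^k/k$ converges $q$-adically, and each term satisfies $v_p(x^k/k)\ge k-v_p(k)\ge 1$. The $q$-order of each term is at least $1$. So $U_p\in pq\Z_{(p)}[[q]]$. The same argument applies to $V_p$ and $W_p$, although the lemma does not require them.

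For part (i), the bound $v_p(U_p^k)\ge k$ gives $v_p(U_p^k/k!)\ge k-v_p(k!)$. Legendre's formula then gives $v_p(k!)=(k-s_p(k))/(p-1)\le (k-1)/(p-1)$. For $k\ge4$ and $p\ge7$, I check that $k-(k-1)/(p-1)\ge 4$. This quantity is increasing in $k$, since $1-1/(p-1)>0$. At $k=4$ it equals $4-3/(p-1)\ge 3.5$, and since valuations are integers this already gives $\ge 4$ at $k=4$. For $k\ge5$ we get $5-4/6>4$. Therefore the tail $\sum_{k\ge4}(\mp X)^kU_p^k/k!$ has all its coefficients in $p^4\Z_{(p)}[[q]]$. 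This gives both (i) and (ii). Each coefficient of a fixed $q^n$ involves only finitely many $k$, because $U_p^k\in q^k(\cdot)$, so there are no convergence issues.

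For part (iii), I will work in $\Q_p[[q]][[X]]$ and define $H^{X}:=\exp(X\log H)$ for $H\in 1+q\Q_p[[q]]$. Then
\[
pX\log H_{\mathrm{mix}}(q)-X\log H_{\mathrm{mix}}(q^p)=-X\log\bigl(t^p/\sigma_pt\bigr)=-XU_p ,
\]
using $\log(\sigma_pH)=\sigma_p\log H$ and the additivity of $\log$ on $1+q\Q_p[[q]]$. Exponentiating gives (iii).

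For part (iv), specializing $X=m$ is legitimate because for each fixed $q$-degree only finitely many powers of $X$ contribute. At $X=m$ the formal powers agree with the honest powers $H^m$, which lie in $\Z[[q]]$. Substituting (i)–(ii) with $X=m$ gives the result: the tail is $\sum_{k\ge4}(-m)^kU_p^k/k!$, and each of its terms is integral with valuation at least $4$, since $m^k$ is an integer. Multiplying by the integral series $H_{\mathrm{mix}}(q^p)^m$ preserves the congruence modulo $p^4$.

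The only delicate step is the initial claim $U_p\equiv0\pmod p$. This rests on the Frobenius congruence $H(q)^p\equiv H(q^p)\pmod p$, together with the $p$-adic integrality of $\log$ on $1+p(\cdot)$. Everything else is bookkeeping.
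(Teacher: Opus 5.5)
Your proof is correct and follows the same route as the paper: the mod-$p$ Frobenius congruence gives $U_p\in pq\Z_{(p)}[[q]]$, Legendre's bound controls the tail, and the formal identity $H_{\mathrm{mix}}^{pX}=\sigma_p(H_{\mathrm{mix}})^Xe^{-XU_p}$ is specialized at $X=m$. Your version is slightly more careful in two spots: you run the Frobenius congruence on the unit $H_{\mathrm{mix}}\in 1+q\Z[[q]]$ rather than on $t$, which has no constant term, and you justify $\log\bigl(1+pq\Z_{(p)}[[q]]\bigr)\subset pq\Z_{(p)}[[q]]$ explicitly via $k-v_p(k)\ge 1$.
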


\begin{proof}
Coefficient-wise Frobenius gives $t(q)^p\equiv t(q^p)\pmod p$, hence $t(q)^p/t(q^p)\in 1+pq\Z_{(p)}[[q]]$ and its $\log$ lies in $pq\Z_{(p)}[[q]]$, proving the first claim.

For (i): from $U_p\in pq\Z_{(p)}[[q]]$, $U_p^k\in p^kq^k\Z_{(p)}[[q]]$. Legendre's formula gives $v_p(k!)=\sum_{i\ge 1}\lfloor k/p^i\rfloor\le k/(p-1)$ (with strict inequality $\le (k-1)/(p-1)$ when $k\ge 1$). Hence $v_p(U_p^k/k!)\ge k-(k-1)/(p-1)$. For $p\ge 7$ and $k\ge 4$: at $k=4$, $v_p(4!)=v_p(24)=0$ for $p\ge 5$, so $v_p(U_p^4/4!)\ge 4$. For $k\ge 5$ and $p\ge 7$, $(k-1)/(p-1)\le (k-1)/6$, so $v_p(U_p^k/k!)\ge k-(k-1)/6=(5k+1)/6\ge 26/6>4$. The integrality and divisibility of the omitted tail follows.

For (ii): in $\Q_p[[q]][[X]]$, $e^{-XU_p}=\sum_{k\ge 0}(-X)^kU_p^k/k!$; truncating to $k\le 3$ and using (i) gives the displayed congruences. The $e^{XU_p}$ version is identical.

For (iii): by definition of $U_p$, $t(q)^p/t(q^p)=e^{U_p(q)}$, hence $H_{\mathrm{mix}}(q^p)=q^p/t(q^p)$ and $H_{\mathrm{mix}}(q)^p=q^p/t(q)^p$ satisfy $H_{\mathrm{mix}}(q^p)/H_{\mathrm{mix}}(q)^p=t(q)^p/t(q^p)=e^{U_p(q)}$. Therefore $H_{\mathrm{mix}}(q^p)=H_{\mathrm{mix}}(q)^p\cdot e^{U_p(q)}$, and raising to the $X$-th power gives $H_{\mathrm{mix}}(q^p)^X=H_{\mathrm{mix}}(q)^{pX}\cdot e^{XU_p(q)}$, equivalent to the displayed identity.

For (iv): specialize $X=m\in\Z_{\ge 1}$ in (iii) and apply (ii) to the resulting $e^{-mU_p}$ factor; the omitted $k\ge 4$ tail is $p^4$-divisible.
\end{proof}

\begin{lemma}[$\sigma_p$-linearity of $\Lambda_p$]\label{lem:sigma-p-linearity}
For Laurent series $a,b\in\Z_{(p)}((q))$ with finite principal parts (equivalently, bounded below in $q$-order),
\[
\Lambda_p\bigl(\sigma_p(a)\cdot b\bigr)=a\cdot\Lambda_p(b).
\]
\end{lemma}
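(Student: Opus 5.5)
The identity is a direct coefficient computation, so the plan is to expand both sides and compare coefficients termwise. Write $a=\sum_{i\ge i_0}a_iq^i$ and $b=\sum_{j\ge j_0}b_jq^j$, with $i_0,j_0\in\Z$ bounding the principal parts. Then $\sigma_p(a)=\sum_i a_iq^{pi}$, and the product is
\[
\sigma_p(a)\,b=\sum_{n}\Bigl(\sum_{pi+j=n}a_ib_j\Bigr)q^n .
\]
For each fixed $n$ the inner sum is finite, since $i\ge i_0$, $j\ge j_0$ and $pi+j=n$ leave only finitely many pairs. The product is therefore again a Laurent series with finite principal part, and $\Lambda_p$ is defined on it.

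Next I extract the coefficients at exponents divisible by $p$. Applying $\Lambda_p$ gives
\[
[q^N]\,\Lambda_p\bigl(\sigma_p(a)\,b\bigr)=\sum_{pi+j=pN}a_ib_j .
\]
The constraint $pi+j=pN$ forces $p\mid j$. Writing $j=pk$, it becomes $i+k=N$. So the coefficient equals
\[
\sum_{i+k=N}a_i\,b_{pk}.
\]
This is exactly $[q^N]\bigl(a\cdot\Lambda_p b\bigr)$, since $\Lambda_p b=\sum_k b_{pk}q^k$. The identity follows coefficientwise, and it holds over any coefficient ring; the $\Z_{(p)}$ structure is not used.

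The only point needing care, and it is mild, is the well-definedness of the products in the Laurent setting. This is exactly why the hypothesis of finite principal parts is imposed. With $a$ and $b$ both bounded below in $q$-order, every convolution sum above is finite, so all the rearrangements are legitimate. No other obstacle arises.
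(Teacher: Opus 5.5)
Your proof is correct and is the same coefficient computation the paper uses: both identify the $q^{pN}$-coefficient of $\sigma_p(a)\,b$ as $\sum_i a_i\,b_{p(N-i)}$ and recognize it as the $q^N$-coefficient of $a\cdot\Lambda_p(b)$. Your explicit check that each convolution sum is finite, so the product remains a bounded-below Laurent series, is a small detail the paper leaves implicit.
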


\begin{proof}
Write $b=\sum b_nq^n$ and $a=\sum a_nq^n$, so $\sigma_p(a)=\sum a_nq^{pn}$ and $\sigma_p(a)\cdot b=\sum_{n,k}a_nb_kq^{pn+k}$. The $q^{ps}$-coefficient is $\sum_{n}a_nb_{p(s-n)}=\sum_n a_n\cdot[q^{p(s-n)}]b$, so $\Lambda_p(\sigma_p(a)\cdot b)=\sum_s\bigl(\sum_n a_n\cdot[q^{p(s-n)}]b\bigr)q^s=\sum_s\sum_n a_n[q^{s-n}]\Lambda_p(b)\,q^s=a\cdot\Lambda_p(b)$, where the inner equation uses $[q^{p(s-n)}]b=[q^{s-n}]\Lambda_p(b)$ by definition of $\Lambda_p$.
\end{proof}

\begin{definition}[Frobenius defect and additive defect]\label{def:nu-phi}
Set
\[
\nu_p(q):=\frac{t(q^p)}{t(q)^p}=e^{-U_p(q)},\qquad
\phi_p(q):=\nu_p(q)-1.
\]
By \Cref{lem:exp-layers}, $U_p\in pq\Z_{(p)}[[q]]$, hence $\phi_p\in pq\Z_{(p)}[[q]]$.
\end{definition}

\begin{lemma}[Triangular conversion between $\phi_p$- and $U_p$-layers]\label{lem:phi-U-triangular}
Let $p\ge 7$. Set
\[
\Phi_\ell:=\frac{\phi_p^\ell}{p^\ell},\qquad
\mathcal U_\ell:=\frac{U_p^\ell}{p^\ell}\qquad(\ell=1,2,3).
\]
Then, with row $\ell$ understood modulo $p^{4-\ell}$,
\[
\begin{pmatrix}\Phi_1\\ \Phi_2\\ \Phi_3\end{pmatrix}
\equiv
\begin{pmatrix}-1 & p/2 & -p^2/6\\ 0 & 1 & -p\\ 0 & 0 & -1\end{pmatrix}
\begin{pmatrix}\mathcal U_1\\ \mathcal U_2\\ \mathcal U_3\end{pmatrix}.
\]
The transition matrix is triangular with unit diagonal entries $-1,1,-1$, hence invertible over $\Z_{(p)}$ in the filtered sense. Consequently, for any $\Z_p$-linear maps $\mathcal L_1,\mathcal L_2$ on $q$-series and any single scalar $\lambda\in\Z_p$, the vector congruences
\[
\mathcal L_1(\Phi_\ell)\equiv\lambda\,\mathcal L_2(\Phi_\ell)\pmod{p^{4-\ell}}\qquad(\ell=1,2,3)
\]
are equivalent to
\[
\mathcal L_1(\mathcal U_\ell)\equiv\lambda\,\mathcal L_2(\mathcal U_\ell)\pmod{p^{4-\ell}}\qquad(\ell=1,2,3).
\]
The same triangular conversion also transports membership in any fixed $\Z_p$-submodule of the target.
\end{lemma}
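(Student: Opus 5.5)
The plan is to expand $\phi_p=e^{-U_p}-1$ as a power series in $U_p$ and truncate with the tail bound of \Cref{lem:exp-layers}(i). Write
\[
\phi_p=-U_p+\frac{U_p^2}{2}-\frac{U_p^3}{6}+\sum_{k\ge4}\frac{(-1)^kU_p^k}{k!}.
\]
Raise this to the powers $\ell=1,2,3$, divide by $p^\ell$, and keep only the terms that survive modulo $p^{4-\ell}$. Since $U_p\in pq\Z_{(p)}[[q]]$, we have $\mathcal U_k:=U_p^k/p^k\in\Z_{(p)}[[q]]$. Hence a monomial $U_p^k/p^\ell$ equals $p^{k-\ell}\mathcal U_k$ and has valuation at least $k-\ell$. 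This means it can be dropped modulo $p^{4-\ell}$ as soon as $k\ge4$.

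\textbf{The three rows.}
\begin{itemize}
\item Row $\ell=1$. We get $\Phi_1=-\mathcal U_1+\tfrac p2\mathcal U_2-\tfrac{p^2}{6}\mathcal U_3+\sum_{k\ge4}(-1)^kp^{k-1}\mathcal U_k/k!$. For $p\ge7$ the tail terms have valuation at least $k-1-v_p(k!)\ge3$, by the same Legendre count as in \Cref{lem:exp-layers}(i). So the tail vanishes modulo $p^3$. The integers $2$ and $6$ are units because $p\ge7$.
\item Row $\ell=2$. We get $\phi_p^2=U_p^2-U_p^3+O(U_p^4)$, where the $O(U_p^4)$ collects terms $U_p^k\cdot(\text{unit}/\text{factorials})$ with $k\ge4$. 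Dividing by $p^2$ gives $\Phi_2\equiv\mathcal U_2-p\,\mathcal U_3\pmod{p^2}$. Here the tail has valuation at least $k-2-v_p(\text{denominators})\ge2$.
\item Row $\ell=3$. We get $\phi_p^3=-U_p^3+O(U_p^4)$, so $\Phi_3\equiv-\mathcal U_3\pmod p$.
\end{itemize}
The one point needing care is bounding the denominators in the products of exponential coefficients. The coefficient of $U_p^k$ in $(e^{-U_p}-1)^\ell$ is $(-1)^k\ell!\,S(k,\ell)/k!$, where $S(k,\ell)$ is a Stirling number of the second kind. Its valuation is at least $-v_p(k!)\ge-(k-1)/(p-1)$, so the tail estimate of \Cref{lem:exp-layers}(i) applies verbatim.

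\textbf{Inverting the matrix.} The matrix is upper triangular with diagonal $-1,1,-1$. Back-substitution expresses $\mathcal U_3\equiv-\Phi_3\pmod p$, then $\mathcal U_2\equiv\Phi_2-p\Phi_3\pmod{p^2}$, then $\mathcal U_1$ modulo $p^3$, each as a $\Z_{(p)}$-combination of the $\Phi$'s. The key structural fact is that an entry in row $\ell$, column $k>\ell$, carries the factor $p^{k-\ell}$. A congruence of the $k$-th layer modulo $p^{4-k}$ therefore contributes an error of size $p^{k-\ell}\cdot p^{4-k}=p^{4-\ell}$ to row $\ell$. So the filtered precisions match exactly.

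\textbf{Consequences for linear maps.} Apply $\mathcal L_1-\lambda\mathcal L_2$, which is $\Z_p$-linear, to both sides of the triangular relation. If $\mathcal L_1(\Phi_k)-\lambda\mathcal L_2(\Phi_k)\equiv0\pmod{p^{4-k}}$ for all $k$, then each $\mathcal U_\ell$-combination is $\equiv0\pmod{p^{4-\ell}}$ by the precision matching above. The converse uses the forward matrix in the same way. Membership in a fixed $\Z_p$-submodule transfers by the identical argument, applied modulo $p^{4-\ell}$ times that submodule.

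I expect no real obstacle. The only delicate step is the uniform tail valuation bound, which needs $p\ge7$ so that $v_p(k!)$ stays small relative to $k$ for $k\ge4$.
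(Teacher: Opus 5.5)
Your proposal is correct and follows the paper's argument: expand $\phi_p=e^{-U_p}-1$, discard the $k\ge4$ tail using $v_p(U_p^k/(k!\,p^\ell))\ge 4-\ell$, read off the triangular matrix row by row, and transfer the congruences through the unit-diagonal filtered inversion applied to $\mathcal L_1-\lambda\mathcal L_2$. You also make explicit two points the paper leaves implicit: the Stirling-number bound on the coefficients of $(e^{-U_p}-1)^\ell$, and the fact that the entry in row $\ell$, column $k>\ell$ carries the factor $p^{k-\ell}$, which makes the rowwise precisions $p^{4-\ell}$ match under inversion.
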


\begin{proof}
Since $U_p\in pq\Z_{(p)}[[q]]$, every term $U_p^k/(k!\,p^\ell)$ with $k\ge 4$ has $p$-valuation
\[
v_p\!\left(\frac{U_p^k}{k!\,p^\ell}\right)\ge k-v_p(k!)-\ell.
\]
For $k<p$ we have $v_p(k!)=0$, so the bound becomes $k-\ell\ge 4-\ell$. For $k\ge p$, Legendre's bound $v_p(k!)\le k/(p-1)\le k/6$ gives
\[
k-v_p(k!)\ge 5k/6\ge 4
\]
for $p\ge 7$ and $k\ge 5$; the remaining case $k=4$ has $k<p$. In every case $v_p(U_p^k/(k!\,p^\ell))\ge 4-\ell$. Hence
\[
\phi_p=e^{-U_p}-1\equiv -U_p+\frac{U_p^2}{2}-\frac{U_p^3}{6}\pmod{U_p^4},
\]
\[
\phi_p^2\equiv U_p^2-U_p^3\pmod{U_p^4},\qquad
\phi_p^3\equiv -U_p^3\pmod{U_p^4}.
\]
Dividing the $\ell$-th identity by $p^\ell$ and reading mod $p^{4-\ell}$ gives the displayed matrix. Diagonal entries $-1,1,-1$ are units in $\Z_{(p)}$, so the matrix is invertible.

For the proportionality assertion, apply the displayed triangular matrix to the two vectors
\[
\bigl(\mathcal L_i(\mathcal U_1),\mathcal L_i(\mathcal U_2),\mathcal L_i(\mathcal U_3)\bigr)^t,\qquad i=1,2.
\]
By $\Z_p$-linearity of $\mathcal L_i$, this gives the corresponding $\Phi_\ell$-vector. Hence the residual vector $\mathcal L_1(\Phi_\bullet)-\lambda\mathcal L_2(\Phi_\bullet)$ equals the same triangular matrix applied to $\mathcal L_1(\mathcal U_\bullet)-\lambda\mathcal L_2(\mathcal U_\bullet)$. Since the matrix is invertible with unit diagonal, vanishing in the rowwise moduli $p^{4-\ell}$ is equivalent in the two bases.
\end{proof}

\begin{lemma}[Layer--defect equivalence]\label{lem:layer-defect}
For split $p\ge 5$, set $F_r(q):=\Lambda_p(C_{\mathrm{mix}}/t^{rp})-C_{\mathrm{mix}}/t^r$, $r=1,2,3$. The conditions $F_r\equiv 0\pmod{p^4}$ for $r=1,2,3$ are equivalent to
\[
\Lambda_p(C_{\mathrm{mix}}U_p^\ell)\equiv 0\pmod{p^4},\qquad \ell=1,2,3.
\]
\end{lemma}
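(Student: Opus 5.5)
Write $1/t^{rp}=\sigma_p(t^{-r})\cdot(\sigma_p t/t^p)^{r}=\sigma_p(t^{-r})\,e^{-rU_p}$, using $t(q)^p/t(q^p)=e^{U_p}$ from \Cref{def:Up}. Then \Cref{lem:sigma-p-linearity} gives
\[
\Lambda_p\!\left(\frac{C_{\mathrm{mix}}}{t^{rp}}\right)=\frac{1}{t^{r}}\,\Lambda_p\!\left(C_{\mathrm{mix}}\,e^{-rU_p}\right).
\]
Expanding $e^{-rU_p}$ by \Cref{lem:exp-layers}(ii) with $X=r$ (the tail $k\ge4$ is $p^4$-divisible) yields
\[
F_r\equiv \frac{1}{t^r}\Bigl(\Lambda_p(C_{\mathrm{mix}})-C_{\mathrm{mix}}+\sum_{\ell=1}^{3}\frac{(-r)^\ell}{\ell!}\Lambda_p(C_{\mathrm{mix}}U_p^\ell)\Bigr)\pmod{p^4}.
\]
Multiplying by $t^r$ is invertible on Laurent series with $\Z_{(p)}$ coefficients, since $t=q\cdot(\text{unit})$. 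So $F_r\equiv0$ is equivalent to the vanishing modulo $p^4$ of the bracket $B_r:=D_0+\sum_{\ell=1}^3\frac{(-r)^\ell}{\ell!}L_\ell$, where $D_0:=\Lambda_p(C_{\mathrm{mix}})-C_{\mathrm{mix}}$ and $L_\ell:=\Lambda_p(C_{\mathrm{mix}}U_p^\ell)$.

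Next handle $D_0$. The statement only has three conditions against four unknowns, so I must use the split hypothesis. By the split-prime Eisenstein tower (\Cref{thm:split-tower}), $c^{\mathrm{mix}}_{kp}\equiv c^{\mathrm{mix}}_k\pmod{p^4}$, so $D_0\equiv0\pmod{p^4}$. This is the same input used in \Cref{prop:mainfrobenius}. It reduces the problem to the linear system
\[
\sum_{\ell=1}^3\frac{(-r)^\ell}{\ell!}L_\ell\equiv0\pmod{p^4},\qquad r=1,2,3.
\]

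The direction "$L_\ell\equiv0$ implies $F_r\equiv0$" is then immediate. For the converse, invert the $3\times3$ matrix $\bigl((-r)^\ell/\ell!\bigr)_{r,\ell}$. Up to the diagonal scalings $\mathrm{diag}(-1,1/2,-1/6)$ on the columns, this is the Vandermonde matrix in $r=1,2,3$, with determinant $(2-1)(3-1)(3-2)=2$. The full determinant is therefore $2\cdot(-1)(1/2)(-1/6)=1/6$, a $p$-adic unit for $p\ge5$. Hence the system forces each $L_\ell\equiv0\pmod{p^4}$.

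The main point needing care is the first reduction. The identity $1/t^{rp}=\sigma_p(t^{-r})e^{-rU_p}$ requires checking that the exponential expansion and the truncation modulo $p^4$ are legitimate for Laurent inputs; this holds because $U_p\in pq\Z_{(p)}[[q]]$ and the principal parts are finite. The other essential step is invoking the split tower to kill $D_0$. Without it, the three conditions $F_r$ would not determine the three $L_\ell$.
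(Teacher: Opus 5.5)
Your proof is correct and follows the same route as the paper: write $1/t^{rp}=\sigma_p(t^{-r})e^{-rU_p}$ and use $\sigma_p$-linearity to pull out $t^{-r}$, truncate $e^{-rU_p}$ after $\ell=3$, remove $\Lambda_p(C_{\mathrm{mix}})-C_{\mathrm{mix}}$ with the split-prime Eisenstein tower, and invert the $3\times 3$ matrix $\bigl((-r)^\ell/\ell!\bigr)$ (the paper leaves the $\sigma_p$-linearity and $t^r$-multiplication steps implicit). One harmless slip: $(r^\ell)_{r,\ell=1}^3$ is $\mathrm{diag}(1,2,3)$ times the Vandermonde matrix, so its determinant is $12$ rather than $2$, and the full determinant is $12\cdot(-1)\cdot\tfrac12\cdot(-\tfrac16)=1$, as the paper states, not $1/6$; either value is a $p$-adic unit, so your conclusion stands.
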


\begin{proof}
$t(q^p)^r/t(q)^{rp}=e^{-rU_p}$ and $U_p^4\equiv 0\pmod{p^4}$, so $F_r\equiv 0\pmod{p^4}$ becomes
\[
\sum_{\ell=1}^3\frac{(-r)^\ell}{\ell!}\Lambda_p(C_{\mathrm{mix}}U_p^\ell)\equiv 0\pmod{p^4},\qquad r=1,2,3,
\]
after using $\Lambda_p(C_{\mathrm{mix}})\equiv C_{\mathrm{mix}}\pmod{p^4}$ from the split-prime Eisenstein tower. The matrix $\bigl((-r)^\ell/\ell!\bigr)_{1\le r,\ell\le 3}$ has determinant equal to $1$, giving the equivalence.
\end{proof}

\begin{theorem}[Katz--Dwork reduction]\label{thm:KD-input}
For split $p\ge 7$, suppose that
\[
\Lambda_p(C_{\mathrm{mix}}U_p^\ell)\equiv 0\pmod{p^4},\qquad \ell=1,2,3.
\]
Then, for every $m\ge 1$,
\[
\Lambda_p(C_{\mathrm{mix}}H_{\mathrm{mix}}^{pm})\equiv C_{\mathrm{mix}}H_{\mathrm{mix}}^m\pmod{p^4}.
\]
Consequently,
\[
A_{mp}^{\mathrm{mix}}\equiv A_m^{\mathrm{mix}}\pmod{p^4}\qquad(m\ge 1).
\]
\end{theorem}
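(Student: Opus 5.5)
We argue directly from \Cref{lem:exp-layers}(iv) together with \Cref{lem:sigma-p-linearity}. Fix $m\ge1$. By \Cref{lem:exp-layers}(iv),
\[
C_{\mathrm{mix}}H_{\mathrm{mix}}(q)^{mp}\equiv \sigma_p\bigl(H_{\mathrm{mix}}^m\bigr)\cdot C_{\mathrm{mix}}\sum_{\ell=0}^3\frac{(-m)^\ell}{\ell!}U_p^\ell\pmod{p^4}.
\]
All factors lie in $\Z_{(p)}[[q]]$, and $\ell!$ is a unit for $\ell\le3<p$. The operator $\Lambda_p$ is $\Z_{(p)}$-linear and coefficientwise, so it preserves congruences modulo $p^4$. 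Applying $\Lambda_p$ and using \Cref{lem:sigma-p-linearity} with $a=H_{\mathrm{mix}}^m$ gives
\[
\Lambda_p\bigl(C_{\mathrm{mix}}H_{\mathrm{mix}}^{mp}\bigr)\equiv H_{\mathrm{mix}}^m\Bigl(\Lambda_p(C_{\mathrm{mix}})+\sum_{\ell=1}^3\frac{(-m)^\ell}{\ell!}\Lambda_p\bigl(C_{\mathrm{mix}}U_p^\ell\bigr)\Bigr)\pmod{p^4}.
\]
By hypothesis the three terms with $\ell\ge1$ vanish modulo $p^4$. Since $H_{\mathrm{mix}}^m$ is $p$-integral, multiplying by it preserves the congruence.

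For the $\ell=0$ term we need $\Lambda_p(C_{\mathrm{mix}})\equiv C_{\mathrm{mix}}\pmod{p^4}$. This is the split-prime Eisenstein tower (\Cref{thm:split-tower}), already invoked in \Cref{prop:mainfrobenius} and \Cref{lem:layer-defect}. Concretely, $C_{\mathrm{mix}}=3E_{5,\chi_0,\chi_3}-27E_{5,\chi_3,\chi_0}$, and its coefficients are divisor sums. For $\chi_3(p)=1$ the Hecke eigenvalues are $1+p^4$ on $E_{5,\chi_0,\chi_3}$ and $1+p^4$ on $E_{5,\chi_3,\chi_0}$. Hence $\Lambda_p E=(1+p^4)E-p^4\sigma_pE\equiv E\pmod{p^4}$ on each piece, because $\chi(p)=\psi(p)=1$. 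If \Cref{thm:split-tower} is available as stated, this step is a citation. The only care needed is the constant term $c_0=1/3$, which is $p$-integral since $p\ne3$. This yields the first claimed congruence.

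For the coefficient statement, apply \Cref{thm:LB} at index $mp$:
\[
A_{mp}^{\mathrm{mix}}=[q^{mp}]C_{\mathrm{mix}}H_{\mathrm{mix}}^{mp}=[q^m]\Lambda_p\bigl(C_{\mathrm{mix}}H_{\mathrm{mix}}^{mp}\bigr).
\]
By the congruence just proved, this is $\equiv[q^m]C_{\mathrm{mix}}H_{\mathrm{mix}}^m=A_m^{\mathrm{mix}}\pmod{p^4}$.

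No step is a genuine obstacle; the substantive input is the hypothesis itself. The only delicate point is integrality. In \Cref{lem:exp-layers}(iv) the truncation error $\sum_{k\ge4}(-m)^kU_p^k/k!$ must be $p^4$-divisible after multiplication by the integral series $C_{\mathrm{mix}}\sigma_p(H_{\mathrm{mix}}^m)$. This holds because the tail bound in \Cref{lem:exp-layers}(i) is uniform in $k$ and $m$ is an integer. The other requirement is that $\Lambda_p$ acts on honest power series, which holds since everything here lies in $\Z_{(p)}[[q]]$.
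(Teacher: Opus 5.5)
Your proposal is correct and follows essentially the same route as the paper: specialize the Frobenius identity for $H_{\mathrm{mix}}$ at $X=m$, pull $\sigma_p(H_{\mathrm{mix}}^m)$ out of $\Lambda_p$ by $\sigma_p$-linearity, kill the $\ell=1,2,3$ layers by hypothesis and the $\ell=0$ layer by the split-prime Eisenstein tower, then extract $[q^m]$ via Lagrange--B\"urmann. The only differences are cosmetic: you truncate $e^{-mU_p}$ before applying $\Lambda_p$ rather than after, and you re-derive $\Lambda_p(C_{\mathrm{mix}})\equiv C_{\mathrm{mix}}\pmod{p^4}$ from the common Hecke eigenvalue $1+p^4$ instead of only citing the tower.
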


\begin{proof}
Fix $m\in\Z_{\ge 1}$. By \Cref{lem:exp-layers}(iii) specialized at $X=m$,
\[
H_{\mathrm{mix}}(q)^{pm}=H_{\mathrm{mix}}(q^p)^m e^{-mU_p(q)}.
\]
Since $H_{\mathrm{mix}}(q^p)^m=\sigma_p(H_{\mathrm{mix}}(q)^m)$, \Cref{lem:sigma-p-linearity} gives
\[
\Lambda_p(C_{\mathrm{mix}}H_{\mathrm{mix}}^{pm})
=H_{\mathrm{mix}}^m\Lambda_p(C_{\mathrm{mix}}e^{-mU_p}).
\]
By \Cref{lem:exp-layers}(i),
\[
e^{-mU_p}\equiv 1+\sum_{\ell=1}^3\frac{(-m)^\ell}{\ell!}U_p^\ell\pmod{p^4},
\]
because the omitted tail has coefficients in $p^4\Z_{(p)}[[q]]$ and $m\in\Z$. Applying $\Lambda_p$ and using the hypotheses together with
$\Lambda_p(C_{\mathrm{mix}})\equiv C_{\mathrm{mix}}\pmod{p^4}$ from the split-prime Eisenstein tower \Cref{thm:split-tower}, we obtain
\[
\Lambda_p(C_{\mathrm{mix}}e^{-mU_p})\equiv C_{\mathrm{mix}}\pmod{p^4}.
\]
Multiplication by the integral series $H_{\mathrm{mix}}^m\in\Z_{(p)}[[q]]$ gives the scalar Katz--Dwork congruence.

Extracting the coefficient of $q^m$ from both sides gives
\[
[q^m]\Lambda_p(C_{\mathrm{mix}}H_{\mathrm{mix}}^{pm})
=[q^{mp}]C_{\mathrm{mix}}H_{\mathrm{mix}}^{pm}=A_{mp}^{\mathrm{mix}},
\]
while
\[
[q^m]C_{\mathrm{mix}}H_{\mathrm{mix}}^m=A_m^{\mathrm{mix}}
\]
by \Cref{thm:LB}. This proves the coefficient congruence.
\end{proof}

\section{Witt--Cartier closure}\label{sec:WC}

\subsection{Stack-local descent at \texorpdfstring{$P_-$}{P-}}\label{subsec:stack-local-descent}\label{subsec:auxiliary-descent}

Throughout this section $p\ge 7$ is a split prime, $p\equiv 1\pmod 3$. Let $\mathcal X_0(3)$ denote the effective rigidification of the modular stack of generalized elliptic curves with $\Gamma_0(3)$-level structure, obtained from the Deligne--Mumford stack of \cite{DR} by rigidifying the generic central subgroup $\{\pm 1\}$. The elliptic point $P_-$ over $j=0$ has effective stabilizer $\mu_3$. On the unrigidified stack one obtains the same effective formal neighborhood together with a central $\mu_2$-gerbe; this generic $\mu_2$ acts trivially on the weight-$5$, $\chi_3$ line bundle since $\chi_3(-1)(-1)^5=1$. We work after choosing a primitive cube root of unity $\zeta_3$; for split primes $p\equiv 1\pmod 3$ this is already defined over $\Z_p$.

We use $x=27u$, $g=1+x$, $t=u/g^2$. The Fricke involution is $w_3(x)=x^{-1}$, $w_3(u)=1/(729u)$.

\begin{lemma}[Theta representation of $C_0$]\label{lem:C0-theta-representation}
One has
\[
C_0=\mathcal A^2\mathcal B^3.
\]
\end{lemma}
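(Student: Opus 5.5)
This is the same identity as \Cref{lem:prelim-theta}, so I will reuse that proof and add enough to make it independent of the citation's normalization. The first step is to invoke Moy's level-$3$ differential identities \cite{Moy}:
\[
q\frac{\dd}{\dd q}\log u=\mathcal A^2,\qquad C_0=\mathcal B^3\,q\frac{\dd}{\dd q}\log u .
\]
Multiplying them gives $C_0=\mathcal A^2\mathcal B^3$ at once. I would also check this against the proof of \Cref{thm:modular}(iii). There $C_0=F_0\cdot(q/u)(\dd u/\dd q)$ with $F_0=\eta(\tau)^9/\eta(3\tau)^3$, and by \Cref{thm:modular}(ii),
\[
\mathcal A^3=(1+27u)F_0=\frac{\mathcal A^3}{\mathcal B^3}\,F_0 ,
\]
so $F_0=\mathcal B^3$. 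Hence the two descriptions of $C_0$ agree.

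To avoid depending on the exact form of Moy's statement, I would then verify the identity directly with the Sturm-bound argument of \Cref{thm:modular}. First, $\mathcal A\in M_1(\Gamma_0(3),\chi_3)$ is the theta series of the $A_2$ lattice, and $\mathcal B^3=\eta(\tau)^9/\eta(3\tau)^3$ lies in $M_3(\Gamma_0(3),\chi_3)$. I would confirm the latter by Ligozat's criteria: the weight is $3$, and the orders at the cusps are $0$ at $\infty$ and $(9\cdot 3-3)/24=1$ at $0$, so the product is holomorphic. Its character is $\chi_3$, since $(-3)^{3}$ up to squares gives $\chi_{-3}=\chi_3$. Therefore $\mathcal A^2\mathcal B^3\in M_5(\Gamma_0(3),\chi_3)$, because $\chi_3^2\chi_3=\chi_3$.

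Next, by \Cref{thm:modular} this space has dimension $2$, and the Sturm bound is $1$, so a form in it is determined by its coefficients $a_0$ and $a_1$. For $\mathcal A^2\mathcal B^3$ I compute $\mathcal A=1+6q+\cdots$ and $\mathcal B=1-3q+\cdots$, so $\mathcal B^3=1-9q+\cdots$. This gives
\[
\mathcal A^2\mathcal B^3=1+(12-9)q+\cdots=1+3q+\cdots .
\]
On the other side, $C_0=3E_{5,\chi_0,\chi_3}=1+3q+\cdots$. The coefficients agree through $q^1$, so the two forms are equal.

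The only place that needs care is the character and holomorphy check for the eta quotient $\mathcal B^3$, in particular getting the sign of the quadratic character right; this is routine by Ligozat. Once that is settled, the coefficient comparison closes the argument.
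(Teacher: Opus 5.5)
Your proposal is correct. Its first step is exactly the paper's proof. The paper proves this lemma by pointing to \Cref{lem:prelim-theta}, which multiplies Moy's two identities $C_0=\mathcal B^3\,q\frac{d}{dq}\log u$ and $q\frac{d}{dq}\log u=\mathcal A^2$.

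Your second step is a genuinely different, self-contained route. You show $\mathcal A^2\mathcal B^3\in M_5(\Gamma_0(3),\chi_3)$ using the $A_2$ theta series and Ligozat for $\eta(\tau)^9/\eta(3\tau)^3$. You did not state Ligozat's congruence conditions, but they hold: $\sum\delta r_\delta=0$ and $\sum(3/\delta)r_\delta=24$. You then match $a_0=1$, $a_1=3$ against $C_0=1+3q+\cdots$, and conclude because the difference would vanish to order $2>5\cdot 4/12$ at $\infty$.

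This mirrors the paper's own Sturm-bound identification of $uC_0$ in \Cref{thm:modular}(iii). It also removes any dependence on how Moy normalizes $C_0$; in particular it relies on the constant term $L(-4,\chi_3)/2=1/3$ of $E_{5,\chi_0,\chi_3}$, which is what makes the constant terms match. The cost is that you must import three modularity facts: for the $A_2$ theta series, Ligozat's criterion for the eta quotient, and for $E_{5,\chi_0,\chi_3}$. The paper's route outsources everything to one citation.

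Two small remarks.
\begin{enumerate}
\item Only the Sturm bound is needed, not $\dim M_5=2$. Citing the dimension is harmless, though, since that count in \Cref{thm:modular} does not use this lemma.
\item Your cross-check against \Cref{thm:modular}(iii) is not independent evidence. The equality $F_0=\mathcal B^3$ holds directly by the definition $\mathcal B=\eta(\tau)^3/\eta(3\tau)$, and the formula $C_0=F_0\,(q/u)(du/dq)$ rests on the same Moy input. The Sturm-bound argument is what actually makes your proof independent of that citation.
\end{enumerate}
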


\begin{proof}
This is \Cref{lem:prelim-theta}.
\end{proof}

\begin{lemma}[Unit leading coefficients at $P_-$]\label{lem:Pminus-unit-leading}\label{lem:stack-P-minus}
Let $R=\Z_p$ with $p\ge 7$ split, and work after adjoining $\zeta_3$ if necessary. In the quotient-stack chart
\[
\widehat{\mathcal X}_{0(3),P_-}\simeq \bigl[\operatorname{Spf}R[[r]]/\mu_3\bigr],
\qquad \zeta_3\cdot r=\zeta_3 r,
\]
one may choose $r$ so that
\[
g=1+27u=r^3.
\]
For a $\mu_3$-equivariant local generator $e$ of $\omega^5(\chi_3)$, one has
\[
C_0=r^2c(r^3)e,\qquad uC_0=r^2c_u(r^3)e,
\]
with
\[
c(0),\,c_u(0)\in R^\times.
\]
Consequently $\ord^{\rm st}_{P_-}(g)=3$, $\ord^{\rm st}_{P_-}(C_0)=2$, $\ord^{\rm st}_{P_-}(uC_0)=2$, and the reductions of these local descriptions modulo $p^N$ remain valid for every $N\ge 1$.
\end{lemma}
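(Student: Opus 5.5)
The plan is to build the chart directly from the theta data of \Cref{lem:prelim-divisor-chart}. We promote the coarse computation there to the stack. Near $P_-$ the functions $\mathcal B$ and $\mathcal D$ are units, and $g=\mathcal A^3/\mathcal B^3$. So $g$ vanishes at $P_-$ to stack order $3$ exactly when $\mathcal A$, viewed as a section of $\omega^1$ (with $\chi_3$-twist), vanishes simply at the $j=0$ point of the stack chart. We therefore set $r:=\mathcal A/\mathcal B$. This is a ratio of two weight-one sections, hence a genuine function on the $\mu_3$-cover $\operatorname{Spf}R[[r]]$, and $g=r^3$ holds tautologically.

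Two things must be checked. First, $r$ is a uniformizer of the cover. Second, $\mu_3$ acts on it by the standard character $\zeta_3\cdot r=\zeta_3 r$. For the first, I would use that at a $j=0$ point the weight-one forms at level $3$ have tangent data forced by the automorphism. The automorphism $\zeta\in\mu_3\subset\operatorname{Aut}(E)$ preserves the canonical subgroup (cf.\ \Cref{lem:mu3-preserves-canonical-subgroup}, to be invoked). It acts on the invariant differential by $\zeta$ and on the deformation parameter by $\zeta^{2}$ (Kodaira--Spencer: $\omega^{\otimes 2}\cong\Omega^1$). Hence any $\mu_3$-anti-invariant weight-one section that vanishes at the point must vanish to order exactly $\equiv 1\pmod 3$. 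The order of $\mathcal A$ is not $\ge 4$, since then $g$ would have coarse order $\ge 2$, contradicting the simple zero of $1+27u$ on $\mathbb P^1_u$. Replacing the generator $\zeta_3$ by $\zeta_3^{-1}$ if necessary normalizes the action to $\zeta_3 r$. Because $p\ge7$, the orders $3$ and the constants $27$ are prime to $p$, so all of this works over $R=\Z_p$ (with $\zeta_3\in\Z_p$ for split $p$).

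For the sections, write $C_0=\mathcal A^2\mathcal B^3=r^2\mathcal B^5$ by \Cref{lem:C0-theta-representation}. We choose $e:=\mathcal B^5$ as the local generator of $\omega^5(\chi_3)$; it is a unit section because $\mathcal B(P_-)\neq0$. We then verify that it is $\mu_3$-equivariant, i.e.\ that $\mu_3$ acts on $e$ by the character that makes $r^2e$ invariant, which is consistent with $C_0$ descending. This gives $C_0=r^2c(r^3)e$ with $c\equiv1$. The invariant part must be a function of $r^3$, since any $\mu_3$-invariant function on $R[[r]]$ lies in $R[[r^3]]$. For $uC_0$ we multiply by $u=-\tfrac1{27}+\tfrac{r^3}{27}$, so $c_u(s)=(s-1)c(s)/27$ and $c_u(0)=-1/27\in R^\times$. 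The orders $3,2,2$ are then read off directly. Reduction modulo $p^N$ is immediate, since every identity is an identity in $R[[r]]$ with unit leading coefficients.

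The main obstacle is making the claim ``$\mathcal B$ is a unit and $\mathcal A$ vanishes to order exactly one on the cover'' rigorous integrally at $p$. Concretely, we must show the unit-ness of $\mathcal B$ at the supersingular-free point $j=0$ modulo $p$, for split $p$ where $j=0$ is ordinary. I would handle this via the Borwein identity $\mathcal B^3=\mathcal A^3-\mathcal D^3$. At $P_-$ this reduces to $\mathcal B^3=-\mathcal D^3$, so $\mathcal B$ is a unit iff $\mathcal D$ is. Both are eta quotients with no zeros on the interior of the upper half-plane, and their integral $q$-expansions give unit leading terms at both cusps. Hence their divisors are supported at the cusps, and they are nonvanishing at $P_-$ modulo $p$.
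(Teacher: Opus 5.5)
Your proposal is correct and is essentially the paper's argument. Both rest on the identities $C_0=\mathcal A^2\mathcal B^3$ and $\mathcal A^3=g\,\mathcal B^3$, together with the unit-ness of $\mathcal B^3=\eta(\tau)^9/\eta(3\tau)^3$ at $P_-$, and your $r=\mathcal A/\mathcal B$ is exactly the normalization already used in \Cref{lem:prelim-divisor-chart}. The paper takes $r$ from the tame chart with $g=r^3$ and deduces $\mathcal A=r\,a_0(r^3)$, whereas you define $r$ as the theta ratio, so that $g=r^3$ and $c\equiv 1$ (with $e=\mathcal B^5$) hold by construction.

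One small caveat: $\mathcal B$ by itself is not a section of $\omega(\chi_3)$ on $\mathcal X_0(3)$, since its multiplier on $\Gamma_0(3)$ differs from $\chi_3$ by a nontrivial cubic character and only $\mathcal B^3$ descends. On the cover you should therefore read $\mathcal B$ as a local cube root of the invariant unit section $\mathcal B^3$. Any such root is automatically a $\mu_3$-eigensection, which is precisely the equivariance of $e$ that you say you will verify.
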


\begin{proof}
The function $g=1+27u$ cuts out $P_-$ on the coarse curve $\mathbb P^1_u$, and $dg/du=27\in R^\times$, so $g$ is a coarse local parameter at $P_-$. Since the completed stack is the tame quotient with invariant coordinate $r^3$, after multiplying $r$ by a unit whose cube is the old leading unit we may arrange $g=r^3$; this is allowed because $3\in R^\times$.

We derive the local form $C_0=r^2c(r^3)e$ with $c(0)\in R^\times$ from the Borwein theta identity $\mathcal A^3=\mathcal B^3+\mathcal D^3$ rather than postulating the stabilizer character. From \Cref{thm:modular}(ii), $F_{\mathrm{mix}}=\mathcal A^3=(1+27u)\eta(\tau)^9/\eta(3\tau)^3=g\cdot F_0$, where $F_0=\eta(\tau)^9/\eta(3\tau)^3$ has unit leading coefficient $1$ at $P_-$ (since $\eta(\tau)$ and $\eta(3\tau)$ are nonvanishing on $\Gamma_0(3)$ away from cusps). Therefore $\mathcal A^3=g\cdot F_0=r^3\cdot F_0(0)\cdot(1+O(r))$, so $\mathcal A$ has order exactly $1$ in $r$ at $P_-$ with unit leading coefficient: $\mathcal A=r\cdot a_0(r^3)$ with $a_0(0)\in R^\times$. From $\mathcal D^3/\mathcal B^3=27u$, at $P_-$ this becomes $\mathcal D^3/\mathcal B^3=-1$, so $\mathcal D=-\zeta_3^j\mathcal B$ for some $j$ (in particular $\mathcal D$ is a unit times $\mathcal B$ at $P_-$). Substituting into $\mathcal A^3=\mathcal B^3+\mathcal D^3=\mathcal B^3(1+\mathcal D^3/\mathcal B^3)=\mathcal B^3(1+27u)=\mathcal B^3\cdot g$ gives $\mathcal B^3=\mathcal A^3/g=F_0$, so $\mathcal B^3$ is a unit at $P_-$, hence $\mathcal B$ is itself a unit at $P_-$. By \Cref{lem:C0-theta-representation}, $C_0=\mathcal A^2\mathcal B^3$. Therefore
\[
C_0=\mathcal A^2\cdot\mathcal B^3=r^2a_0(r^3)^2\cdot\mathcal B^3=r^2\cdot c(r^3)\cdot e
\]
where $c(r^3):=a_0(r^3)^2\mathcal B^3$ has unit leading coefficient at $r=0$ (because both $a_0(0)$ and $\mathcal B(0)$ are units), and $e$ is the local generator of $\omega^5(\chi_3)$ rendered $\mu_3$-equivariant by the choice of trivialization. The vanishing order $\ord^{\rm st}_{P_-}(C_0)=2$ follows directly, with the stabilizer character of $r^2$.

Since $u$ is a unit at $P_-$ (with $u(P_-)=-1/27$),
\[
uC_0=r^2\cdot u\,c(r^3)\cdot e,
\]
and $c_u(0):=u(P_-)\cdot c(0)=-c(0)/27\in R^\times$. Units in $R[[s]]$ remain units after reduction modulo $p^N$.
\end{proof}

\begin{lemma}[Quotient-stack chart at $P_-$]\label{lem:quotient-stack-chart}
After choosing $\zeta_3$ and an eigenparameter $r$ for the effective $\mu_3$-stabilizer, there is an isomorphism of completed formal stacks
\[
\widehat{\mathcal X}_{0(3),P_-}\,\widehat\otimes_{\Z_p}\Z_p[\zeta_3]\;\simeq\;\bigl[\operatorname{Spf}\Z_p[\zeta_3][[r]]\,\big/\,\mu_3\bigr],
\]
with $\zeta_3\cdot r=\zeta_3 r$. The invariant coarse coordinate is $s=r^3$.
\end{lemma}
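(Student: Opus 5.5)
The isomorphism is a statement about the deformation theory of a single tame orbifold point, and I will build it by hand from an étale-local presentation. Fix a geometric point $x$ of $\mathcal X_0(3)$ over $P_-$, i.e.\ the $j=0$ curve $E_0$ with its canonical-subgroup $\Gamma_0(3)$-structure $H$. By \Cref{lem:mu3-preserves-canonical-subgroup}, the $\mu_3\subset\mu_6=\operatorname{Aut}(E_0)$ preserves $H$. Since $\{\pm1\}$ has been rigidified away, the effective automorphism group of $x$ is exactly $\mu_3$.

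Deformation theory of the pair $(E_0,H)$ is that of $E_0$ itself, because $3$ is invertible over $\Z_p$ for $p\ge7$ and so $H$ is étale and deforms uniquely. It is therefore formally smooth of relative dimension one. The universal deformation ring is $\mathcal O[[r']]$ with $\mathcal O=W(\overline{\mathbf F}_p)$, or $\Z_p$ after descent, since $E_0$ and $H$ are defined over $\Z_p$ when $p$ is split. The tangent space is $H^1(E_0,T_{E_0})\simeq\omega^{\otimes-2}$.

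By the standard local structure of tame Deligne--Mumford stacks, the completion of $\mathcal X_0(3)$ at $x$ is $[\operatorname{Spf}\mathcal O[[r']]/\operatorname{Aut}(x)]$. Here the automorphism group acts through its natural action on the universal deformation.

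The key step is to linearize this action. The group $\mu_3$ has order prime to $p$, so averaging is available. First choose an eigenvector of the tangent representation over $\Z_p[\zeta_3]$; this is possible because $\mu_3$ is diagonalizable once $\zeta_3$ is adjoined, and for split $p$ it is already in $\Z_p$. Then replace $r'$ by
\[
r:=\tfrac13\sum_{\zeta\in\mu_3}\chi(\zeta)^{-1}\,\zeta^*r',
\]
where $\chi$ is the tangent character. This $r$ has the same linear term as $r'$, so it is still a uniformizer, and it is a $\chi$-eigenvector. The action is therefore $\zeta\cdot r=\chi(\zeta)r$.

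I must show $\chi$ is faithful, with no tacit dependence on the choice. The automorphism $\zeta$ acts on $\omega$ by $\zeta$, so it acts on $\omega^{-2}$ by $\zeta^{-2}=\zeta$. Hence $\chi$ is the identity character, and $\zeta_3\cdot r=\zeta_3r$ as claimed. Equivalently, the action is nontrivial because $P_-$ is a genuine elliptic point, which \Cref{lem:Pminus-unit-leading} already detected through the stack orders. That lemma also shows that $\mathcal A$ is an order-one eigenfunction, and it can serve as a cross-check or as the concrete choice of $r$.

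The invariant ring of $\Z_p[\zeta_3][[r]]$ under $\mu_3$ is $\Z_p[\zeta_3][[r^3]]$, again because $3$ is a unit. So the coarse completion has coordinate $s=r^3$. This matches $g=r^3$ from \Cref{lem:Pminus-unit-leading} after rescaling $r$ by a unit cube root, which exists because $3\in\Z_p^\times$ and Hensel's lemma applies.

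The main obstacle I expect is the descent bookkeeping: passing from the strictly henselian local structure theorem to the completion over $\Z_p$, and making sure the rigidification by $\{\pm1\}$ interacts correctly, so that the stabilizer is $\mu_3$ rather than $\mu_6$. I would handle this by working first over $W(\overline{\mathbf F}_p)$ and then descending along the étale extension using uniqueness of the universal deformation. I would cite \cite{DR} for the local quotient presentation.
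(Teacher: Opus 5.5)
Your proof is correct, but it builds the chart by a different route from the paper's. The paper invokes the Abramovich--Olsson--Vistoli local structure theorem for tame stacks to get an étale-local presentation $[U/\mu_3]$. It then simply asserts that the stabilizer acts effectively on the one-dimensional tangent line, hence through a primitive character, and picks an eigenparameter.

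You instead proceed in three steps:
\begin{itemize}
\item You start from the universal deformation ring of $(E_0,H)$, which is formally smooth because $H$ is étale, and use the standard description of a DM-stack completion as $[\operatorname{Spf}(\text{def.\ ring})/\operatorname{Aut}(x)]$.
\item You make the linearization explicit with the isotypic projector $\tfrac13\sum_\zeta\chi(\zeta)^{-1}\zeta^*$; this is exactly where $3\in\Z_p^\times$ is used.
\item You prove primitivity by computing the character on $H^1(E_0,T_{E_0})\simeq\omega^{\otimes-2}$ as $\zeta\mapsto\zeta^{-2}\neq1$.
\end{itemize}

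This verifies two points the paper leaves tacit. First, an eigen-uniformizer exists at all. Second, the tangent character is nontrivial; the paper's inference from ``effective'' itself needs the linearization you carry out. The same computation shows that $[-1]$ acts trivially on $\omega^{\otimes-2}$, which explains why rigidifying $\{\pm1\}$ does not change the formal neighbourhood. The paper's route is shorter and avoids deformation theory and Kodaira--Spencer. It also avoids the detour through $W(\overline{\mathbf F}_p)$ and descent.

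Three corrections, none affecting the conclusion.

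\begin{itemize}
\item Your reason why $[\zeta_3]$ preserves $H$ is misattributed. \Cref{lem:mu3-preserves-canonical-subgroup} concerns Katz's canonical subgroup $C_p$ of order $p$, which plays no role here, and $H$ should not be called a canonical subgroup. The correct reason is that $H=\ker(1-[\zeta_3])=E_0[1-\zeta_3]$ is the unique $[\zeta_3]$-stable subgroup of order $3$ in $E_0[3]$. The other three order-$3$ subgroups are permuted cyclically and give the other point $u=-1/243$ over $j=0$. In any case this is part of the hypothesis that $P_-$ is the elliptic point.
\item $E_0$ and $H$ are already defined over $\Z$. What the split hypothesis supplies is $\zeta_3\in\Z_p$, so that the automorphisms, and hence the eigen-decomposition, are defined over $\Z_p$.
\item Your closing claim that a unit cube root always exists ``by Hensel'' is too quick. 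When $p\equiv1\pmod3$, the residue of the leading coefficient need not be a cube in $\mathbf F_p^\times$, so forcing $g=r^3$ exactly may require an unramified cubic extension. This is harmless here, since the lemma only asserts that $s=r^3$ is a coarse coordinate.
\end{itemize}
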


\begin{proof}
The stack $\mathcal X_0(3)$ is tame at $P_-$ in the sense of \cite[Theorem~3.2]{AOV}: the effective stabilizer $\mu_3$ is a finite flat linearly reductive group scheme over $\Z_p$ for $p\ne 3$. By \cite[Theorem~3.2]{AOV}, étale-locally on the coarse moduli space at $P_-$, the stack admits a presentation as $[U/\mu_3]$. The stabilizer action on the completed tangent line at $P_-$ is effective and one-dimensional, hence its character is primitive; after choosing an eigenparameter $r$, the action is $r\mapsto\zeta_3 r$. The invariant coordinate is $s=r^3$.
\end{proof}

\begin{lemma}[Stabilizer functoriality of the canonical subgroup]\label{lem:mu3-preserves-canonical-subgroup}
Let $p\ge 7$ be split, $p\equiv 1\pmod 3$, and let $S$ be a $p$-adically complete $\Z_p$-scheme. Let $(E,C_3)/S$ be an ordinary generalized elliptic curve with $\Gamma_0(3)$-structure, and let $C_p(E)\subset E[p]$ be Katz's canonical subgroup. If $\alpha:(E,C_3)\xrightarrow{\sim}(E',C_3')$ is an isomorphism of $\Gamma_0(3)$-objects, then $\alpha(C_p(E))=C_p(E')$. Consequently, if $\alpha$ is an automorphism of $(E,C_3)$, then $\alpha(C_p(E))=C_p(E)$ and $\alpha$ descends to an automorphism $\bar\alpha:E/C_p(E)\xrightarrow{\sim}E/C_p(E)$ for which the quotient isogeny $\pi:E\to E/C_p(E)$ satisfies $\pi\circ\alpha=\bar\alpha\circ\pi$. In particular, at the $j=0$ point $P_-$ of $\mathcal X_0(3)$, the effective stabilizer $\mu_3$ preserves the canonical subgroup.
\end{lemma}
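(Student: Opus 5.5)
The plan is to use the intrinsic characterization of Katz's canonical subgroup and its compatibility with base change and isomorphisms. Over a $p$-adically complete base on which $E$ is ordinary, $C_p(E)$ is defined intrinsically. One description is as the kernel of the reduction map $E[p]\to\widehat E$-side, that is, the connected--étale decomposition: $C_p(E)=\widehat E[p]$ is the $p$-torsion of the formal group, or equivalently the unique finite flat subgroup of order $p$ lifting the kernel of Frobenius modulo $p$. Because this description is functorial in $E$, any isomorphism of elliptic curves $\alpha:E\to E'$ over $S$ carries the formal group $\widehat E$ isomorphically onto $\widehat{E'}$. Hence it carries $\widehat E[p]$ onto $\widehat{E'}[p]$, which gives $\alpha(C_p(E))=C_p(E')$. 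The $\Gamma_0(3)$-structure plays no role here beyond the fact that $\alpha$ is in particular an isomorphism of the underlying curves. Since $p\neq 3$, the subgroup $C_3$ is étale and disjoint from $E[p]$, so nothing interferes. For generalized elliptic curves over cusp neighborhoods, I would use the same statement for the identity component, which is a torus, where the canonical subgroup is $\mu_p$ and is again functorial.

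For the descent statement, I would apply the universal property of the quotient by a finite flat subgroup. If $\alpha(C_p)=C_p$, then $\pi\circ\alpha:E\to E/C_p$ kills $C_p$, so it factors uniquely as $\bar\alpha\circ\pi$. Applying the same reasoning to $\alpha^{-1}$ produces an inverse to $\bar\alpha$, so $\bar\alpha$ is an automorphism. It also respects the induced $\Gamma_0(3)$-structure $\pi(C_3)$, because $\alpha(C_3)=C_3$.

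At $P_-$, I would take $(E,C_3)$ to be the $j=0$ curve with its level structure over $\Z_p$ (or $\Z_p[\zeta_3]$, which equals $\Z_p$ for split $p$). This curve has ordinary reduction, since $p\equiv1\pmod 3$ splits in $\Q(\zeta_3)$. The $\mu_3\subset\mu_6$ automorphisms fix $C_3$: they are exactly the stabilizer of the point in $\mathcal X_0(3)$ by \Cref{lem:quotient-stack-chart}. The general statement then applies to them directly.

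The only point needing care is ordinarity over the whole formal neighborhood, so that $C_p$ is defined on the completed chart. This follows because ordinarity is an open condition and holds at the closed point.
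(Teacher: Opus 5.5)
Your proposal is correct and follows essentially the same route as the paper. Both arguments rest on the same three steps. First, the canonical subgroup is functorial under isomorphisms because it is intrinsically characterized: you use $C_p=\widehat E[p]$, while the paper cites uniqueness in Katz's construction. Second, the quotient's universal property yields $\bar\alpha$. Third, Deuring ordinarity at $j=0$ for $p\equiv 1\pmod 3$ lets the argument apply to $[\zeta_3]$. Your remark that ordinarity persists over the whole formal neighbourhood of $P_-$ is a small extra detail that the paper leaves implicit.
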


\begin{proof}
Katz's canonical subgroup on the ordinary locus is functorial under isomorphisms of elliptic curves \cite[\S 3.10]{Katz-padic}. For the present use only the automorphism case is needed: if $\alpha:E\to E'$ is an isomorphism, then $\alpha(C_p(E))\subset E'[p]$ has the same defining canonical-subgroup property as $C_p(E')$, and by uniqueness in Katz's construction $\alpha(C_p(E))=C_p(E')$. If $E'=E$ and $\alpha$ is an automorphism of $(E,C_3)$, this gives $\alpha(C_p(E))=C_p(E)$, so $\alpha$ induces a unique automorphism $\bar\alpha$ of $E/C_p(E)$ satisfying the displayed commutative diagram. At $P_-$, the effective stabilizer on the rigidified stack is generated by the order-$3$ automorphism $[\zeta_3]$ of the $j=0$ elliptic curve which preserves the chosen cyclic subgroup $C_3$. Since $p\equiv 1\pmod 3$, the $j=0$ curve is ordinary, so $C_p$ is defined there. Applying the preceding paragraph to $\alpha=[\zeta_3]$ gives $[\zeta_3](C_p)=C_p$.
\end{proof}

\begin{lemma}[Descent of Katz Frobenius and equivariance at $P_-$]\label{lem:katz-trace-descent}\label{lem:canonical-frobenius-via-atlas}\label{prop:descent-to-X03}\label{prop:local-frobenius-normal-form}\label{prop:stack-local-form-P-minus}
Fix $M=4$ and put $Y:=\mathcal X_0(3)\times_{\mathcal X(1)}Y_1(4)$. Then $Y$ is a fine modular scheme over $\Z_p$, and $\pi:Y\to\mathcal X_0(3)$ is a representable finite étale atlas over the ordinary locus. Katz's canonical Frobenius lift and normalized weight-$k$ trace operator on $Y^{\rm ord}$ descend to $\mathcal X_0(3)^{\rm ord}$. Moreover, after choosing the quotient-stack chart
\[
\widehat{\mathcal X}_{0(3),P_-}\widehat\otimes_{\Z_p}\Z_p[\zeta_3]\simeq \bigl[\operatorname{Spf}\Z_p[\zeta_3][[r]]/\mu_3\bigr],\qquad \zeta_3\cdot r=\zeta_3 r,
\]
the descended canonical Frobenius lift $\Phi$ is $\mu_3$-equivariant. Equivalently, if $F(r):=\Phi^*r\in\Z_p[\zeta_3][[r]]$, then $F(\zeta_3 r)=\zeta_3 F(r)$, so
\[
F(r)=r\,G(r^3),\qquad G(s)\in\Z_p[\zeta_3][[s]].
\]
Since $\Phi$ reduces to absolute Frobenius modulo $p$, $F(r)\equiv r^p\pmod p$. For split primes $p\equiv 1\pmod 3$,
\begin{equation}\label{eq:Phi-local}
\Phi^*r=r^p+pr\alpha_1(r^3)+p^2r\alpha_2(r^3)+p^3r\alpha_3(r^3)+O(p^4),\qquad \alpha_i\in\Z_p[\zeta_3][[s]].
\end{equation}
\end{lemma}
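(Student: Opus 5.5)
The plan has four steps: build the fine atlas, descend Katz's Frobenius lift and trace operator to $\mathcal X_0(3)^{\rm ord}$, show that the stabilizer $\mu_3$ commutes with the descended lift, and then read off the normal form \eqref{eq:Phi-local}.

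\emph{Step 1: the atlas.} For $M=4$ the moduli problem $\Gamma_1(4)$ is rigid over $\Z[1/4]$. Hence the fiber product $Y=\mathcal X_0(3)\times_{\mathcal X(1)}Y_1(4)$, built from the simultaneous $\Gamma_0(3)$ and $\Gamma_1(4)$ structures, has no nontrivial automorphisms and is a fine scheme over $\Z_p$ for $p\ge7$. The projection $\pi$ forgets the $\Gamma_1(4)$-structure. It is the base change of $Y_1(4)\to\mathcal X(1)$, which over the locus away from the cusps is representable, finite and étale because $4$ is prime to $p$. So $\pi$ is a representable finite étale atlas over the ordinary locus.

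\emph{Step 2: descent.} On $Y^{\rm ord}$, Katz's canonical lift is defined by $(E,C_3,P_4)\mapsto(E/C_p,\ \text{image of }C_3,\ \text{image of }P_4)$. This makes sense since $p\nmid 12$, so the quotient map is injective on the $3$- and $4$-level structures. By \Cref{lem:mu3-preserves-canonical-subgroup}, formation of $E/C_p$ is functorial in isomorphisms of $\Gamma_0(3)$-objects. Therefore the lift commutes with the two projections $Y\times_{\mathcal X_0(3)}Y\rightrightarrows Y$, which only change the $\Gamma_1(4)$-data through automorphisms of the underlying curve. Étale descent of morphisms of stacks then gives $\Phi$ on $\mathcal X_0(3)^{\rm ord}$. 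The same cocycle compatibility, applied to the normalized trace on $\omega^{k}$ (built from $\pi^*$ of the dual isogeny and the trace of the étale pullback), descends the weight-$k$ trace operator.

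\emph{Step 3: equivariance at $P_-$.} The formal neighbourhood is $[\operatorname{Spf}\Z_p[\zeta_3][[r]]/\mu_3]$ by \Cref{lem:quotient-stack-chart}. A morphism of stacks out of this quotient, lifted to the cover, is an endomorphism $F$ of $\Z_p[\zeta_3][[r]]$ intertwining the stabilizer actions up to the automorphism $\bar\alpha$ of \Cref{lem:mu3-preserves-canonical-subgroup}. Since $\Phi(P_-)=P_-$ (the curve $E/C_p$ again has $j=0$ with the induced structure), $\Phi$ induces a homomorphism on effective stabilizers $\mu_3\to\mu_3$, $[\zeta_3]\mapsto[\bar\zeta_3]$. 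Here $\bar\alpha$ for $\alpha=[\zeta_3]$ is again $[\zeta_3]$, because $\pi$ commutes with complex multiplication, so $\pi\circ[\zeta_3]=[\zeta_3]\circ\pi$. Hence this homomorphism is the identity, and the lifted map satisfies $F(\zeta_3r)=\zeta_3F(r)$. I expect this step to be the main obstacle: one must check that the identification of the CM action on $E/C_p$ with the action on $E$ is the identity character and not its square. Here $p\equiv1\pmod3$ matters, since then $\zeta_3^p=\zeta_3$ and the character is compatible with the reduction $F\equiv r^p$. The identity $F(\zeta_3r)=\zeta_3F(r)$ means only monomials $r^{n}$ with $n\equiv1\pmod3$ appear, so $F=rG(r^3)$.

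\emph{Step 4: normal form.} Modulo $p$, $\Phi$ is absolute Frobenius, so $F\equiv r^p\pmod p$. Since $p\equiv1\pmod3$, the monomial $r^p=r\cdot(r^3)^{(p-1)/3}$ is itself of the form $rG(r^3)$, and $F-r^p$ is divisible by $p$ and still $\mu_3$-equivariant. Expanding $F-r^p$ $p$-adically and grouping by powers $p,p^2,p^3$ gives \eqref{eq:Phi-local}. Each coefficient $\alpha_i(r^3)$ lies in $\Z_p[\zeta_3][[s]]$ because equivariance is preserved at each $p$-adic digit.
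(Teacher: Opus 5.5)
Your proposal is correct and follows essentially the same route as the paper. The steps match: the fine $\Gamma_1(4)$-atlas, descent of $(E,C_3,\beta)\mapsto(E/C_p,\bar C_3,\bar\beta)$ via functoriality of the canonical subgroup, $\mu_3$-equivariance at $P_-$ from \Cref{lem:mu3-preserves-canonical-subgroup}, and the normal form read off from $F\equiv r^p\pmod p$ with $r^p=r(r^3)^{(p-1)/3}$.

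There are only two differences. You cite rigidity of $\Gamma_1(4)$ where the paper checks $[-1]$, $[\zeta_3]$, $[i]$ case by case. You also make explicit, via $\Phi(P_-)=P_-$ and the fact that the quotient isogeny commutes with normalized CM, that the induced stabilizer map $\alpha\mapsto\bar\alpha$ is the identity character rather than its square; the paper's descent-groupoid argument leaves this implicit.
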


\begin{proof}
The auxiliary $\Gamma_1(4)$-structure makes $Y$ fine. We verify this by checking that the only automorphism of a generalized elliptic curve $E/\Z_p$ (with $p\ne 2,3$) that fixes a $\Gamma_1(4)$-structure $\beta:\Z/4\hookrightarrow E$ is the identity. The automorphism group of a generalized elliptic curve is at most $\mu_6=\{\pm 1\}\times\mu_3$ (at $j=0$) or $\{\pm 1,\pm i\}=\mu_4$ (at $j=1728$), reducing to $\{\pm 1\}$ for generic $j$, by the standard classification \cite[\S 2.7]{KM85}. We handle each case:

\emph{Case $[-1]$ (always present).} The automorphism $[-1]$ acts on $E[4]$ as $-\mathrm{id}$ and therefore inverts a $\Gamma_1(4)$-section $\beta(1)$ of exact order $4$; it can fix $\beta$ only if $\beta(1)=-\beta(1)$, i.e., $\beta(1)\in E[2]$, contradicting exact order $4$. So $[-1]$ does not fix any $\Gamma_1(4)$-structure.

\emph{Case $[\zeta_3]$ at $j=0$.} On the $j=0$ elliptic curve the automorphism $[\zeta_3]$ is the standard $\Z[\zeta_3]$-multiplication. In the $\Z[\zeta_3]$-module $E[4]$ (well-defined since $\gcd(3,4)=1$), the endomorphism $\zeta_3-1\in\Z[\zeta_3]$ has norm $|1-\zeta_3|^2=3$. Since $\gcd(3,4)=1$, multiplication by $\zeta_3-1$ is invertible on $E[4]$. Hence $(\zeta_3-1)\beta(1)=0$ forces $\beta(1)=0$, contradicting that $\beta(1)$ has exact order $4$. Therefore no nontrivial $\zeta_3$-action fixes $\beta$.

\emph{Case $[i]$ at $j=1728$.} The automorphism $[i]$ acts on the tangent space as multiplication by $i\in\Z_p$ (after possibly adjoining $i$; for the moduli interpretation we may work over $\Z_p[i]$). Hence $[i]$ acts on $E[4]$ as multiplication by $i$. For $\beta(1)\in E[4]$ of exact order $4$, $[i]\beta(1)=i\cdot\beta(1)$, which equals $\beta(1)$ only if $(i-1)\beta(1)=0$. Since $i-1$ has norm $2$ in $\Z[i]$, and $\gcd(2,4)=2\ne 1$, this requires $\beta(1)$ to lie in the $2$-torsion subgroup $E[2]$, contradicting exact order $4$. The squared automorphism $[i]^2=[-1]$ is handled by the first case, and $[i]^3=[-i]$ is treated symmetrically. Hence no nontrivial element of $\mu_4$ fixes $\beta$.

Thus the only automorphism fixing a $\Gamma_1(4)$-structure is the identity, and $Y$ is fine. Since $(4,3p)=1$, the map $Y\to\mathcal X_0(3)$ is representable and finite étale over the ordinary locus. On $Y^{\rm ord}$, Katz's construction sends
\[
(E,C_3,\beta)\longmapsto(E/C_p,\bar C_3,\bar\beta),
\]
where $\beta$ is the $\Gamma_1(4)$-structure and $\bar C_3$, $\bar\beta$ are the images under the quotient isogeny $E\to E/C_p$. The subgroup $C_3$ and the $\Gamma_1(4)$-structure transport isomorphically because their orders are prime to $p$.

Pull this construction back to $Y\times_{\mathcal X_0(3)}Y$. An $S$-point of this fibre product is one elliptic curve with one $\Gamma_0(3)$-subgroup and two auxiliary $\Gamma_1(4)$-rigidifications; the two projections to $Y$ differ only by which rigidification is remembered. On the fine atlas $Y$, the two pullbacks of the canonical Frobenius lift along the two projections $Y\times_{\mathcal X_0(3)}Y\rightrightarrows Y$ agree because the canonical subgroup is functorial under isomorphisms of generalized elliptic curves (Katz \cite[Theorem~3.10.7]{Katz-padic}). The normalized weight-$k$ trace operator on $\omega^k$ is similarly compatible with these isomorphisms, since the quotient isogeny, the induced map on the Hodge bundle, and the sheaf trace are natural in the descent groupoid. Therefore Katz's construction gives a descent datum in the stack groupoid, with the cocycle on the triple fibre product inherited from functoriality of composition, and the Frobenius lift and normalized trace descend from $Y^{\rm ord}$ to $\mathcal X_0(3)^{\rm ord}$.

For the stabilizer equivariance at $P_-$: choose a point of the atlas above $P_-$, represented by $(E_0,C_3,\beta)$. The order-$3$ stabilizer element $[\zeta_3]$ does not fix $\beta$; rather it gives an arrow in the descent groupoid $(E_0,C_3,\beta)\to(E_0,C_3,[\zeta_3]\beta)$. By \Cref{lem:mu3-preserves-canonical-subgroup}, $[\zeta_3](C_p)=C_p$, so after quotienting by $C_p$ the same arrow descends to $(E_0/C_p,\bar C_3,\bar\beta)\to(E_0/C_p,\bar C_3,[\zeta_3]\bar\beta)$. Thus the descended Frobenius morphism commutes with the $\mu_3$-action in the quotient-stack chart. In coordinates, $\zeta_3^*\Phi^*r=\Phi^*\zeta_3^*r$, and since $\zeta_3^*r=\zeta_3 r$, this becomes $F(\zeta_3 r)=\zeta_3 F(r)$. Therefore $F(r)/r$ is $\mu_3$-invariant and lies in $\Z_p[\zeta_3][[r^3]]$. Finally, the canonical Frobenius lift reduces modulo $p$ to absolute Frobenius on the special fibre, so $F(r)\equiv r^p\pmod p$. Because $p\equiv 1\pmod 3$, $r^p=r(r^3)^{(p-1)/3}$. Writing
\[
G(s)=s^{(p-1)/3}+p\alpha_1(s)+p^2\alpha_2(s)+p^3\alpha_3(s)+O(p^4)
\]
gives \eqref{eq:Phi-local}.
\end{proof}

\begin{remark}
$M=4$ is a tool, not a property of the result. Any $M\ge 3$ with $(M,3p)=1$ killing $\{\pm 1\}$ produces the same descended operator: on $Y\times_{\mathcal X_0(3)}Y'$ for two such atlases $Y$, $Y'$, both lifts of the canonical Frobenius send $(E,C_3,\beta,\beta')$ to $(E/C_p,\bar C_3,\bar\beta,\bar\beta')$, since the canonical subgroup is functorial in $E$ alone and prime-to-$3p$ level structures transport isomorphically.
\end{remark}

\begin{remark}\label{rem:P-minus-ordinary}
The elliptic curve $E_0$ with $j=0$ has CM by $\Z[\zeta_3]$, and by Deuring's criterion has ordinary reduction at $p$ if and only if $p$ splits in $\Z[\zeta_3]$, i.e., $p\equiv 1\pmod 3$. Under the split-prime hypothesis Katz's canonical subgroup is therefore defined at $P_-$, and the descent of \Cref{lem:katz-trace-descent} carries both the Frobenius lift $\Phi$ and the normalized weight-$5$ trace operator $\mathcal C_{\Phi,5}$ to $\mathcal X_0(3)^{\rm ord}$; the latter realizes $\Lambda_p$ on $q$-expansions by \Cref{lem:trace-operator-w5}.
\end{remark}

\begin{remark}[Inert primes]
For inert primes $p\equiv 2\pmod 3$, the Frobenius acts on the character group of $\mu_3$ by $\chi\mapsto\chi^p=\chi^{-1}$. The local Frobenius is therefore semilinear on the stabilizer character rather than preserving the weight-one character of $r$; the split-prime normal form $r\cdot G(r^3)$ is unavailable. This mirrors the inert obstruction proved in \Cref{sec:inert} via the $q$-side.
\end{remark}

\begin{lemma}[Descended Katz trace: local formula and $q$-expansion]\label{lem:trace-operator-w5}
Let $\Phi$ be the descended canonical Frobenius lift on $\mathcal X_0(3)^{\rm ord}$ from \Cref{lem:katz-trace-descent}. Let $A=\Z_p[\zeta_3][[r]]$ be the stack-local cover at $P_-$. Let $e,e'$ be $\mu_3$-equivariant local generators of the weight-$5$, $\chi_3$ line bundle near the source and target of $\Phi$, and write
\[
\Phi^*e'=p^5J_\Phi e,\qquad J_\Phi\in A^\times.
\]
Then $J_\Phi\in A^{\mu_3,\times}=\Z_p[\zeta_3][[r^3]]^\times$, and the descended normalized Katz trace is given locally by
\begin{equation}\label{eq:CPhi5-def}
\mathcal C_{\Phi,5}(fe)=\frac{1}{p}\Tr_{A/\Phi^*A}\!\bigl(J_\Phi^{-1}f\bigr)e'.
\end{equation}
At the cusp $\infty$, with Tate parameter $q$, the same descended operator acts on finite Laurent $q$-expansions by
\[
\mathcal C_{\Phi,5}\!\left(\sum_{n\gg-\infty}a_nq^n\right)=\sum_{n\gg-\infty}a_{pn}q^n=\Lambda_p\!\left(\sum_{n\gg-\infty}a_nq^n\right).
\]
Thus the stack-local formula at $P_-$ and the Cartier operator on $q$-expansions are restrictions of one descended global operator.
\end{lemma}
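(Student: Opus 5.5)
The statement has three parts, which I would prove in order: the invariance of $J_\Phi$, the local formula \eqref{eq:CPhi5-def}, and the identification with $\Lambda_p$ at $\infty$.

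\emph{Invariance of $J_\Phi$.} By \Cref{lem:katz-trace-descent}, $\Phi$ is $\mu_3$-equivariant, and $e,e'$ are chosen $\mu_3$-equivariant. Both $\Phi^*e'$ and $e$ therefore transform under $\mu_3$ by the same stabilizer character of $\omega^5(\chi_3)$, because $\Phi$ intertwines the actions. The ratio $p^5J_\Phi=\Phi^*e'/e$ is then $\mu_3$-invariant, so it lies in $A^{\mu_3}=\Z_p[\zeta_3][[r^3]]$.

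To see that $J_\Phi$ is a unit, I would use that the canonical quotient isogeny $\pi:E\to E/C_p$ has étale dual on the ordinary locus. Hence its pullback on invariant differentials carries the weight-one generator to $p$ times a unit, and taking the fifth power gives $p^5J_\Phi$ with $J_\Phi\in A^\times$. The twist by $\chi_3$ is prime to $p$ and transports isomorphically, so it only changes $J_\Phi$ by a unit.

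\emph{The local formula.} Katz's normalized trace on $\omega^k$ is defined by $\frac1{p}\Tr_\Phi$ applied after identifying $\Phi^*\omega^k$ with $\omega^k$ via $\pi^*$. In the chosen trivializations this is exactly $fe\mapsto p^{-1}\Tr_{A/\Phi^*A}(J_\Phi^{-1}f)e'$; the scaling convention, including the power of $p$ absorbed by the normalization, will need care. The operator descends by the naturality already recorded in \Cref{lem:katz-trace-descent}, and $\mu_3$-equivariance of $\Phi$, $e$, $e'$ makes the formula compatible with the quotient chart.

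One structural point must be checked: $A$ is finite flat of rank $p$ over $\Phi^*A$. This holds because $F(r)\equiv r^p\pmod p$, so by Weierstrass preparation $A$ is free over $\Z_p[\zeta_3][[F(r)]]$ with basis $1,r,\dots,r^{p-1}$. The trace is therefore defined.

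\emph{Identification at $\infty$.} On the Tate curve $\mathrm{Tate}(q)$, the canonical subgroup is $\mu_p$. The quotient is $\mathrm{Tate}(q^p)$, so the descended lift is $q\mapsto q^p$, and the canonical differential $dx/x$ pulls back to itself. With the weight normalization this gives $J_\Phi=1$ up to the standard normalization, and $C_3$ maps to its image compatibly, since both cusps at $\infty$ of $X_0(3)$ are fixed.

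Next I would compute the trace of $q^n$ down to $\Z_p((q^p))$. It equals $p\,q^n$ if $p\mid n$ and $0$ otherwise, by summing over $\mu_p$-translates $q\mapsto\zeta q$ after base change. Dividing by $p$ gives $\Lambda_p$, and finite principal parts are handled by linearity.

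\emph{Main obstacle.} The expected main obstacle is the normalization bookkeeping: showing that the same global descended operator restricts both to the $P_-$ formula, with the unit $J_\Phi$, and to exactly $\Lambda_p$ at $\infty$, with no stray unit or power of $p$. To handle this I would fix a global generator of $\omega$ on the atlas $Y$ and compare both charts against it, using uniqueness of the descended trace from \Cref{lem:katz-trace-descent}.
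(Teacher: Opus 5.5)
Your outline follows the paper's proof step for step. You get the $\mu_3$-invariance of $J_\Phi$ by applying $\lambda^*$ to $\Phi^*e'=p^5J_\Phi e$ and using equivariance of $\Phi$. You get unit-ness from ordinarity; your \'etale-dual argument, via $\pi^*\check\pi^*=[p]^*=p$ on $\omega$, is actually a sharper justification than the paper's one-liner. At $\infty$ you use $\operatorname{Tate}(q)/\mu_p\simeq\operatorname{Tate}(q^p)$ and $J_\Phi=1$, compute the trace of $q^n$, and divide by $p$.

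The gap is the one thing \eqref{eq:CPhi5-def} really asserts, namely the power of $p$. You leave it open, and the convention you write down gives the wrong answer. The relation $\Phi^*e'=p^5J_\Phi e$ is the pullback under $\pi^*$. If you also identify $\Phi^*\omega^5$ with $\omega^5$ via $\pi^*$ and then apply $\frac1p\Tr$, you get $fe=p^{-5}J_\Phi^{-1}f\,\Phi^*e'\mapsto p^{-6}\Tr_{A/\Phi^*A}(J_\Phi^{-1}f)e'$. On the Tate chart this is $p^{-5}\Lambda_p$, not $\Lambda_p$. Your remark that $dx/x$ ``pulls back to itself'' already shows the inconsistency: that holds for $\check\pi^*$, whereas $\pi^*$ sends $dx/x$ to $p\,dx/x$.

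The fix is Katz's identification through the \'etale dual: use $(\check\pi^*)^{-1}=p^{-1}\pi^*$ in weight one, hence $p^{-5}\pi^*$ in weight five. The paper phrases this as ``the normalized weight-$5$ trace is $p^4$ times the sheaf trace.'' Under this identification $\Phi^*e'$ corresponds to $J_\Phi e$, so $fe=J_\Phi^{-1}f\cdot\Phi^*e'$. Applying $\frac1p\Tr$ then gives exactly $\frac1p\Tr_{A/\Phi^*A}(J_\Phi^{-1}f)e'$.

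Once this is fixed, your ``main obstacle'' disappears. The same one-line computation holds in any pair of equivariant local trivializations. So the $P_-$ formula and the Tate-chart formula, where $J_\Phi=1$ exactly rather than ``up to normalization,'' are automatically restrictions of one intrinsically defined operator. No comparison against a global generator of $\omega$ on the atlas $Y$ is needed.
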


\begin{proof}
The local formula follows from the definition of Katz's normalized trace. Since $\Phi^*e'=p^5J_\Phi e$, $e=p^{-5}J_\Phi^{-1}\Phi^*e'$, so the sheaf trace on $\omega^{\otimes 5}(\chi_3)$ sends $fe=p^{-5}J_\Phi^{-1}f\,\Phi^*e'$ to $p^{-5}\Tr_{A/\Phi^*A}(J_\Phi^{-1}f)e'$. Katz's normalized weight-$5$ trace is $p^4$ times this sheaf trace \cite[\S 3.11]{Katz-padic}, hence the coefficient is $p^4p^{-5}=1/p$.

The invariant property of $J_\Phi$ is forced by equivariance. If $\lambda\in\mu_3$, then $\Phi$ is $\mu_3$-equivariant by \Cref{lem:katz-trace-descent}. The local generators $e,e'$ have the same $\mu_3$-character, because they generate the same automorphic line bundle on source and target. Applying $\lambda^*$ to $\Phi^*e'=p^5J_\Phi e$ gives $\Phi^*(\lambda^*e')=p^5\lambda^*(J_\Phi)\lambda^*e$. Since $\lambda^*e$ and $\lambda^*e'$ are multiplied by the same character, comparison gives $\lambda^*(J_\Phi)=J_\Phi$. Thus $J_\Phi\in A^{\mu_3}=\Z_p[\zeta_3][[r^3]]$. It is a unit because the canonical Frobenius is an isogeny of ordinary elliptic curves and the normalized pullback on the Hodge bundle is a unit after extracting the explicit factor $p$ in weight $1$.

Now restrict to the Tate chart at $\infty$. The universal object is $\operatorname{Tate}(q)$ with its $\Gamma_0(3)$-subgroup. The canonical subgroup is $\mu_p$, and the quotient is $\operatorname{Tate}(q)/\mu_p\simeq\operatorname{Tate}(q^p)$. For the invariant differential $\omega=dT/T$, the quotient map satisfies $\Phi^*\omega'=p\,\omega$. Thus in weight $5$ we have $J_\Phi=1$ on the Tate chart. Let $Q=q^p$. The extension of Laurent series rings is $\Z_p((Q))\subset\Z_p((q))$, with basis $1,q,\ldots,q^{p-1}$. Multiplication by $q^a$, $1\le a\le p-1$, has trace $0$ in this basis, while multiplication by $q^{pm}=Q^m$ has trace $pQ^m$. Hence
\[
\Tr_{\Z_p((q))/\Z_p((q^p))}(q^n)=\begin{cases}p\,q^{n/p},& p\mid n,\\ 0,& p\nmid n.\end{cases}
\]
The local formula therefore gives $\mathcal C_{\Phi,5}(\sum a_nq^n)=\frac{1}{p}\sum a_n\Tr(q^n)=\sum a_{pn}q^n=\Lambda_p$.
\end{proof}

\begin{remark}\label{rem:JPhi-suppression}
$J_\Phi^{-1}\in\Z_p[\zeta_3][[r^3]]^\times$ is $\mu_3$-invariant of $p$-valuation $0$, hence suppresses without affecting $\mu_3$-character, $p$-valuation, or $r$-exponent mod $3$ in the pole-counting argument below.
\end{remark}

\subsection{Length-three Witt--Cartier pole estimate}\label{subsec:pole-estimate}

We use throughout the local setup of \Cref{subsec:stack-local-descent}: $A=\Z_p[\zeta_3][[r]]$, $\zeta_3\cdot r=\zeta_3 r$, $\Phi^*r=F(r)$ as in \eqref{eq:Phi-local}, $\mathcal C_{\Phi,5}$ as in \Cref{lem:trace-operator-w5}, $C_0=r^2c(r^3)e$. By \Cref{rem:JPhi-suppression}, $J_\Phi^{-1}$ is suppressed.

Recall $\nu_p,\phi_p$ from \Cref{def:nu-phi}: $\nu_p=e^{-U_p}=t(q^p)/t(q)^p$ and $\phi_p=\nu_p-1\in pq\Z_{(p)}[[q]]$.

Since $F(r)\equiv r^p\pmod p$, $F$ is distinguished of Weierstrass degree $p$; Weierstrass preparation makes $A$ finite free of rank $p$ over $\Phi^*A=\Z_p[\zeta_3][[F(r)]]$. Write $R:=F(r)$ for the target parameter.

\begin{lemma}[Trace-residue formula on the tame $\mu_3$-chart]\label{lem:trace-residue-tame}
Let $A=\Z_p[\zeta_3][[r]]$, let $F(r)\in r\,A$ be a Frobenius lift on $A$, i.e.\ $F(r)\equiv r^p\pmod p$. Thus $F$ is distinguished of Weierstrass degree $p$; set $R=F(r)$, so $B:=\Z_p[\zeta_3][[R]]\subset A$ is a finite flat extension of rank $p$. Let $J_\Phi(r)\in A^\times$ denote the Jacobian of $\Phi$ with respect to the chosen weight-$5$ trivialization $e$, so that the normalized weight-$5$ Katz trace satisfies $\mathcal C_{\Phi,5}(he)=\bigl(p^{-1}\Tr_{A/B}(h\,J_\Phi^{-1})\bigr)\,e$ for $h$ in the field of fractions of $A$. Then for every $K\ge 1$ and every $h\in\operatorname{Frac}(A)$ with at worst a Laurent principal part at $r=0$,
\begin{equation}\label{eq:trace-residue}
[R^{-K}]\,\mathcal C_{\Phi,5}(he)=\frac{1}{p}\Res_{r=0}\!\bigl(h(r)\,J_\Phi(r)^{-1}\,F'(r)\,F(r)^{K-1}\,\dd r\bigr).
\end{equation}
In particular $J_\Phi^{-1}\in\Z_p[\zeta_3][[r^3]]^\times$ is a $\mu_3$-invariant unit on the tame chart, so it does not affect the $r$-valuation, the $p$-valuation, or the $\mu_3$-character class of the residue integrand.
\end{lemma}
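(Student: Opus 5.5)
We reduce \eqref{eq:trace-residue} to the classical residue description of the trace for a finite flat extension of power-series rings, and then record the invariance remarks about $J_\Phi^{-1}$.

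\emph{Step 1: the extension and its trace.} Since $F(r)\equiv r^p\pmod p$ is distinguished of degree $p$, Weierstrass preparation shows that $A$ is free over $B=\Z_p[\zeta_3][[R]]$ with basis $1,r,\dots,r^{p-1}$. Inverting $R$ (equivalently $r$) gives $A[1/r]=A\otimes_BB[1/R]$, which is still finite free over $B((R))$, so $\Tr_{A/B}$ extends $B((R))$-linearly to elements with Laurent principal part. By linearity and $p$-adic continuity it suffices to prove the formula for $h$ in a dense set of Laurent polynomials in $r$.

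\emph{Step 2: the trace--residue identity.} The key input is Tate's residue formula for finite flat extensions of Laurent series rings:
\[
\Res_{R=0}\bigl(\Tr_{A/B}(x)\,\omega\bigr)=\Res_{r=0}\bigl(x\,\Phi^*\omega\bigr)\qquad(x\in A[1/r],\ \omega\in B((R))\,\dd R).
\]
We apply it with $\omega=R^{K-1}\dd R$, which has $\Phi^*\omega=F(r)^{K-1}F'(r)\,\dd r$. Then
\[
[R^{-K}]\Tr_{A/B}(x)=\Res_{R=0}\bigl(\Tr(x)R^{K-1}\dd R\bigr)=\Res_{r=0}\bigl(x\,F^{K-1}F'\,\dd r\bigr).
\]
Putting $x=hJ_\Phi^{-1}$ and multiplying by $1/p$ gives \eqref{eq:trace-residue}, using the local form of $\mathcal C_{\Phi,5}$ from \Cref{lem:trace-operator-w5}.

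For a self-contained check, the identity can also be verified over $\operatorname{Frac}(A)\otimes\Q_p$. There $F$ has exactly $p$ roots $r_i$ near $0$ in a suitable extension, so $\Tr(x)(R)=\sum_i x(r_i(R))$ over the branches, and the residue change of variables holds branchwise. The difficulty is that the ramification at $R=0$ is wild in characteristic $p$, so this computation only makes sense after inverting $p$ and then descending the resulting integral identity.

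This step is the main obstacle. I would first try to cite Tate's residue theorem (or Hartshorne's residue trace compatibility) applied to the finite flat $B((R))\subset A((r))$ directly. Failing that, I would prove it by reducing to the case $F=r^p+pr(\cdots)$: trace and residue both depend continuously on the coefficients of $F$, and the claim can be checked on the basis $r^a$ via Newton's identities.

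\emph{Step 3: invariance remarks.} \Cref{lem:trace-operator-w5} already gives $J_\Phi\in\Z_p[\zeta_3][[r^3]]^\times$. Multiplying by $J_\Phi^{-1}$ therefore changes neither the leading $r$-order nor the minimal $p$-valuation of coefficients. Since $\zeta_3^*J_\Phi=J_\Phi$, it also preserves the $\mu_3$-eigencharacter, i.e.\ the exponent class mod $3$ of each monomial of the integrand. This gives the final sentence of the lemma.
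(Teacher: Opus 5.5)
Your argument is the paper's. Both invoke the Tate/Hartshorne trace--residue compatibility for $B=\Z_p[\zeta_3][[R]]\subset A$. The paper does this through the kernel $F'(r)\,\dd r/(R-F(r))$ expanded as $\sum_{K\ge1}F^{K-1}R^{-K}$, and you through $\omega=R^{K-1}\,\dd R$. Both then replace $h$ by $hJ_\Phi^{-1}$, divide by $p$, and get the last sentence from $J_\Phi\in\Z_p[\zeta_3][[r^3]]^\times$ (\Cref{lem:trace-operator-w5}). However, two of your supporting claims are false as written.

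First, inverting $R$ is not equivalent to inverting $r$. The series $F(r)/r\equiv r^{p-1}\pmod p$ has constant term $F'(0)\in p\,\Z_p[\zeta_3]$, which need not vanish. For example, $F=r^p+pr$ gives $F/r=r^{p-1}+p$, which is not a unit of $\Z_p[\zeta_3]((r))$. The canonical lift also has $F'(0)\neq0$: it is $p$ times a unit, the multiplier at the CM fixed point. Hence $A[1/r]\subsetneq A[1/R]$, and $A[1/r]$ is not even a $B[1/R]$-module. What is true, and suffices, is the inclusion $A[1/r]\subseteq A[1/R]$, because the norm of $r$ is $\pm R$ times a unit of $B$. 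So $\Tr_{A/B}$ is defined on $A[1/r]$ with values in $B[1/R]$.

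Second, for the same reason your displayed residue identity fails for general $\omega\in B((R))\,\dd R$. After inverting $p$, the fibre of $\Phi$ over $R=0$ consists of $r=0$ together with the $p-1$ roots of $F(r)/r$ in the open unit disc, and the correct statement sums residues over all of them. Take $x=1$, $\omega=\dd R/R$ and $F'(0)\neq0$. The left side is $\Res_{R=0}(p\,\dd R/R)=p$. The residue at $r=0$ alone, computed in $\Q_p(\zeta_3)((r))$ where $F$ has a simple zero, is $1$.

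The version with only $\Res_{r=0}$ holds whenever $x\,\Phi^*\omega$ is regular at the other fibre points. This is the case for $\omega=R^{K-1}\,\dd R$ with $K\ge1$ and $x$ with poles only at $r=0$, which is exactly the case you use. Your branchwise check is the right way to make this precise. The branches $r_i(R)$ through the nonzero roots contribute functions holomorphic at $R=0$, hence nothing to $[R^{-K}]$ for $K\ge1$. Since both sides lie in $\Z_p[\zeta_3]$, the identity then descends from $\Q_p(\zeta_3)$. With this repair your proof is complete, and it also explains why the lemma is restricted to $K\ge1$.
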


\begin{proof}
For the principal part at $R=0$, the trace-residue formula for finite flat extensions of complete local rings (see \cite[\S 3]{Tate-residues} or \cite[Ch.~III]{Hartshorne-RD}) gives, for $K\ge 1$,
\[
[R^{-K}]\Tr_{A/B}(h)=[R^{-K}]\Res_{r=0}\!\left(\frac{h(r)F'(r)}{R-F(r)}\,\dd r\right).
\]
In the Laurent ring $A((R^{-1}))$,
\[
\frac{1}{R-F(r)}=\frac{1}{R}\cdot\frac{1}{1-F(r)/R}=\sum_{K\ge 1}F(r)^{K-1}R^{-K},
\]
viewed as a formal expansion in the independent target variable $R^{-1}$ in $A((R^{-1}))$; no ordinary $r$-order assertion on $F$ is used. The required finiteness and rank-$p$ trace are supplied by the Weierstrass degree-$p$ property $F\equiv r^p\pmod p$. Therefore
\[
[R^{-K}]\Tr_{A/B}(h)=\Res_{r=0}\!\bigl(h(r)F'(r)F(r)^{K-1}\,\dd r\bigr).
\]
Replacing $h$ by $hJ_\Phi^{-1}$ and dividing by $p$ gives \eqref{eq:trace-residue}.

The statement on $J_\Phi^{-1}$: by \Cref{lem:Pminus-unit-leading,lem:trace-operator-w5}, $J_\Phi$ is a unit on the tame $\mu_3$-chart and lies in the $\mu_3$-invariant subring $\Z_p[\zeta_3][[r^3]]$. As a unit in this subring it has $r$-valuation $0$ and $\mu_3$-character class $0$, and its constant term is a $p$-adic unit, so it has $p$-valuation $0$. Therefore multiplying the integrand by $J_\Phi^{-1}$ preserves all three quantities used in the pole estimate.
\end{proof}

\begin{remark}[Sign bookkeeping]\label{rem:trace-residue-sign}
If one writes the trace-residue formula with denominator $F(r)-R$, an overall minus sign must be inserted:
\[
[R^{-K}]\Tr_{A/B}(h)=-[R^{-K}]\Res_{r=0}\!\left(\frac{h(r)F'(r)}{F(r)-R}\,\dd r\right).
\]
Together with
\[
\frac{1}{F(r)-R}=-\sum_{K\ge 1}F(r)^{K-1}R^{-K},
\]
this gives the same positive coefficient formula \eqref{eq:trace-residue}. The pole estimate uses only valuations and $\mu_3$-character classes, but the displayed coefficient identity is fixed with the positive sign.
\end{remark}

\begin{proposition}[Length-three Witt--Cartier pole estimate]\label{prop:pole-estimate}\label{prop:pole-estimate-star}
For every $\ell\in\{1,2,3\}$ and every $s$ with $\ell\le s\le 3$,
\begin{equation}\label{eq:star}
p^{s-\ell}\,\mathcal C_{\Phi,5}\!\bigl(r^{2+s(1-p)}\Z_p[\zeta_3][[r^3]]\,e\bigr)\subset r^{-1}\Z_p[\zeta_3][[r^3]]\,e\pmod{p^{4-\ell}}.
\end{equation}
\end{proposition}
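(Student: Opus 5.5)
We reduce \eqref{eq:star} to a statement about individual monomials and then apply the trace-residue formula of \Cref{lem:trace-residue-tame}. By $\Z_p[\zeta_3]$-linearity and $r$-adic continuity of $\mathcal C_{\Phi,5}$, it suffices to treat $h=r^{2+s(1-p)+3j}$ with $j\ge 0$. Since $J_\Phi^{-1}$ is a $\mu_3$-invariant unit (\Cref{rem:JPhi-suppression}), we absorb it into the coefficient series without changing $r$-exponents modulo $3$ or $p$-valuations. The image $\mathcal C_{\Phi,5}(he)$ is a Laurent series in $R=F(r)$ with only finitely many negative powers, and it splits into two parts.

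\emph{The regular part.} The part in $\Z_p[\zeta_3][[R]]$ has to be shown to lie in the $\mu_3$-character class $r^{-1}\Z_p[\zeta_3][[r^3]]$. This follows from equivariance. $\Phi$ is $\mu_3$-equivariant by \Cref{lem:katz-trace-descent}, so $\mathcal C_{\Phi,5}$ commutes with the $\mu_3$-action. Now $h$ has character $r^{2+s(1-p)}$, and since $p\equiv1\pmod3$ the exponent satisfies $2+s(1-p)\equiv 2\equiv -1\pmod 3$. The output therefore transforms like $R^{-1}$ times an invariant, and by $F(r)=rG(r^3)$ we have $R^{-1}\Z_p[\zeta_3][[R^3]]=r^{-1}\Z_p[\zeta_3][[r^3]]$ after clearing the unit $G$. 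Integrality of the regular part also has to be checked modulo $p^{4-\ell}$, and that is part of the valuation count below.

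\emph{The polar part.} By \eqref{eq:trace-residue}, the coefficient $[R^{-K}]$ equals $p^{-1}\Res_{r=0}(h J_\Phi^{-1}F'F^{K-1}\dd r)$. The integrand has character $r^{2+s(1-p)}\cdot r^{K-1}\cdot r^{K-1}\cdot r\,\dd r$, using $F'\dd r\sim r^{0}\dd r$ and $F^{K-1}\sim r^{K-1}$. Since $\Res$ only sees $r^{-1}\dd r$, the residue vanishes unless $2K+1-s(p-1)\equiv 0\pmod 3$, which forces $K\equiv1\pmod3$. The term $K=1$ is allowed, because $R^{-1}$ already lies in the target $r^{-1}\Z_p[\zeta_3][[r^3]]$. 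So the only dangerous terms are those with $K\ge4$.

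\emph{Valuation count.} For the surviving terms we write $F=r^p+p\,r\alpha_1+p^2 r\alpha_2+p^3r\alpha_3$ as in \eqref{eq:Phi-local}. We then expand $F'F^{K-1}$ by the number $i$ of non-$r^p$ factors used. Each such factor contributes a power of $p$ and lowers the $r$-degree by roughly $p-1$, relative to $r^{p(K-1)}$. A nonzero residue requires the total $r$-degree $2+s(1-p)+3j+\deg(F'F^{K-1})$ to equal $-1$. With only the leading term $r^{p(K-1)}$ the degree is far too large, so we need roughly $(K-1)-s$ or more defect factors, each costing $p$ or more. Combining this with the global prefactor $p^{s-\ell}/p$, we aim to show that the $p$-valuation is at least $s-\ell-1+(K-1)\ge 4-\ell$ once $K\ge4$. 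The role of $K\equiv1\pmod3$ is precisely to upgrade the naive bound from $K\ge2$ to $K\ge4$, which supplies the extra digit. The same expansion, carried out for the $R^{\ge0}$ coefficients, gives the integrality of the regular part modulo $p^{4-\ell}$.

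\emph{Main obstacle.} The hardest step is this bookkeeping. We have to count exactly which terms from $F'$ (where $F'=pr^{p-1}+p\alpha_1+\dots$ gains a $p$ from the leading term) and from $F^{K-1}$ can reach $r$-degree $-1$, and then check that the $1/p$ of the trace is always compensated. We would first handle $s=\ell$ with $K=4$ explicitly, and then reduce the general case to it by monotonicity in $K$ and $s$.
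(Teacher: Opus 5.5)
Your route is the paper's own: the trace--residue formula, the $\mu_3$-character count forcing $K\equiv1\pmod 3$, and a valuation count for the surviving $K\ge4$. The gap is that the valuation count, which is the actual content of the estimate, is only announced, and the heuristic you give for it does not close. Suppose, as you say, that about $(K-1)-s$ defect factors are forced. Then the integrand has $p$-valuation $\ge 1+(K-1-s)$. After the trace's $1/p$ and the prefactor $p^{s-\ell}$ you get only $K-1-\ell$, i.e.\ $3-\ell$ at $K=4$. That is exactly one digit short of what \eqref{eq:star} asserts. Your stated target $s-\ell-1+(K-1)\ge 4-\ell$ also fails in an admissible case: $\ell=s=1$, $K=4$ gives $2\ge 3$. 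Appealing to ``monotonicity in $K$ and $s$'' cannot repair an off-by-one in the exponent.

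The missing step is the exact residue equation.
\begin{itemize}
\item Write $F=r^p+pr\,\delta(r^3)$ with $\delta\in\Z_p[\zeta_3][[r^3]]$. This is exact, since $F=rG(r^3)$ and $G(r^3)\equiv r^{p-1}\pmod p$.
\item Choose $b\le K-1$ factors $pr\delta$ in $F^{K-1}$, and take either $pr^{p-1}$ or $p\,(r\delta)'$ from $F'$.
\item Requiring total $r$-exponent $-1$ gives $(p-1)(s+b)=pK+2+3n$ in the first case and $(p-1)(s+b)=p(K-1)+3+3n$ in the second. Here $n\ge0$ absorbs your $3j$ and the $r^3$-series.
\item Hence $s+b=K+\frac{K+2+3n}{p-1}$, respectively $s+b=(K-1)+\frac{K+2+3n}{p-1}$.
\item The strictly positive excess $\frac{K+2+3n}{p-1}$ forces $b\ge K+1-s$, respectively $b\ge K-s$. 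This is one more than your count.
\item The valuation is then $\ge (1+b)-1+(s-\ell)\ge K-\ell\ge 4-\ell$ once $K\ge4$.
\end{itemize}

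Three smaller slips:
\begin{itemize}
\item Your integrand character counts $r^{K-1}$ twice. The correct condition is $2+(K-1)+1\equiv0$, i.e.\ $K+2\equiv0\pmod 3$. Your $2K+1\equiv0$ happens to be equivalent mod $3$.
\item $G$ is not a unit, since it is $\equiv r^{p-1}\bmod p$. So the identification $R^{-1}\Z_p[\zeta_3][[R^3]]=r^{-1}\Z_p[\zeta_3][[r^3]]$ via $R=F(r)$ is false. It is also unnecessary: the output is a section on the target disc, where $R$ is itself the coordinate (the paper simply relabels $R$ as $r$).
\item The residue expansion only computes $[R^{-K}]$ for $K\ge1$, so it cannot give integrality of the $R^{\ge0}$ part. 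That integrality follows instead from the trace vanishing modulo $p$ on the purely inseparable reduction $\Fp((r))/\Fp((r^p))$.
\end{itemize}
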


\begin{proof}
By \Cref{lem:trace-residue-tame}, the principal-part coefficient in $R$ is
\begin{equation}\label{eq:residue-formula}
[R^{-K}]\,\mathcal C_{\Phi,5}(fe)=\frac{1}{p}\Res_{r=0}\!\bigl(f(r)\,J_\Phi(r)^{-1}\,F'(r)\,F(r)^{K-1}\,\dd r\bigr),
\end{equation}
where $J_\Phi^{-1}\in\Z_p[\zeta_3][[r^3]]^\times$ has $r$-valuation $0$, $p$-valuation $0$, and $\mu_3$-character class $0$, hence does not affect the estimates below; we suppress it in the rest of the proof and refer to \Cref{lem:trace-residue-tame} for justification. After identifying the target completed disk with $\operatorname{Spf}A$ and relabelling $R$ back to $r$, this is the $r^{-K}$ coefficient.

\medskip\noindent\textbf{Step 1: $\mathbb Z/3$-counting forces $K\equiv 1\pmod 3$.}\quad
$f\in r^{2+s(1-p)}\Z_p[\zeta_3][[r^3]]$ contributes $r$-exponent $\equiv 2+s(1-p)\equiv 2\pmod 3$ since $1-p\equiv 0\pmod 3$. The derivative
\[
F'(r)=p\,r^{p-1}+pD_1(r)+p^2D_2(r)+p^3D_3(r),\quad D_i(r)=\alpha_i(r^3)+3r^3\alpha_i'(r^3),
\]
has both terms in exponent class $\equiv 0\pmod 3$. The factor $F(r)^{K-1}$ has each factor of $F$ in class $\equiv 1\pmod 3$, so contributes $\equiv K-1\pmod 3$. A nonzero residue requires total exponent $-1\equiv 2\pmod 3$, giving $2+0+(K-1)\equiv 2\pmod 3$, hence $K\equiv 1\pmod 3$. For $K\ge 2$ this forces $K\ge 4$.

\medskip\noindent\textbf{Step 2: Valuation count.}\quad
Write $F(r)^{K-1}=r^{p(K-1)}(1+X)^{K-1}$ with $X=pr^{1-p}\alpha_1+p^2r^{1-p}\alpha_2+p^3r^{1-p}\alpha_3$. The binomial expansion selects $b\le K-1$ factors of $X$, each contributing minimal $r$-exponent $1-p$ and $p$-valuation at least $1$.

\emph{Derivative-high case} ($F'\to pr^{p-1}$): total $r$-exponent
\[
E_{\rm high}=2+s(1-p)+(p-1)+p(K-1)+b(1-p)+3h=1+s+b+p(K-b-s)+3h
\]
with $h\ge 0$. Residue condition $E_{\rm high}=-1$:
\[
(p-1)(s+b)=pK+2+3h,\qquad s+b-K=\frac{K+2+3h}{p-1}>0,
\]
hence $s+b\ge K+1$, so $b\ge K+1-s$. With $K\ge 4$, $b\ge 5-s$.

\emph{Derivative-low case} ($F'\to p^jD_j$, $j\ge 1$): similar computation gives
\[
(p-1)(s+b)=p(K-1)+3+3h,\qquad s+b-(K-1)=\frac{K+2+3h}{p-1}>0,
\]
hence $s+b\ge K$ and $b\ge K-s\ge 4-s$ when $K\ge 4$.

\emph{Control example} ($p=7$, $K=4$, $s=1$, $h=0$): low case requires $6(s+b)=24$, so $s+b=4$ and $b=3$, satisfying $b\le K-1=3$. High case requires $6(s+b)=30$, so $b=4>K-1=3$, hence does not occur in this minimal example.

\medskip\noindent\textbf{Step 3: Total valuation.}\quad
Before the external $1/p$, each integrand has $p$-valuation $\ge 1+b$ (the explicit $p$ from $F'$ plus at least $b$ from the $X$-expansion). After $1/p$, valuation $\ge b\ge 4-s$ (the minimum, from the low case). Multiplying by external $p^{s-\ell}$ in \eqref{eq:star} gives valuation $\ge (4-s)+(s-\ell)=4-\ell$. Thus every $r^{-K}$ coefficient with $K\ge 2$ vanishes mod $p^{4-\ell}$.

The suppressed $O(p^4)$ part of $F$ contributes valuation $\ge 4$ before $1/p$, hence $\ge 3$ after, already vanishing for all $\ell\in\{1,2,3\}$.

\medskip\noindent\textbf{Step 4: $K=1$.}\quad
$F^{K-1}=1$, and the residue lands in the $r^{-1}$ part of the principal part. Combining steps gives \eqref{eq:star}.
\end{proof}

\begin{remark}[The role of the $\mathbb Z/3$-counting]
Without the $\mu_3$-equivariant congruence-class exclusion, the residue calculation would not rule out the lower principal parts $K=2,3$. The split CM argument gains its extra digit precisely by excluding those two possibilities and forcing $K\ge 4$; this is the local mechanism behind the improvement from the generic weight-$3$ expectation $p^3$ \cite{RRV} to the $p^4$ estimate proved here.
\end{remark}

\subsection{Hecke finite differences and Cartier--Hecke comparison}\label{subsec:hecke-finite-differences}

For $\ell=1,2,3$ define first the Hecke finite-difference numerator for $C_0$:
\begin{equation}\label{eq:hecke-fd-numerator}
N_\ell(C_0):=\sum_{j=0}^\ell(-1)^{\ell-j}\binom{\ell}{j}t^j\,T_p\!\left(\frac{C_0}{t^{jp}}\right),
\end{equation}
where $T_p$ is the Hecke operator on weakly holomorphic weight-$5$, $\chi_3$ forms. After the global divisibility statement of \Cref{lem:divisibility-Nell}, we write
\begin{equation}\label{eq:hecke-fd}
\mathfrak E_\ell:=N_\ell(C_0)/p^\ell.
\end{equation}

\begin{lemma}[Hecke action on weakly holomorphic forms]\label{lem:hecke-weakly-holomorphic}
Let $f$ be a weight-$5$, $\chi_3$ form on $\Gamma_0(3)$ over $\Z_{(p)}$, weakly holomorphic with finite principal parts at both cusps: $f|_\infty=\sum_{n\ge-m_\infty}a_nq^n$ and $f|_0=\sum_{n\ge-m_0}b_nq_0^n$. Then the Hecke operator $T_p$ (where $p\ne 3$ prime) defined via the double coset $\Gamma_0(3)\bigl(\begin{smallmatrix}1&0\\0&p\end{smallmatrix}\bigr)\Gamma_0(3)$ acting in weight $5$ with character $\chi_3$ \cite[\S 5]{AtkinLehner70}, \cite[\S 5.1]{DS} preserves the space of such forms and on the cusp-$\infty$ $q$-expansion is given by
\[
T_pf(q)=\sum_n a_{pn}q^n+\chi_3(p)p^4\sum_n a_nq^{pn}.
\]
At split primes $\chi_3(p)=1$ this reads $T_p f=\Lambda_p f+p^4\operatorname{Ver}_p(f)$ where $\operatorname{Ver}_p(f)(q)=f(q^p)$. The operator preserves $\Z_{(p)}$-integrality and commutes with reduction modulo $p^k$ for every $k\ge 1$.
\end{lemma}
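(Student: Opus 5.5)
I will derive the lemma from the classical double-coset description of $T_p$ on $\Gamma_0(3)$ with nebentypus $\chi_3$, transported verbatim to weakly holomorphic forms. Since $p\ne 3$, the double coset $\Gamma_0(3)\bigl(\begin{smallmatrix}1&0\\0&p\end{smallmatrix}\bigr)\Gamma_0(3)$ decomposes into the $p+1$ right cosets with representatives $\bigl(\begin{smallmatrix}1&j\\0&p\end{smallmatrix}\bigr)$, $0\le j\le p-1$, together with one extra representative $\sigma_p\bigl(\begin{smallmatrix}p&0\\0&1\end{smallmatrix}\bigr)$ with $\sigma_p\in\Gamma_0(3)$, $\sigma_p\equiv\bigl(\begin{smallmatrix}p^{-1}&0\\0&p\end{smallmatrix}\bigr)\pmod 3$ \cite[\S 5.2]{DS}. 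With the normalization $T_pf=p^{k/2-1}\sum_\gamma f|_k\gamma$ for $k=5$, the first $p$ representatives give $\frac1p\sum_j f\bigl(\frac{\tau+j}{p}\bigr)=\sum_n a_{pn}q^n$, and the last gives $\chi_3(p)p^{4}f(p\tau)=\chi_3(p)p^4\sum_n a_nq^{pn}$. These computations use only the transformation law and the $q$-expansion at $\infty$; no growth condition enters. In particular, finitely many negative $n$ cause no difficulty, and this yields the displayed formula. I will take $p\ge 5$, so $p\nmid 3$; for $\chi_3(p)=1$ the formula is exactly $\Lambda_p f+p^4\operatorname{Ver}_p f$.

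Next I will check that $T_pf$ is again a weight-$5$, $\chi_3$ weakly holomorphic form with finite principal parts at both cusps. Invariance under $\Gamma_0(3)$ with character $\chi_3$ is the standard double-coset argument: right multiplication by $\Gamma_0(3)$ permutes the cosets, and the character bookkeeping is the same as in the holomorphic case. Holomorphy on $\mathfrak H$ is clear. At $\infty$, the formula shows the principal part is bounded by $q^{-pm_\infty}$. At the cusp $0$, I will conjugate by the Fricke involution $w_3$, which normalizes the Hecke algebra up to the twist $T_pw_3^*=\chi_3(p)w_3^*T_p$, or more directly observe that each $f|\gamma$ for $\gamma$ in the double coset has at $0$ an expansion in $q_0^{1/p}$ with principal part bounded by a multiple of $m_0$. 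The sum is then again a Laurent series with finite principal part.

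Integrality and compatibility with reduction modulo $p^k$ follow at once from the $q$-expansion formula. Both $\Lambda_p$ and $\operatorname{Ver}_p$ map $\Z_{(p)}((q))$ to itself, the coefficient $\chi_3(p)p^4$ lies in $\Z$, and the formula is $\Z_{(p)}$-linear coefficientwise. It therefore preserves $p^k\Z_{(p)}((q))$ and descends to reductions; the $q$-expansion principle at $\infty$ identifies forms with their expansions.

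The only genuine subtlety I expect is the behaviour at the cusp $0$, that is, confirming that no infinite principal part appears after the slash action. I will handle this by writing each coset representative times an Atkin--Lehner matrix as an upper-triangular matrix times an element of $\Gamma_0(3)w_3$. This reduces the claim to the same computation at $\infty$ for $w_3^*f$, which has finite principal part by hypothesis.
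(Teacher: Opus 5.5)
Your proposal is correct and is essentially the paper's proof. Both use the same Diamond--Shurman coset decomposition to get $\Lambda_p f+\chi_3(p)p^4\operatorname{Ver}_p f$ on Laurent expansions, read off integrality and reduction modulo $p^k$ from that formula, and reduce the cusp $0$ to the cusp $\infty$ through the Fricke involution. The paper proves preservation of weak holomorphy geometrically, via $T_p=(\pi_2)_*\pi_1^*$ on $X\leftarrow X_0(3p)\to X$; your slash-operator version instead makes the paper's ``symmetric argument'' at the cusp $0$ explicit through the factorization $\alpha_jw_3=\gamma_jw_3\beta_j$, with $\gamma_j\in\Gamma_0(3)$ and $\beta_j$ upper triangular of determinant $p$. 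Take the factors in that order, so that $f|\alpha_jw_3$ is a character value times $(f|w_3)|\beta_j$. Also, nothing in your argument uses $p\ge5$, so the case $p=2$ of the statement is covered as well.
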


\begin{proof}
Let $X=X_0(3)_{\Z_{(p)}}$ and let $\omega^5(\chi_3)$ denote the corresponding automorphic line bundle of weight $5$ and character $\chi_3$. The Hecke correspondence
\[
X\xleftarrow{\pi_1} X_0(3p)\xrightarrow{\pi_2} X
\]
with $(p,3)=1$ is a finite flat correspondence over $\Z_{(p)}$, étale on the generic fibre and on the étale (Verschiebung) component over the ordinary locus, away from the supersingular and cusp loci. It has finite degree on the compact curve. The pullback $\pi_1^*$ and the trace $(\pi_2)_*$ send weakly holomorphic sections of $\omega^5(\chi_3)$ to weakly holomorphic sections; this is because both maps are finite morphisms of compact curves and therefore preserve sections with poles supported on the cuspidal locus (the only divisorial poles of weakly holomorphic forms). Hence $T_p:=(\pi_2)_*\pi_1^*$ preserves the space of weakly holomorphic weight-$5$, $\chi_3$ forms over $\Z_{(p)}$.

To compute the $q$-expansion at the cusp $\infty$ on Laurent expansions, we use the standard double-coset description of the Hecke operator. For $p\ne 3$, by \cite[Lem.~5.1.1 and Prop.~5.2.1]{DS} (which describe the double-coset decomposition on $\Gamma_0(3)$ and the action of $\Gamma_0(3)\bigl(\begin{smallmatrix}1&0\\0&p\end{smallmatrix}\bigr)\Gamma_0(3)$ in weight $k$ with character $\chi$), the orbits split as
\[
\Gamma_0(3)\!\begin{pmatrix}1&0\\0&p\end{pmatrix}\!\Gamma_0(3)=\bigsqcup_{j=0}^{p-1}\Gamma_0(3)\begin{pmatrix}1&j\\0&p\end{pmatrix}\,\sqcup\,\Gamma_0(3)\begin{pmatrix}p&0\\0&1\end{pmatrix},
\]
each $j\in\{0,\dots,p-1\}$ contributing $p^{-1}\sum_jf((\tau+j)/p)$ and the last orbit contributing $p^{k-1}\chi(p)f(p\tau)$ in weight $k$. Apply this in weight $k=5$ and character $\chi_3$; expand $f|_\infty=\sum_n a_nq^n$ with $q=e^{2\pi i\tau}$. The sum $\sum_{j=0}^{p-1}e^{2\pi i n(\tau+j)/p}$ equals $p\cdot q^{n/p}$ when $p\mid n$ and $0$ otherwise, so the first set of orbits contributes $\sum_{p\mid n}a_n q^{n/p}=\sum_{n}a_{pn}q^n=\Lambda_p f(q)$. The last orbit contributes $p^{k-1}\chi_3(p)f(p\tau)=p^4\chi_3(p)\sum_n a_n q^{pn}=p^4\chi_3(p)\operatorname{Ver}_p f(q)$. Combining gives the displayed formula.

The same derivation applies on Laurent series with finite principal parts, since the orbit sums are linear in the input and the formulas $\Lambda_p$ and $\operatorname{Ver}_p$ extend coefficient-wise to Laurent series: if $f$ has principal part $\sum_{n=-m_\infty}^{-1}a_nq^n$, then $\Lambda_p f$ has principal part of length $\le \lceil m_\infty/p\rceil$, and $\operatorname{Ver}_p f$ has principal part of length $pm_\infty$. Hence $T_p$ sends weakly holomorphic forms to weakly holomorphic forms at the cusp $\infty$; by symmetric argument applied via the Fricke involution $w_3$, the same holds at the cusp $0$. (For an exposition of the weakly holomorphic version of the Hecke formalism applied to modular forms with finite principal parts, see \cite[\S 2]{Guerzhoy08}.)

Integrality over $\Z_{(p)}$ is immediate from the formula since $a_n\in\Z_{(p)}$ and the operations $\Lambda_p$, $\operatorname{Ver}_p$ preserve $\Z_{(p)}$-coefficients; commutation with reduction modulo $p^k$ follows since $T_p$ is a $\Z_{(p)}$-linear operator. At split primes $\chi_3(p)=1$.
\end{proof}

\begin{lemma}[Saturated weak $q$-expansion lattice]\label{lem:weak-q-saturation}
Let $R=\Z_p$, $p\ge 7$, and let $D$ be a horizontal effective Cartier divisor on $\mathcal X_0(3)_R$ supported on $P_\infty,P_0,P_-$. Put
\[
L_D:=H^0(\mathcal X_0(3)_R,\omega^5(\chi_3)(D)).
\]
If $s\in L_D\otimes_R\Q_p$ has Laurent $q$-expansion at $P_\infty$ in $R((q))$, then $s\in L_D$.
\end{lemma}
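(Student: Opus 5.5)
The plan is a $q$-expansion principle with saturation. Clearing the denominator, write $s=p^{-N}s_0$ with $s_0\in L_D$ and $N\ge 0$ minimal, and argue by contradiction: assume $N\ge 1$. The $q$-expansion of $s_0$ at $P_\infty$ is then $p^N$ times an element of $R((q))$, so it lies in $pR((q))$. Reducing modulo $p$, the image $\bar s_0\in H^0(\mathcal X_0(3)_{\Fp},\omega^5(\chi_3)(D))$ has vanishing $q$-expansion at $P_\infty$. If we can show $\bar s_0=0$ as a section on the special fibre, then $s_0\in pL_D$ and $N$ was not minimal. This establishes the claim.

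The key input is the $q$-expansion principle on the special fibre. For $p\ge 7$, the fibre $\mathcal X_0(3)_{\Fp}$ is smooth and geometrically irreducible, since its coarse space is $\mathbb P^1_u$ and $p\nmid 3$. The line bundle $\omega^5(\chi_3)(D)$ is invertible on this irreducible (stacky) curve. A section of an invertible sheaf on an integral curve that vanishes in the formal completion at one point vanishes identically, because the completed local ring injects into the function field. We use $R((q))$ only as the completion of the twisted local ring at the cusp. Horizontality of $D$ guarantees that twisting by $D$ commutes with reduction, so that $\omega^5(\chi_3)(D)\otimes\Fp$ is exactly the twist of the reduced bundle by $D_{\Fp}$.

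The step needing care is exactness: we need $L_D/pL_D\hookrightarrow H^0(\mathcal X_0(3)_{\Fp},\omega^5(\chi_3)(D)\otimes\Fp)$. This follows from the long exact sequence of multiplication by $p$ on the flat sheaf, since $H^0$ is left exact and the sheaf is $R$-flat. We also need that $s_0$ mod $p$ corresponds to its reduced $q$-expansion, which is compatibility of the Tate-curve expansion with base change. On the stack we may either work on the coarse $\mathbb P^1_u$ with the bundle descended, using the tame $\mu_3$ structure at $P_-$ from \Cref{lem:quotient-stack-chart}, or pass to the fine atlas $Y$ of \Cref{lem:katz-trace-descent}; I would use $Y$, where every component meets a cusp above $\infty$ after accounting for level. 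I expect this last point to be the main obstacle: the irreducibility/connectedness argument on $Y_{\Fp}$ must ensure that vanishing at the single cusp $P_\infty$ forces vanishing everywhere. On $\mathcal X_0(3)$ itself this is immediate from irreducibility of $\mathbb P^1_u$, so the plan is to argue directly there.
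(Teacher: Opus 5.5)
Your argument is correct and is essentially the paper's proof. You clear denominators with a minimal power of $p$ and use the multiplication-by-$p$ sequence on the $R$-flat invertible sheaf to get $L_D/pL_D\hookrightarrow H^0(\mathcal X_0(3)_{\Fp},\omega^5(\chi_3)(D)_{\Fp})$. You then conclude by the $q$-expansion principle on the integral special fibre at the non-stacky cusp $P_\infty$, so the detour through the atlas $Y$ is unnecessary. One small correction: the injections you need are the local ring into its completion (Krull) and the local ring into the function field; it is not the completed local ring that injects into the function field.
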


\begin{proof}
It is enough to prove that the $q$-expansion map
\[
L_D/pL_D\longrightarrow \Fp((q))
\]
is injective. Let $\mathcal L=\omega^5(\chi_3)(D)$. The special fibre of the rigidified stack is integral, with trivial generic stabilizer, and the cusp $P_\infty$ is a non-stacky smooth point. From the reduction exact sequence
\[
0\longrightarrow \mathcal L \xrightarrow{p} \mathcal L \longrightarrow \mathcal L_{\Fp}\longrightarrow 0
\]
the map $L_D/pL_D\hookrightarrow H^0(\mathcal X_{\Fp},\mathcal L_{\Fp})$ is injective. If $\bar s$ has zero Laurent $q$-expansion at $P_\infty$, then its germ in the completed local ring at $P_\infty$ vanishes. Since $\mathcal X_{\Fp}$ is integral and $\mathcal L_{\Fp}$ is invertible, $\bar s$ is zero as a rational section, hence $\bar s=0$. The stacky point $P_-$ only imposes the $\mu_3$-eigenspace condition in the local chart $[\operatorname{Spf}\Fp[[r]]/\mu_3]$ (the line $r^2\Fp[[r^3]]$ for weight $5$ and character $\chi_3$); this is a local restriction and does not produce sections supported at $P_-$ alone.

Now let $s\in L_D\otimes_R\Q_p$ have $q$-expansion in $R((q))$. Choose $a\ge 0$ minimal with $p^a s\in L_D$. If $a>0$, then the $q$-expansion of $p^a s$ is zero modulo $p$. By injectivity, $p^a s\equiv0\pmod p$ in $L_D/pL_D$, contradicting the minimality of $a$. Hence $a=0$, and $s\in L_D$.
\end{proof}

\begin{lemma}[Divisibility of the Hecke finite-difference numerator]\label{lem:divisibility-Nell}
Let $p\ge 7$ be split, let $f\in\{C_0,uC_0\}$, and let $\ell=1,2,3$. Define
\[
N_\ell(f):=\sum_{j=0}^{\ell}(-1)^{\ell-j}\binom{\ell}{j}t^jT_p\!\left(\frac{f}{t^{jp}}\right).
\]
Then there is a horizontal effective Cartier divisor $D_{\ell,p}$ supported on $P_\infty,P_0,P_-$ such that
\[
N_\ell(f)\in p^\ell H^0(\mathcal X_0(3)_{\Z_p},\omega^5(\chi_3)(D_{\ell,p})).
\]
Consequently $\mathfrak E_\ell(f):=N_\ell(f)/p^\ell$ is an integral weakly holomorphic weight-$5$, $\chi_3$ section with polar support contained in $\{P_\infty,P_0,P_-\}$.
\end{lemma}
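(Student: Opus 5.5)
The statement has two parts: the numerator is a weakly holomorphic section with controlled poles, and it is divisible by $p^\ell$. The plan is to prove these separately. Divisibility will be checked on $q$-expansions at $P_\infty$, and \Cref{lem:weak-q-saturation} will then promote it to divisibility in the integral lattice $L_D$.

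\emph{Step 1 (weakly holomorphic section).} Each $f/t^{jp}$ is a weakly holomorphic weight-$5$, $\chi_3$ form. Since $t=u/g^2$, its poles lie only at $P_\infty$ (from $u^{-jp}$), $P_0$ (since $t\to 0$ there as well, by $w_3$-invariance of $t$), and $P_-$ (from $g^{2jp}$ in the numerator, which is in fact a zero). By \Cref{lem:hecke-weakly-holomorphic}, $T_p$ preserves weakly holomorphic forms with finite principal parts. One caveat: the correspondence $(\pi_2)_*\pi_1^*$ can spread a pole at $P_-$ to the $T_p$-preimages of $P_-$. I therefore intend to take $D_{\ell,p}$ large enough at all three points and argue that poles elsewhere do not occur, because $t$ has no zeros or poles away from $P_\infty,P_0$. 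Multiplying by $t^j$ keeps the support in $\{P_\infty,P_0,P_-\}$. This gives $N_\ell(f)\in H^0(\mathcal X_0(3)_{\Z_p},\omega^5(\chi_3)(D))$ for a suitable horizontal $D$, and integrality follows from \Cref{lem:hecke-weakly-holomorphic}.

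\emph{Step 2 ($q$-expansion divisibility).} At split $p$ we have $T_p=\Lambda_p+p^4\operatorname{Ver}_p$. By \Cref{lem:sigma-p-linearity},
\[
t^j\Lambda_p(f t^{-jp})=\Lambda_p\bigl(f\,(\sigma_pt/t^p)^j\bigr)=\Lambda_p(f\nu_p^j).
\]
Summing with the binomial weights gives
\[
\sum_j(-1)^{\ell-j}\binom{\ell}{j}\Lambda_p(f\nu_p^j)=\Lambda_p\bigl(f(\nu_p-1)^\ell\bigr)=\Lambda_p(f\phi_p^\ell),
\]
which lies in $p^\ell\Z_{(p)}((q))$ since $\phi_p\in pq\Z_{(p)}[[q]]$. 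The Verschiebung part is
\[
p^4\sum_j(-1)^{\ell-j}\binom{\ell}{j}t^j\,\sigma_p(f)\,\sigma_p(t)^{-jp}.
\]
It is integral Laurent and divisible by $p^4$, hence by $p^\ell$ for $\ell\le3$. Therefore the $q$-expansion of $N_\ell(f)/p^\ell$ lies in $\Z_p((q))$. This also yields the specialization $\mathfrak E_\ell(f)\equiv\Lambda_p(f\phi_p^\ell/p^\ell)\pmod{p^{4-\ell}}$ recorded in the Notation.

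\emph{Step 3 (saturation).} The section $s=N_\ell(f)/p^\ell$ lies in $L_{D}\otimes\Q_p$ and has $q$-expansion in $\Z_p((q))$, so \Cref{lem:weak-q-saturation} gives $s\in L_{D}$, that is, $N_\ell(f)\in p^\ell L_{D_{\ell,p}}$.

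The main obstacle is Step 1, specifically controlling where $T_p$ puts poles and showing the result is a section over the integral model rather than merely generically. I would handle this by writing $f/t^{jp}$ as $f\cdot g^{2jp}u^{-jp}$, so its only polar divisor is cuspidal. Then $\pi_1^*$ has poles only over cusps of $X_0(3p)$, and $(\pi_2)_*$ maps those to cusps of $X_0(3)$. If this works, the $P_-$ component of $D_{\ell,p}$ can even be taken to be zero.
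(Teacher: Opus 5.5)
Your proposal is correct and takes the same route as the paper. Poles of $f/t^{jp}$ occur only at the cusps because $\operatorname{div}(t)=P_\infty+P_0-2P_-$. On $q$-expansions, $T_p=\Lambda_p+p^4\operatorname{Ver}_p$, and $\sigma_p$-linearity collapses the Cartier part to $\Lambda_p(f\phi_p^\ell)\in p^\ell\Z_p((q))$. \Cref{lem:weak-q-saturation} then promotes this to divisibility by $p^\ell$ in the integral lattice.

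Two of your side remarks need correcting. First, $t$ does have a pole at $P_-$ (order $2$ on the coarse curve); what you actually need is only that $t$ has no zeros away from the cusps. Second, because of the factors $t^j$, the $P_-$-component of $D_{\ell,p}$ cannot in general be taken to be zero. Only each individual $T_p(f/t^{jp})$ is holomorphic at $P_-$, and the resulting pole of $\mathfrak E_\ell(f)$ there is exactly what the later multiplication by $g$ and the pole estimate are designed to handle.
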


\begin{proof}
By \Cref{lem:level3-divisor-chart-C0}, the rational function $t=u/(1+27u)^2$ has divisor $P_\infty+P_0-2P_-$. Thus each $f/t^{jp}$ has only finite principal parts at the two cusps and no pole away from them. By \Cref{lem:hecke-weakly-holomorphic}, $T_p(f/t^{jp})$ is an integral weakly holomorphic section. Multiplication by $t^j$ can introduce only poles supported on $P_\infty,P_0,P_-$. Hence, for a sufficiently large horizontal effective Cartier divisor $D_{\ell,p}$ supported on these three points, $N_\ell(f)$ lies in the integral lattice
\[
L_{D_{\ell,p}}:=H^0(\mathcal X_0(3)_{\Z_p},\omega^5(\chi_3)(D_{\ell,p})).
\]

It remains to prove divisibility by $p^\ell$ in this lattice. On the $q$-expansion at $P_\infty$, use $T_p=\Lambda_p+p^4\operatorname{Ver}_p$ from \Cref{lem:hecke-weakly-holomorphic}. For the Cartier part, \Cref{lem:sigma-p-linearity} gives
\[
t^j\Lambda_p\!\left(\frac{f}{t^{jp}}\right)
=\Lambda_p\!\left(\sigma_p(t^j)\frac{f}{t^{jp}}\right)
=\Lambda_p(f\nu_p^j),
\]
where $\nu_p=t(q^p)/t(q)^p=1+\phi_p$. Therefore
\[
\sum_{j=0}^{\ell}(-1)^{\ell-j}\binom{\ell}{j}
t^j\Lambda_p\!\left(\frac{f}{t^{jp}}\right)
=\Lambda_p(f(\nu_p-1)^\ell)=\Lambda_p(f\phi_p^\ell).
\]
Since $\phi_p\in pq\Z_p[[q]]$, this Laurent expansion lies in $p^\ell\Z_p((q))$.

The Verschiebung part is
\[
p^4\sum_{j=0}^{\ell}(-1)^{\ell-j}\binom{\ell}{j}t(q)^j\operatorname{Ver}_p\!\left(\frac{f}{t^{jp}}\right),
\]
which is $p^4$ times an integral Laurent series, hence lies in $p^\ell\Z_p((q))$ for $\ell=1,2,3$. Thus the $q$-expansion of $N_\ell(f)/p^\ell$ lies in $\Z_p((q))$. Applying \Cref{lem:weak-q-saturation} to $N_\ell(f)/p^\ell\in L_{D_{\ell,p}}\otimes\Q_p$ proves the asserted global divisibility.
\end{proof}

\begin{lemma}[Local Cartier--Hecke comparison on the ordinary locus]\label{lem:cartier-hecke-local}
Let $p\ge 7$ be split, let $f\in\{C_0,uC_0\}$, and let $\ell=1,2,3$. Put
\[
\mathfrak E_\ell(f):=\frac{N_\ell(f)}{p^\ell},
\]
where the quotient is the integral global weak section supplied by \Cref{lem:divisibility-Nell}. For $f=C_0$ this is the finite difference $\mathfrak E_\ell$ of \eqref{eq:hecke-fd}; for $f=uC_0$ write $\mathfrak E'_\ell$.
Let $\nu_\Phi:=\Phi^*t/t^p$ and $\psi:=(\nu_\Phi-1)/p$. As meromorphic weight-$5$, $\chi_3$ sections on the ordinary locus, the identity
\[
\mathfrak E_\ell(f)-\mathcal C_{\Phi,5}(f\psi^\ell)=p^{4-\ell}\,\mathcal V_\ell(f)
\]
holds, where
\[
\mathcal V_\ell(f):=\sum_{j=0}^\ell(-1)^{\ell-j}\binom{\ell}{j}t^j\operatorname{Ver}_p(f/t^{jp})
\]
is a meromorphic weight-$5$, $\chi_3$ section on the ordinary locus with $\Z_{(p)}$-integral coefficients. In particular, on the ordinary locus,
\[
\mathfrak E_\ell(f)\equiv\mathcal C_{\Phi,5}(f\psi^\ell)\pmod{p^{4-\ell}}.
\]
On $q$-expansions at the cusp $\infty$ this reads
\[
\mathfrak E_\ell(f)\equiv\Lambda_p\!\left(f\,\frac{\phi_p^\ell}{p^\ell}\right)\pmod{p^{4-\ell}},
\]
because $\nu_\Phi(q)=t(q^p)/t(q)^p$ and $\phi_p=\nu_\Phi-1$ on the Tate chart.
\end{lemma}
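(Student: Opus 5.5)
The identity is a bookkeeping consequence of the Hecke formula in \Cref{lem:hecke-weakly-holomorphic} together with the description of the descended Katz trace in \Cref{lem:trace-operator-w5}. I will split $T_p$ into its Cartier and Verschiebung parts inside the numerator $N_\ell(f)$, identify the Cartier part with $p^\ell\,\mathcal C_{\Phi,5}(f\psi^\ell)$, and show that what remains is exactly $p^4\mathcal V_\ell(f)$. Dividing by $p^\ell$ then gives the stated identity with error $p^{4-\ell}\mathcal V_\ell(f)$.

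\textbf{Step 1 (splitting $T_p$).} At split primes, on the ordinary locus, I write $T_p=\mathcal C_{\Phi,5}+p^4\operatorname{Ver}_p$ as operators on meromorphic weight-$5$, $\chi_3$ sections. Here $\operatorname{Ver}_p$ denotes the descended pullback along the canonical Frobenius lift, normalized in weight $5$. This is Katz's geometric description of $T_p$ on the ordinary locus: the correspondence $X_0(3p)^{\rm ord}$ has two components, the canonical-subgroup (multiplicative) component giving the normalized trace and the étale component giving the Frobenius pullback. The factor $\chi_3(p)=1$ removes any twist. On the Tate chart this specializes to the $q$-expansion formula of \Cref{lem:hecke-weakly-holomorphic}, using the last part of \Cref{lem:trace-operator-w5}. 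Since both sides are meromorphic sections agreeing on $q$-expansions at $P_\infty$, and the ordinary locus is connected with integral fibres, I can also argue the identity through $q$-expansion injectivity.

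\textbf{Step 2 (Cartier part).} Next I use the $\Phi$-linearity $t^j\,\mathcal C_{\Phi,5}(h)=\mathcal C_{\Phi,5}\bigl((\Phi^*t)^j h\bigr)$, the geometric version of \Cref{lem:sigma-p-linearity}, which follows from \eqref{eq:CPhi5-def} because $\Tr_{A/\Phi^*A}$ is $\Phi^*A$-linear. This gives
\[
\sum_{j=0}^{\ell}(-1)^{\ell-j}\binom{\ell}{j}t^j\,\mathcal C_{\Phi,5}\!\left(\frac{f}{t^{jp}}\right)=\mathcal C_{\Phi,5}\bigl(f(\nu_\Phi-1)^\ell\bigr)=p^\ell\,\mathcal C_{\Phi,5}(f\psi^\ell).
\]
The Verschiebung terms sum to $p^4\mathcal V_\ell(f)$ by definition. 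Dividing $N_\ell(f)$ by $p^\ell$ then yields the identity.

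\textbf{Step 3 (integrality and congruence).} Integrality of $\mathcal V_\ell(f)$ on $q$-expansions is clear, since $\operatorname{Ver}_p$ and multiplication by $t^j$ preserve $\Z_{(p)}$-coefficients. The congruence modulo $p^{4-\ell}$ is then immediate. On the Tate chart, $\Phi^*t=t(q^p)$, so $\nu_\Phi(q)=t(q^p)/t(q)^p$ and $\psi=\phi_p/p$. Combined with $\mathcal C_{\Phi,5}=\Lambda_p$ from \Cref{lem:trace-operator-w5}, this gives the $q$-expansion form of the congruence.

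\textbf{Main obstacle.} The hardest step is Step 1: justifying $T_p=\mathcal C_{\Phi,5}+p^4\operatorname{Ver}_p$ as an identity of sections on the whole ordinary locus, not merely on $q$-expansions, including near the stacky point $P_-$. My first approach is the $q$-expansion principle. Both sides are rational sections over the ordinary locus, where $\Phi$ is defined by the descent in \Cref{lem:katz-trace-descent}, and they agree on the formal neighborhood of $P_\infty$. Since the rigidified special fibre is integral, as used in \Cref{lem:weak-q-saturation}, the two sides coincide.
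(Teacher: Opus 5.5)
Your proposal is correct and follows essentially the same route as the paper. You decompose $T_p=\mathcal C_{\Phi,5}+p^4\operatorname{Ver}_p$ on the ordinary locus (with $\chi_3(p)=1$), then use target-linearity $\mathcal C_{\Phi,5}((\Phi^*a)h)=a\,\mathcal C_{\Phi,5}(h)$ to collapse the binomial sum to $p^\ell\,\mathcal C_{\Phi,5}(f\psi^\ell)$, and finally read off the $q$-expansion form from $\Phi^*t=t(q^p)$ and $\mathcal C_{\Phi,5}=\Lambda_p$ on the Tate chart. The one difference is that the paper justifies the $T_p$-decomposition by citing Katz's construction descended through the fine atlas, whereas you add a $q$-expansion-principle argument on the connected ordinary locus; this is a legitimate alternative.
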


\begin{proof}
Target-linearity of the trace: if $a$ is a meromorphic function on the target and $h$ a meromorphic weight-$5$ section on the source, then $\mathcal C_{\Phi,5}((\Phi^*a)h)=a\mathcal C_{\Phi,5}(h)$, because $\Tr_{A/\Phi^*A}((\Phi^*a)b)=a\Tr_{A/\Phi^*A}(b)$. By Katz's construction of the ordinary $U$-operator as normalized trace \cite[\S 3.11]{Katz-padic}, pulled back to the fine atlas $Y$ and descended by \Cref{lem:katz-trace-descent}, the classical prime-to-$3$ Hecke operator decomposes on the ordinary locus as
\[
T_p=\mathcal C_{\Phi,5}+\chi_3(p)p^4\operatorname{Ver}_p,
\]
where $\operatorname{Ver}_p$ is the Verschiebung operator with $q$-expansion $h(q)\mapsto h(q^p)$. At split primes $\chi_3(p)=1$. Expanding $\psi^\ell=p^{-\ell}\sum_{j=0}^\ell(-1)^{\ell-j}\binom{\ell}{j}(\Phi^*t)^j/t^{jp}$, applying $\mathcal C_{\Phi,5}$ and using target-linearity gives
\[
\mathcal C_{\Phi,5}(f\psi^\ell)=\frac{1}{p^\ell}\sum_{j=0}^\ell(-1)^{\ell-j}\binom{\ell}{j}t^j\,\mathcal C_{\Phi,5}\!\left(\frac{f}{t^{jp}}\right).
\]
Replacing $\mathcal C_{\Phi,5}$ by $T_p-p^4\operatorname{Ver}_p$ gives
\[
\mathcal C_{\Phi,5}(f\psi^\ell)=\mathfrak E_\ell(f)-p^{4-\ell}\mathcal V_\ell(f),
\]
which is the displayed local identity. $\mathcal V_\ell(f)$ has integral coefficients since $\operatorname{Ver}_p$ acts by $q^p$-substitution preserving $\Z_{(p)}$-coefficients, and each $t^j$ is integral on the open subscheme where $t$ is regular (and at worst has poles at $P_0$, $P_-$, controlled later). By \Cref{lem:trace-operator-w5}, the $q$-expansion of $\mathcal C_{\Phi,5}$ at $\infty$ is $\Lambda_p$, and on that chart $\nu_\Phi(q)=t(q^p)/t(q)^p$, so $\nu_\Phi-1=\phi_p$. This gives the displayed $q$-expansion form.
\end{proof}

\begin{lemma}[Holomorphy transfer for the finite differences]\label{lem:holomorphy-transfer}
Let $f\in\{C_0,uC_0\}$ and $\ell\in\{1,2,3\}$, and set $S:=\Z_p/p^{4-\ell}\Z_p$. Then $g\,\mathfrak E_\ell(f)$ reduces to an element of $M_5(\Gamma_0(3),\chi_3)\otimes S$, where $g=1+27u$.
\end{lemma}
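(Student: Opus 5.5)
The strategy is to show that the weakly holomorphic section $g\,\mathfrak E_\ell(f)$, which by \Cref{lem:divisibility-Nell} is integral with polar support in $\{P_\infty,P_0,P_-\}$, has no poles modulo $p^{4-\ell}$ at any of the three points. The saturation \Cref{lem:weak-q-saturation}, applied over $S=\Z_p/p^{4-\ell}$, then identifies its reduction with a section of $\omega^5(\chi_3)$ itself, that is, an element of $M_5(\Gamma_0(3),\chi_3)\otimes S$. I would handle the three points separately, in increasing order of difficulty.

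\emph{Cusp $\infty$.} By \Cref{lem:cartier-hecke-local}, modulo $p^{4-\ell}$ the $q$-expansion of $\mathfrak E_\ell(f)$ is $\Lambda_p(f\phi_p^\ell/p^\ell)$. Since $\phi_p\in pq\Z_{(p)}[[q]]$ and $f\in\Z[[q]]$, this lies in $\Z_p[[q]]$. The factor $g=1+27u$ is a unit at $P_\infty$, so $g\,\mathfrak E_\ell(f)$ is regular there.

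\emph{Cusp $0$.} I would transport to $\infty$ with the Fricke involution $w_3$. Since $t$ is $w_3$-invariant, $w_3^*$ exchanges $C_0$ and $uC_0$ up to the constants $-27$ and $-1/27$ (\Cref{rem:fricke-trace-line}). The $\chi_3$-twisted relation $T_pw_3^*=\chi_3(p)w_3^*T_p$ holds with $\chi_3(p)=1$, so $w_3^*N_\ell(f)$ is a constant multiple of $N_\ell(f')$ for the other $f'\in\{C_0,uC_0\}$. Regularity at $P_0$ is therefore regularity of $N_\ell(f')$ at $\infty$, which the previous step gives. Both $g$ and $w_3^*g=1+1/(27u)$ are regular at $P_0$ up to a factor of $u^{-1}$. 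That factor is a unit at $P_0$, but the bookkeeping should be checked explicitly, since $g$ could introduce a simple pole there. If it does, I would use instead that $C_0$ vanishes to order $1$ at $P_0$ (\Cref{lem:prelim-divisor-chart}).

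\emph{Elliptic point $P_-$, the main step.} Here $t$ has a double pole: $t=u/r^6$ in stack order, so $t$ has stack order $-6$. Hence $f/t^{jp}$ lies in $r^{2+6jp}\Z_p[\zeta_3][[r^3]]e$, which is holomorphic. The multiplication by $t^j$ creates the pole. I would use the local identity $\mathfrak E_\ell(f)\equiv\mathcal C_{\Phi,5}(f\psi^\ell)\pmod{p^{4-\ell}}$ on the ordinary locus; for split $p$ this includes $P_-$ (\Cref{rem:P-minus-ordinary}). I would expand
\[
\psi=\frac{\Phi^*t/t^p-1}{p}
\]
in the chart. Using $t=u\,r^{-6}$ and $\Phi^*r=r^p(1+X)$ from \eqref{eq:Phi-local}, write $\psi^\ell$ as a sum over $s$ of terms $p^{s-\ell}r^{s(1-p)}\cdot(\text{elements of }\Z_p[\zeta_3][[r^3]])$. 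The terms with $s<\ell$ should be absorbed by the binomial cancellation built into the $\ell$-th finite difference. Then $f\psi^\ell$ lies in the span of the modules $p^{s-\ell}r^{2+s(1-p)}\Z_p[\zeta_3][[r^3]]e$ for $\ell\le s\le 3$, up to $O(p^{4-\ell})$. \Cref{prop:pole-estimate-star} then shows that modulo $p^{4-\ell}$ the trace has at most an $r^{-1}$ principal part in the $\chi_3$-isotypic line. Because $g=r^3$ and sections of $\omega^5(\chi_3)$ must lie in $r^2\Z_p[\zeta_3][[r^3]]e$, multiplying by $g$ sends $r^{-1}$ to $r^2$, which is holomorphic.

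The obstacle is the middle bookkeeping: showing that $\psi^\ell$ decomposes into exactly the shape required by \eqref{eq:star}, with the $p^{s-\ell}$ weights and no stray terms with $s>3$ beyond $p^{4-\ell}$. The plan is to write $\Phi^*t/t^p=(\Phi^*u/u^p)\,(1+X)^{-6}$. The factor $\Phi^*u/u^p\equiv1$ is a unit congruence, and $X$ has the $\alpha_i$ expansion. Expanding binomially, the $X^b$ term carries $p^b r^{b(1-p)}$, and truncating at total valuation $4-\ell$ leaves $s\le3$.

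With regularity established at all three points, $g\,\mathfrak E_\ell(f)\bmod p^{4-\ell}$ is a global section of $\omega^5(\chi_3)$ over $S$, and the conclusion follows.
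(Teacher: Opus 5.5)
Your overall plan matches the paper's. You take the integral divided section from \Cref{lem:divisibility-Nell} and check regularity of $g\,\mathfrak E_\ell(f)$ modulo $p^{4-\ell}$ at $P_\infty$, $P_0$, $P_-$. Your $P_-$ step runs through \Cref{lem:psi-layer-decomposition}, \Cref{prop:pole-estimate-star} and $g=r^3$, exactly as in \Cref{lem:pole-bound-El}.

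At $P_0$ you use Fricke transport instead of the paper's direct computation on the Fricke--Tate chart (\Cref{lem:cartier-cusp0-expansion,lem:cusp-0-regularity-basis}). That is a legitimate alternative, since \Cref{lem:atkin-lehner-intertwining} does not depend on the present lemma. It also saves you from identifying the descended trace with the $q_0$-Cartier operator.

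However, the cusp-$0$ step as written has a genuine gap. The factor $g=1+27u$ has an honest simple pole at $P_0$, because $\ord_{P_0}(u)=-1$ (\Cref{lem:prelim-divisor-chart}). So $u^{-1}$ is a uniformizer at $P_0$, not a unit, and the case you hedge against (``if it does'') is the actual one. Regularity of $\mathfrak E_\ell(f)$ at $P_0$ is therefore not enough; you need it to vanish there modulo $p^{4-\ell}$. Your fallback, the zero of $C_0$ at $P_0$, only handles $f=C_0$. For $f=uC_0$, which is a unit at $P_0$, it gives nothing.

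The missing input is already in your cusp-$\infty$ step, which you understated. Since $\phi_p\in pq\Z_{(p)}[[q]]$, one has $\phi_p^\ell/p^\ell\in q^\ell\Z_{(p)}[[q]]$, hence $f'\phi_p^\ell/p^\ell\in q\Z_p[[q]]$. Because $\Lambda_p$ preserves $q\Z_p[[q]]$, the reduction of $\mathfrak E_\ell(f')$ modulo $p^{4-\ell}$ lies in $q\Z_p[[q]]\otimes S$ for both choices of $f'$, not merely in $\Z_p[[q]]\otimes S$. In your transport, $w_3^*\bigl(g\,\mathfrak E_\ell(f)\bigr)=c\,\frac{g}{27u}\,\mathfrak E_\ell(f')$ with $c\in\{-27,-1/27\}$. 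It is precisely this vanishing at $q=0$ that absorbs the simple pole of $w_3^*g=g/(27u)$ at $\infty$. This is the same mechanism as the paper's $\phi_{p,0}\in pq_0\Z_p[[q_0]]$ in \Cref{lem:cusp-0-regularity-basis}.

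Three smaller corrections:
\begin{itemize}
\item The final step is not an application of \Cref{lem:weak-q-saturation}, which concerns $\Z_p$-lattices in $L_D\otimes\Q_p$ and says nothing over $S$. What you need is that a section of $\omega^5(\chi_3)(D)$ over $S$ whose principal parts vanish at the three points lies in the subsheaf $\omega^5(\chi_3)$. Then \Cref{lem:M5-integral-mod-pN} identifies the global sections with $M_5\otimes S$.
\item At $P_-$ no terms with $s<\ell$ occur, so nothing needs to be absorbed by a binomial cancellation. Writing $\psi\equiv A_1+pA_2+p^2A_3\pmod{p^3}$, each of the $\ell$ factors of $\psi^\ell$ contributes at least $1$ to $s$.
\item The factor $\Phi^*u/u^p=27^{p-1}\bigl(r^{3p}(1+X)^3-1\bigr)/(r^3-1)^p$ depends on $X$. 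It must be expanded in layers just like $(1+X)^{-6}$, not treated as simply $\equiv 1$; it produces terms of the same shape.
\end{itemize}
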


\begin{proof}
\emph{Step 1: Global divided section.} Define the numerator
\[
N_\ell(f):=\sum_{j=0}^{\ell}(-1)^{\ell-j}\binom{\ell}{j}t^jT_p\!\left(\frac{f}{t^{jp}}\right).
\]
By \Cref{lem:divisibility-Nell}, there is an effective divisor $D_{\ell,p}$ supported on $P_\infty,P_0,P_-$ such that
\[
N_\ell(f)\in p^\ell H^0(\mathcal X_0(3)_{\Z_p},\omega^5(\chi_3)(D_{\ell,p})).
\]
We therefore define
\[
\mathfrak E_\ell(f):=N_\ell(f)/p^\ell
\]
as an integral global weakly holomorphic section. Its polar support is contained in $\{P_\infty,P_0,P_-\}$.

\emph{Step 2: Local holomorphy of $g\,\mathfrak E_\ell(f)$ at $P_\infty$.} On the Tate chart at $\infty$, $\mathfrak E_\ell(f)\equiv\Lambda_p(f\,\phi_p^\ell/p^\ell)\pmod{p^{4-\ell}}$ by \Cref{lem:cartier-hecke-local}. By \Cref{lem:exp-layers,def:nu-phi}, $\phi_p^\ell/p^\ell\in q^\ell\Z_{(p)}[[q]]$, hence $f\phi_p^\ell/p^\ell\in q\,\Z_p[[q]]$, hence $\Lambda_p$ of this lies in $q\,\Z_p[[q]]$, hence $g\,\mathfrak E_\ell(f)\in q\,\Z_p[[q]]\otimes S$ at the cusp $\infty$.

\emph{Step 3: Local holomorphy of $g\,\mathfrak E_\ell(f)$ at $P_0$.} On the Fricke--Tate chart at $0$, by \Cref{lem:cusp-0-regularity-basis}, $\mathcal C_{\Phi,5}(f\psi^\ell)\in q_0\,\Z_p[[q_0]]$. By \Cref{lem:cartier-hecke-local}, the local identity $\mathfrak E_\ell(f)\equiv\mathcal C_{\Phi,5}(f\psi^\ell)\pmod{p^{4-\ell}}$ holds on the cusp-$0$ Tate chart (which is part of the ordinary locus since $p\nmid 3$). Multiplying by $g\in\Z_p[[q_0]]\cdot 27u^{-1}+\Z_p[[q_0]]$ (a function with at worst a simple pole at $P_0$) yields a series in $\Z_p[[q_0]]\otimes S$. The explicit computation in \Cref{lem:cusp-0-regularity-basis} confirms this.

\emph{Step 4: Local holomorphy of $g\,\mathfrak E_\ell(f)$ at $P_-$.} By \Cref{lem:pole-bound-El}, $\operatorname{pole}^{\rm st}_{P_-}(\mathfrak E_\ell(f))\le 1\pmod{p^{4-\ell}}$ for both $f=C_0$ and $f=uC_0$. Since $g$ has a simple zero at $P_-$ (i.e., $\operatorname{ord}^{\rm st}_{P_-}(g)=3$ on the stack and $\ord_{P_-}(g)=1$ on the coarse curve), $g\,\mathfrak E_\ell(f)$ has $\operatorname{pole}^{\rm st}_{P_-}\le 0$ modulo $p^{4-\ell}$, i.e., is regular at $P_-$.

\emph{Conclusion.} Steps 2--4 establish that the polar divisor of $g\,\mathfrak E_\ell(f)$ modulo $p^{4-\ell}$ is empty at each of $P_\infty$, $P_0$, $P_-$. By Step 1 these are the only possible poles. Therefore $g\,\mathfrak E_\ell(f)$ is globally holomorphic of weight $5$ and character $\chi_3$ modulo $p^{4-\ell}$, i.e., it represents an element of $M_5(\Gamma_0(3),\chi_3)\otimes S$.
\end{proof}

\begin{lemma}[Cusp-$0$ regularity of $\phi_p^\ell$]\label{lem:cusp-0-regularity}
Let $q_0$ be the Fricke--Tate parameter at the cusp $0$, normalized so that $w_3^*q=q_0$. Then $t=q_0\eta_0(q_0)$ with $\eta_0(q_0)\in 1+q_0\Z_p[[q_0]]$, and Katz's canonical Frobenius satisfies $\Phi^*q_0=q_0^p$ on the completed local ring at cusp $0$. Consequently
\[
\phi_p=\frac{\Phi^*t}{t^p}-1\in pq_0\Z_p[[q_0]],
\]
and therefore $\phi_p^\ell/p^\ell\in q_0^\ell\Z_p[[q_0]]$ for $\ell\ge 1$.
\end{lemma}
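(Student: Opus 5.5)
The plan has three steps: compute $t$ in the parameter $q_0$, identify the Frobenius at the cusp $0$, and then carry out the same integrality argument used at $\infty$.

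\emph{Step 1: expand $t$ at the cusp $0$.} The parameter $t=u/(1+27u)^2$ is $w_3$-invariant (\Cref{rem:fricke-trace-line}). Hence $w_3^*t=t$. Because $w_3^*q=q_0$, the expansion of $t$ in $q_0$ is the same series as its expansion in $q$ at $\infty$, namely $t=q/H_{\mathrm{mix}}(q)$ with $q$ replaced by $q_0$. From \Cref{thm:modular}(iv),
\[
H_{\mathrm{mix}}(q)=\prod_{3\nmid n}(1-q^n)^{12}(1+27u)^2\in 1+q\Z[[q]].
\]
Its inverse $\eta_0$ therefore lies in $1+q_0\Z[[q_0]]$, which gives $t=q_0\eta_0(q_0)$.

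\emph{Step 2: Frobenius at the cusp $0$.} I will identify $\Phi^*q_0=q_0^p$. The chart at $0$ is the Tate curve $\operatorname{Tate}(q_0)$ carrying the étale $\Gamma_0(3)$-structure (the Fricke image of the $\mu_3$-structure). Its canonical subgroup is still $\mu_p\subset\operatorname{Tate}(q_0)$, since that subgroup depends only on $E$ (\Cref{lem:mu3-preserves-canonical-subgroup}). The quotient is $\operatorname{Tate}(q_0^p)$, and the image of the level-$3$ subgroup is again the étale one; this uses $(p,3)=1$. So the descended lift of \Cref{lem:katz-trace-descent} sends $q_0\mapsto q_0^p$, exactly as in the computation at $\infty$ in \Cref{lem:trace-operator-w5}. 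Equivalently, $w_3$ commutes with the canonical Frobenius, because the canonical subgroup is prime to $3$ and so is preserved when one passes to $E/C_3$. I expect this step to be the main obstacle. The difficulty is justifying that the Fricke transport respects the canonical subgroup and the chosen normalization of $q_0$, since a unit rescaling of $q_0$ would spoil the exact equality $\Phi^*q_0=q_0^p$. To handle it, I will invoke functoriality of Katz's construction under the prime-to-$p$ isogeny $E\to E/C_3$.

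\emph{Step 3: integrality of $\phi_p$.} With $\Phi^*q_0=q_0^p$,
\[
\frac{\Phi^*t}{t^p}=\frac{\eta_0(q_0^p)}{\eta_0(q_0)^p}.
\]
Since $\eta_0$ has $\Z_p$-coefficients, we have $\eta_0(q_0)^p\equiv\eta_0(q_0^p)\pmod p$. Both series lie in $1+q_0\Z_p[[q_0]]$, so the ratio lies in $1+pq_0\Z_p[[q_0]]$. Thus $\phi_p\in pq_0\Z_p[[q_0]]$, and raising to the $\ell$-th power and dividing by $p^\ell$ gives $\phi_p^\ell/p^\ell\in q_0^\ell\Z_p[[q_0]]$.
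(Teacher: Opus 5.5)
Your proposal is correct and follows essentially the same route as the paper. You use $w_3$-invariance of $t$ to get $t=q_0\eta_0(q_0)$, the Tate-curve quotient by $\mu_p$ to get $\Phi^*q_0=q_0^p$, and coefficientwise Frobenius on $\eta_0$ to get $\phi_p\in pq_0\Z_p[[q_0]]$.

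Your extra observation that $\Phi$ commutes with $w_3$ is worth keeping. It holds because the prime-to-$p$ isogeny $E\to E/C_3$ carries the canonical subgroup to the canonical subgroup, and it pins down the exact normalization $\Phi^*q_0=q_0^p$ from $\Phi^*q=q^p$. The paper's proof only asserts this via the Fricke--Tate chart description.
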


\begin{proof}
At $\infty$ one has $u=q+O(q^2)$ and $t=u/(1+27u)^2=q+O(q^2)$. The Fricke involution exchanges $\infty$ and $0$, and $t$ is $w_3$-invariant. Hence, in the Fricke--Tate parameter $q_0=w_3^*q$ at cusp $0$, $t=q_0\eta_0(q_0)$ with $\eta_0(q_0)\in 1+q_0\Z_p[[q_0]]$. The cusp $0$ is described by the Tate curve $\operatorname{Tate}(q_0)$ in the Fricke chart with its transported $\Gamma_0(3)$-subgroup. Since $p\nmid 3$, the canonical subgroup of $\operatorname{Tate}(q_0)$ is $\mu_p$, and $\operatorname{Tate}(q_0)/\mu_p\simeq\operatorname{Tate}(q_0^p)$. The $\Gamma_0(3)$-subgroup transports isomorphically. Thus Katz's canonical Frobenius is $q_0\mapsto q_0^p$ on the completed local ring at cusp $0$. Now compute
\[
\frac{\Phi^*t}{t^p}=\frac{q_0^p\,\eta_0(q_0^p)}{q_0^p\,\eta_0(q_0)^p}=\frac{\eta_0(q_0^p)}{\eta_0(q_0)^p}.
\]
Since $\eta_0(q_0)\in 1+q_0\Z_p[[q_0]]$, coefficientwise Frobenius gives $\eta_0(q_0)^p\equiv\eta_0(q_0^p)\pmod p$. Therefore $\Phi^*t/t^p\equiv 1\pmod p$. Its constant term is also $1$, so $\phi_p\in p\Z_p[[q_0]]\cap q_0\Z_p[[q_0]]=pq_0\Z_p[[q_0]]$. Raising to the $\ell$-th power and dividing by $p^\ell$ gives $\phi_p^\ell/p^\ell\in q_0^\ell\Z_p[[q_0]]$.
\end{proof}

\begin{lemma}[Cartier expansion at the cusp $0$]\label{lem:cartier-cusp0-expansion}
On the Fricke--Tate chart at the cusp $0$, with parameter $q_0$, the normalized weight-$5$ Katz trace attached to $\Phi$ is the Cartier operator
\[
\mathcal C_{\Phi,5}\!\left(\sum_{n\gg-\infty}a_nq_0^n\right)=\sum_{n\gg-\infty}a_{pn}q_0^n.
\]
In particular it preserves the ideal $q_0\Z_p[[q_0]]$. Moreover, on this chart
\[
\psi=\frac{\Phi^*t/t^p-1}{p}=\frac{1}{p}\!\left(\frac{t(q_0^p)}{t(q_0)^p}-1\right).
\]
\end{lemma}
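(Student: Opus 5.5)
The plan is to reduce both assertions to the local structure of the Tate curve at the cusp $0$, exactly parallel to \Cref{lem:trace-operator-w5} at $\infty$. First we identify the completed local ring of $\mathcal X_0(3)$ at $P_0$. By \Cref{lem:cusp-0-regularity}, the Fricke chart is described by $\operatorname{Tate}(q_0)$ with its transported $\Gamma_0(3)$-subgroup. Since $p\nmid 3$, the canonical subgroup is $\mu_p$, and the quotient is $\operatorname{Tate}(q_0)/\mu_p\simeq\operatorname{Tate}(q_0^p)$. Hence $\Phi^*q_0=q_0^p$, and the relevant ring extension is $\Z_p((q_0^p))\subset\Z_p((q_0))$. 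It is free with basis $1,q_0,\dots,q_0^{p-1}$.

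Next we compute the Jacobian $J_\Phi$ on this chart. Use the canonical differential $\omega_0=dT/T$ of $\operatorname{Tate}(q_0)$. The quotient by $\mu_p$ pulls $\omega_0'$ back to $p\,\omega_0$, so in weight $5$ we get $\Phi^*e'=p^5e$ with $e=\omega_0^{\otimes5}$, that is, $J_\Phi=1$. One point must be checked: the weight-$5$, $\chi_3$ line bundle is trivialized at $P_0$ by $\omega_0^{\otimes 5}$, with the character twist contributing only a constant. This holds because the $\Gamma_0(3)$-structure is transported isomorphically and $\chi_3$ is evaluated at prime-to-$p$ data. Consequently any constant twist is a unit, it appears identically on source and target, and it cancels in the formula \eqref{eq:CPhi5-def}.

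With $J_\Phi=1$, the formula \eqref{eq:CPhi5-def} becomes $\mathcal C_{\Phi,5}(fe)=p^{-1}\Tr(f)e'$. The trace computation from the proof of \Cref{lem:trace-operator-w5} gives $\Tr(q_0^n)=p\,q_0^{n/p}$ when $p\mid n$ and $0$ otherwise. It follows that $\mathcal C_{\Phi,5}$ acts as $\sum a_nq_0^n\mapsto\sum a_{pn}q_0^n$, and in particular it preserves $q_0\Z_p[[q_0]]$, since $a_{pn}=0$ for $n\le0$. For the formula for $\psi$, write $t=q_0\eta_0(q_0)$ as in \Cref{lem:cusp-0-regularity}. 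Pullback by $\Phi^*$ acts by $q_0\mapsto q_0^p$ on functions, so $\Phi^*t=t(q_0^p)$, and substituting into the definition of $\psi$ gives the displayed expression.

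The main obstacle is the trivialization step at $P_0$. We must confirm that the Fricke identification is compatible with the descended $\Phi$ of \Cref{lem:katz-trace-descent}, so that this is the same global operator and not merely a conjugate. The approach will be to argue via the fine atlas $Y$ and the functoriality of the canonical subgroup under isomorphisms of Tate objects, as in the descent argument. Note also that the Fricke involution $w_3$ itself commutes with $\Phi$, since $w_3$ is the quotient by $C_3$, which has order prime to $p$, and is therefore compatible with quotienting by $C_p$.
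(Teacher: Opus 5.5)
Your proposal is correct and follows the paper's proof essentially step for step. You get $\Phi^*q_0=q_0^p$ from \Cref{lem:cusp-0-regularity}, $J_\Phi=1$ from $\Phi^*\omega'=p\,\omega$ for $\omega=dT/T$, and the trace formula in the basis $1,q_0,\dots,q_0^{p-1}$, then read $\psi$ off the definition. The compatibility of the Fricke chart with the descended $\Phi$ that you flag as the main obstacle is what the paper takes directly from \Cref{lem:cusp-0-regularity}, so you need nothing beyond that citation.
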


\begin{proof}
By \Cref{lem:cusp-0-regularity}, Katz's canonical Frobenius is $q_0\mapsto q_0^p$ on the completed local ring at the cusp $0$. The same Tate-curve calculation as in \Cref{lem:trace-operator-w5} applies in the Fricke--Tate coordinate: for the invariant differential $\omega=dT/T$, the quotient by the canonical subgroup satisfies $\Phi^*\omega'=p\omega$, so the weight-$5$ normalization contributes the same factor as at $\infty$. Thus the normalized trace is
\[
\frac{1}{p}\Tr_{\Z_p((q_0))/\Z_p((q_0^p))}.
\]
For $Q_0=q_0^p$, the extension $\Z_p((Q_0))\subset\Z_p((q_0))$ has basis $1,q_0,\ldots,q_0^{p-1}$, and hence
\[
\Tr(q_0^n)=\begin{cases}p\,q_0^{n/p},&p\mid n,\\ 0,&p\nmid n.\end{cases}
\]
Dividing by $p$ gives the displayed Cartier formula, and preservation of $q_0\Z_p[[q_0]]$ is immediate since $pn\ge p\ge 7>0$ forces $n\ge 1$ in any nonzero output term. The formula for $\psi$ is the definition $\psi=(\Phi^*t/t^p-1)/p$ together with $\Phi^*q_0=q_0^p$.
\end{proof}

\begin{lemma}[Cusp-$0$ regularity on the two Eisenstein lines]\label{lem:cusp-0-regularity-basis}
Let $f\in\{C_0,uC_0\}$ and $\ell\in\{1,2,3\}$. Set $\phi_{p,0}:=t(q_0^p)/t(q_0)^p-1$. On the Fricke--Tate chart at the cusp $0$,
\[
f\cdot\frac{\phi_{p,0}^\ell}{p^\ell}\in q_0\,\Z_p[[q_0]].
\]
Consequently the normalized Katz trace satisfies $\mathcal C_{\Phi,5}(f\psi^\ell)\in q_0\,\Z_p[[q_0]]$ on the cusp-$0$ Tate chart, and therefore $g\,\mathcal C_{\Phi,5}(f\psi^\ell)$ is holomorphic at the cusp $0$, where $g=1+27u$ as in the Notation.
\end{lemma}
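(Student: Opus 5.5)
The plan is to read off the local orders at the cusp $0$ from \Cref{lem:prelim-divisor-chart}, feed in \Cref{lem:cusp-0-regularity} for the Frobenius defect, and then apply the Cartier formula of \Cref{lem:cartier-cusp0-expansion}.

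\emph{Step 1: local order of $f$ at $P_0$.} In the Fricke--Tate parameter $q_0$, \Cref{lem:prelim-divisor-chart} gives $\ord_{P_0}(C_0)=1$ and $\ord_{P_0}(uC_0)=0$, with unit leading coefficients. I will confirm the integrality of these expansions directly using \Cref{rem:fricke-trace-line}. There $w_3^*C_0=-27\,uC_0$ and $w_3^*(uC_0)=-C_0/27$, so at the cusp $0$:
\begin{itemize}[leftmargin=2em]
\item $C_0$ is expanded by $-27\,E_{5,\chi_3,\chi_0}(q_0)\in q_0\Z_p[[q_0]]$;
\item $uC_0$ is expanded by $-\tfrac{1}{27}C_0(q_0)=-E_{5,\chi_0,\chi_3}(q_0)\in\Z_p[[q_0]]$.
\end{itemize}
These are integral because $p\ge7$, so $3$ is a unit. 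The constant-term normalization depends on the weight-$5$ slash convention, but only integrality matters here.

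\emph{Step 2: the defect factor.} By \Cref{lem:cusp-0-regularity}, $t=q_0\eta_0(q_0)$ and $\phi_{p,0}^\ell/p^\ell\in q_0^\ell\Z_p[[q_0]]$. This is where the $q_0$-vanishing comes from.

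\emph{Step 3: the product.} Even for $f=uC_0$, which is only a unit at $P_0$, the product satisfies
\[
f\,\phi_{p,0}^\ell/p^\ell\in q_0^{\ell}\Z_p[[q_0]]\subset q_0\Z_p[[q_0]]
\]
for $\ell\ge1$. For $f=C_0$ we get the stronger $q_0^{\ell+1}$.

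\emph{Step 4: apply the trace.} \Cref{lem:cartier-cusp0-expansion} shows that $\mathcal C_{\Phi,5}$ is $\Lambda_p$ in $q_0$, that it preserves $q_0\Z_p[[q_0]]$, and that $\psi$ is exactly $\phi_{p,0}/p$ on this chart. Hence $\mathcal C_{\Phi,5}(f\psi^\ell)\in q_0\Z_p[[q_0]]$.

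\emph{Step 5: multiply by $g$.} By \Cref{lem:prelim-divisor-chart}, $u$ has a simple pole at $P_0$ with unit leading coefficient, so $g=1+27u\in q_0^{-1}\Z_p[[q_0]]$. Multiplying a series in $q_0\Z_p[[q_0]]$ by $g$ therefore lands in $\Z_p[[q_0]]$, which is the claimed holomorphy at the cusp $0$.

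The main obstacle is Step 1: justifying integral expansions of $f$ at the cusp $0$ with the stated orders, which involves the width and normalization of $q_0$ and the Fricke slash factor. I would handle this through the Fricke exchange of Eisenstein lines, treating any constant factor as irrelevant since it is a power of $3$ times a unit.
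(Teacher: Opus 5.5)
Your proposal is correct and follows the paper's proof essentially step for step: the orders $\ord_0(C_0)=1$, $\ord_0(uC_0)=0$ from the divisor chart, then $\phi_{p,0}^\ell/p^\ell\in q_0^\ell\Z_p[[q_0]]$ from \Cref{lem:cusp-0-regularity}, then preservation of $q_0\Z_p[[q_0]]$ by the Cartier operator of \Cref{lem:cartier-cusp0-expansion}, and finally the simple pole of $g$ at $P_0$. Your optional Fricke cross-check in Step 1 contains a harmless slip: $-\tfrac{1}{27}C_0=-\tfrac19E_{5,\chi_0,\chi_3}$, not $-E_{5,\chi_0,\chi_3}$, but $9$ is a $p$-adic unit, so integrality is unaffected.
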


\begin{proof}
By \Cref{lem:level3-divisor-chart-C0}, $\ord_0(C_0)=1$ and $\ord_0(uC_0)=0$, with $S$-unit leading coefficients. By the proof of \Cref{lem:cusp-0-regularity}, applied in the Fricke--Tate coordinate, the local defect
\[
\phi_{p,0}:=t(q_0^p)/t(q_0)^p-1
\]
lies in $pq_0\Z_p[[q_0]]$, so $\phi_{p,0}^\ell/p^\ell\in q_0^\ell\Z_p[[q_0]]$ for $\ell\ge 1$. Multiplying:
\[
\ord_0\!\left(C_0\cdot\phi_{p,0}^\ell/p^\ell\right)\ge 1+\ell\ge 2,\qquad
\ord_0\!\left(uC_0\cdot\phi_{p,0}^\ell/p^\ell\right)\ge \ell\ge 1.
\]
In particular both products lie in $q_0\Z_p[[q_0]]$. By \Cref{lem:cartier-cusp0-expansion}, the normalized Katz trace on the cusp-$0$ chart is the Cartier operator for $q_0\mapsto q_0^p$, and it preserves $q_0\Z_p[[q_0]]$; since on this chart $\psi=\phi_{p,0}/p$ by the same lemma, this gives $\mathcal C_{\Phi,5}(f\psi^\ell)\in q_0\Z_p[[q_0]]$.

Finally, $\ord_0(g)=\ord_0(1+27u)=-1$ because $\ord_0(u)=-1$ with $S$-unit leading coefficient (\Cref{lem:level3-divisor-chart-C0}). Hence multiplying an element of $q_0\Z_p[[q_0]]$ by $g$ yields an element of $\Z_p[[q_0]]$, i.e., a holomorphic $q_0$-series at the cusp $0$.
\end{proof}

\subsection{Classicality on both basis elements}\label{subsec:classicality-both}

\begin{lemma}[Layer decomposition of $\Phi^*t/t^p-1$ at $P_-$]\label{lem:psi-layer-decomposition}
On the formal completion at $P_-$ with stack parameter $r$ and $s=r^3$, write
\[
\Phi^*r=F(r)=r^p+\sum_{j\ge1}p^j r\alpha_j(r^3),
\qquad \alpha_j\in\Z_p[\zeta_3][[s]].
\]
Then
\[
\frac{\Phi^*t}{t^p}-1=pA_1(r)+p^2A_2(r)+p^3A_3(r)+O(p^4),
\]
where $A_j(r)\in r^{j(1-p)}\Z_p[\zeta_3][[r^3]]$ for $j=1,2,3$.
\end{lemma}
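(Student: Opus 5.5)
We express $t$ in the stack parameter $r$ at $P_-$ and substitute $F(r)$ for $r$. By \Cref{lem:Pminus-unit-leading}, $g=r^3$, so $u=(r^3-1)/27$ and
\[
t=\frac{u}{g^2}=\frac{r^3-1}{27\,r^6}=r^{-6}\,\tau(r^3),\qquad \tau(s):=\frac{s-1}{27}\in\Z_p[\zeta_3][[s]]^\times .
\]
Then $\Phi^*t=F(r)^{-6}\tau(F(r)^3)$ and $t^p=r^{-6p}\tau(r^3)^p$. We split the quotient as
\[
\frac{\Phi^*t}{t^p}=\Bigl(\frac{F(r)}{r^p}\Bigr)^{-6}\cdot\frac{\tau(F(r)^3)}{\tau(r^3)^p}.
\]

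For the first factor, write $F(r)/r^p=1+Y$ with
\[
Y=\sum_{j\ge1}p^j\,r^{1-p}\alpha_j(r^3),
\]
where $r^{1-p}\in\Z[[r^3]][r^{-3}]$ because $3\mid p-1$. Expanding $(1+Y)^{-6}$ binomially, a monomial $Y_{j_1}\cdots Y_{j_k}$ carries $p^{\sum j_i}$ and lies in $r^{k(1-p)}\Z_p[\zeta_3][[r^3]]$. Collect terms by total $p$-power $J=\sum j_i$, so $J\le3$ and $k\le J$. Since $r^{k(1-p)}\in r^{J(1-p)}\Z_p[\zeta_3][[r^3]]$ whenever $k\le J$ (the difference in exponents is $(J-k)(p-1)\ge0$ and divisible by $3$), each $p^J$-layer lies in $r^{J(1-p)}\Z_p[\zeta_3][[r^3]]$. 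Binomial coefficients of exponent $-6$ are integers, so integrality is preserved.

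For the second factor, $F(r)^3=r^{3p}(1+Y)^3$, so $\tau(F(r)^3)=\tau(s^p(1+Y)^3)$ with $s=r^3$. First compare $\tau(s^p)$ with $\tau(s)^p$. Since $\tau$ has $\Z_p$-coefficients and is a unit, $\tau(s^p)/\tau(s)^p\in1+p\Z_p[[s]]$, because $\tau(s)^p\equiv\tau(s^p)\pmod p$. Its $p$-adic expansion therefore has layers in $\Z_p[[r^3]]\subset r^{j(1-p)}\Z_p[\zeta_3][[r^3]]$, since $j(1-p)\le0$. Next, Taylor expand $\tau(s^p(1+Y)^3)$ around $s^p$. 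Here $\tau$ is affine, so the expansion is exact:
\[
\tau\bigl(s^p(1+Y)^3\bigr)=\tau(s^p)+\frac{s^p\bigl((1+Y)^3-1\bigr)}{27}.
\]
The $p^J$-part of $(1+Y)^3-1$ lies in $r^{J(1-p)}\Z_p[\zeta_3][[r^3]]$ as before, and multiplying by $s^p=r^{3p}$ only helps. Dividing by the unit $\tau(s^p)$ keeps it in $\Z_p[\zeta_3][[r^3]]$-modules.

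Finally, multiply the two factors and sort by $p$-power. The product of a $p^{J_1}$-layer in $r^{J_1(1-p)}\Z_p[\zeta_3][[r^3]]$ and a $p^{J_2}$-layer in $r^{J_2(1-p)}\Z_p[\zeta_3][[r^3]]$ lies in $r^{(J_1+J_2)(1-p)}\Z_p[\zeta_3][[r^3]]$, so the filtration is multiplicative. The $p^0$-layer is exactly $1$, and all higher terms are absorbed into $O(p^4)$.

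The main obstacle is the bookkeeping showing that a layer of $p$-power $J$ never carries more than $J$ factors of $r^{1-p}$. This is exactly the inequality $k\le\sum j_i$. The only other subtlety is ensuring that all intermediate objects live in $\Z_p[\zeta_3][[r^3]][r^{-3}]$, which holds because $p\equiv1\pmod3$.
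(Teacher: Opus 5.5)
Your proposal is correct and follows the paper's proof essentially step for step. You use the same factorization $\Phi^*t/t^p=(F(r)/r^p)^{-6}\cdot\tau(F(r)^3)/\tau(s)^p$ with $t=r^{-6}\tau(r^3)$, the same binomial bookkeeping (at most $J$ factors of $r^{1-p}$ in the $p^J$-layer), and the same congruence $\tau(s)^p\equiv\tau(s^p)\pmod p$ for the $Y=0$ part of the second factor. Your exact affine splitting $\tau(s^p(1+Y)^3)=\tau(s^p)+s^p((1+Y)^3-1)/27$ is only a reorganization of the paper's $27^{p-1}(s^p(1+Y)^3-1)/(s-1)^p$.

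One small imprecision: the untruncated $(1+Y)^{-6}$ has unbounded pole order in $r$, so it lives only in the $p$-adic completion of $\Z_p[\zeta_3][[r]][r^{-1}]$. Your claim that all intermediate objects lie in $\Z_p[\zeta_3][[r^3]][r^{-3}]$ therefore holds only modulo $p^4$, which is all the lemma requires.
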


\begin{proof}
With the normalization $g=1+27u=r^3=s$, one has
\[
t=\frac{u}{g^2}=r^{-6}\tau(s),\qquad \tau(s)=\frac{s-1}{27}\in\Z_p[\zeta_3][[s]]^\times.
\]
Put
\[
Y:=\frac{F(r)}{r^p}-1=pY_1+p^2Y_2+p^3Y_3+O(p^4),
\qquad Y_j\in r^{1-p}\Z_p[\zeta_3][[s]].
\]
Then
\[
\frac{\Phi^*t}{t^p}=(1+Y)^{-6}\cdot \frac{\tau(F(r)^3)}{\tau(s)^p}.
\]
The first factor has the binomial expansion
\[
(1+Y)^{-6}=1+pB_1+p^2B_2+p^3B_3+O(p^4),
\qquad B_j\in r^{j(1-p)}\Z_p[\zeta_3][[s]].
\]
For the second factor, since $F(r)^3=s^p(1+Y)^3$ and $\tau(s)=(s-1)/27$,
\[
\frac{\tau(F(r)^3)}{\tau(s)^p}=27^{p-1}\frac{s^p(1+Y)^3-1}{(s-1)^p}.
\]
The term with $Y=0$ is congruent to $1$ modulo $p$, because $s^p-1\equiv (s-1)^p\pmod p$ and $27^{p-1}\equiv1\pmod p$; its higher $p$-adic coefficients lie in $\Z_p[\zeta_3][[s]]$, which is contained in $r^{j(1-p)}\Z_p[\zeta_3][[s]]$ for every $j\ge1$. The remaining terms contain $(1+Y)^3-1$, whose degree-$j$ contribution lies in $p^j r^{j(1-p)}\Z_p[\zeta_3][[s]]$; multiplication by $s^p/(s-1)^p$ preserves these inclusions. Hence
\[
\frac{\tau(F(r)^3)}{\tau(s)^p}=1+pC_1+p^2C_2+p^3C_3+O(p^4),
\qquad C_j\in r^{j(1-p)}\Z_p[\zeta_3][[s]].
\]
Multiplying the two expansions gives the asserted layer decomposition.
\end{proof}

\begin{lemma}[Stack pole bound on the two Eisenstein lines]\label{lem:pole-bound-El}
Let $f\in\{C_0,uC_0\}$, and let $\mathfrak E_\ell(f)$ be the finite difference of \Cref{lem:cartier-hecke-local}. Then, for $\ell=1,2,3$,
\[
\operatorname{pole}^{\rm st}_{P_-}\bigl(\mathfrak E_\ell(f)\bigr)\le 1\pmod{p^{4-\ell}}.
\]
\end{lemma}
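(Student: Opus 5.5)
The plan is to work entirely in the stack-local chart at $P_-$ and reduce to \Cref{prop:pole-estimate-star}. By \Cref{lem:cartier-hecke-local}, on the ordinary locus (which contains $P_-$ because $p$ is split, \Cref{rem:P-minus-ordinary}) we have $\mathfrak E_\ell(f)\equiv\mathcal C_{\Phi,5}(f\psi^\ell)\pmod{p^{4-\ell}}$. It therefore suffices to show that the principal part of $\mathcal C_{\Phi,5}(f\psi^\ell)$ in the target parameter has only an $r^{-1}$ term modulo $p^{4-\ell}$.

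One caveat must be dealt with first. The Verschiebung error $p^{4-\ell}\mathcal V_\ell(f)$ is only asserted to be integral on the ordinary locus. Since $\mathfrak E_\ell(f)$ is a global integral section (\Cref{lem:divisibility-Nell}), I would read the congruence as holding in the completed local ring at $P_-$, with poles allowed. That is all that is needed: "pole$^{\rm st}\le1$ mod $p^{4-\ell}$" is a statement about the principal part in $\Z_p[\zeta_3]((r))/p^{4-\ell}$.

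\emph{Step 1: local expansion of the integrand.} By \Cref{lem:Pminus-unit-leading}, write $f=r^2c_f(r^3)e$ with $c_f\in\Z_p[\zeta_3][[r^3]]^\times$. By \Cref{lem:psi-layer-decomposition},
\[
\psi=\frac{\Phi^*t/t^p-1}{p}=A_1+pA_2+p^2A_3+O(p^3),\qquad A_j\in r^{j(1-p)}\Z_p[\zeta_3][[r^3]].
\]
Expanding $\psi^\ell$ multinomially, the part of $p$-adic weight $p^{s-\ell}$ (with $s\ge \ell$ the total layer index) lies in $p^{s-\ell}r^{s(1-p)}\Z_p[\zeta_3][[r^3]]$, using $A_{j_1}\cdots A_{j_\ell}\in r^{(j_1+\dots+j_\ell)(1-p)}\Z_p[\zeta_3][[r^3]]$. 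Terms with $s\ge4$ carry a factor $p^{s-\ell}\ge p^{4-\ell}$ and are discarded, and so are the $O(p^3)$ remainders. Hence
\[
f\psi^\ell\equiv\sum_{s=\ell}^{3}p^{s-\ell}h_s,\qquad h_s\in r^{2+s(1-p)}\Z_p[\zeta_3][[r^3]]\,e \pmod{p^{4-\ell}}.
\]
The remaining point to check here is that discarding a $p^{4-\ell}$-divisible term before applying $\mathcal C_{\Phi,5}$ is harmless. This could fail because $\mathcal C_{\Phi,5}$ contains a $1/p$. I would control it with the residue formula \eqref{eq:trace-residue}: there $F'$ contributes an explicit factor of $p$, except through the $pD_j$-type terms, which are also divisible by $p$. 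So $\mathcal C_{\Phi,5}$ maps $p^N$-divisible Laurent inputs to $p^{N}$-divisible principal parts, and the $1/p$ is absorbed.

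\emph{Step 2: apply the pole estimate.} Linearity of $\mathcal C_{\Phi,5}$ together with \eqref{eq:star}, applied to each $s\in\{\ell,\dots,3\}$, puts every summand in $r^{-1}\Z_p[\zeta_3][[r^3]]e$ modulo $p^{4-\ell}$. The $\mu_3$-character bookkeeping of Step 1 of \Cref{prop:pole-estimate-star}, which forces $K\equiv1\pmod 3$, is exactly what excludes $K=2,3$. The principal part of $\mathcal C_{\Phi,5}(f\psi^\ell)$ at $P_-$ is therefore concentrated in $r^{-1}$ modulo $p^{4-\ell}$, which gives $\operatorname{pole}^{\rm st}_{P_-}\le1$. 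The same argument works for $f=uC_0$ because $c_u(0)$ is a unit and $u$ is a $\mu_3$-invariant unit near $P_-$.

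The main obstacle is the integrality issue raised in Step 1. I need the remainders in $\psi^\ell$ and in $F$ to survive the $1/p$ in $\mathcal C_{\Phi,5}$ when their $r$-exponents are very negative. The residue formula handles this uniformly because every residue integrand contains $F'\in p\,\Z_p[\zeta_3][[r]]$. I would state this as a one-line sublemma: $\mathcal C_{\Phi,5}(p^N\Z_p[\zeta_3]((r))e)$ has principal part in $p^N\Z_p[\zeta_3][r^{-1}]$. I would also justify identifying the target parameter $R$ with $r$, which follows because the target disk of $\Phi$ at $P_-$ is again a neighborhood of $P_-$.
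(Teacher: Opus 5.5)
Your proposal is correct and follows the paper's proof: it uses the layer decomposition of $\psi$ and the local form $f=r^2c_f(r^3)e$ to write $f\psi^\ell\equiv\sum_{s=\ell}^3p^{s-\ell}h_s \pmod{p^{4-\ell}}$, applies \Cref{prop:pole-estimate-star} termwise, and transfers the bound to $\mathfrak E_\ell(f)$ via \Cref{lem:cartier-hecke-local}. Your added sublemma, that $\mathcal C_{\Phi,5}$ preserves $p^N$-divisibility of principal parts because $F'\in p\,\Z_p[\zeta_3][[r]]$ absorbs the $1/p$ in the residue formula, is valid and makes explicit a step the paper leaves implicit when it discards $p^{4-\ell}$-divisible remainders.
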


\begin{proof}
Set $\psi:=(\Phi^*t/t^p-1)/p$. By \Cref{lem:psi-layer-decomposition},
\[
\psi\in r^{1-p}\Z_p[\zeta_3][[r^3]]+pr^{2(1-p)}\Z_p[\zeta_3][[r^3]]+p^2r^{3(1-p)}\Z_p[\zeta_3][[r^3]]\pmod{p^3}.
\]
At $P_-$, the divisor chart gives
\[
C_0=r^2c(r^3)e,\qquad uC_0=r^2c_u(r^3)e,
\]
with $c(0),c_u(0)\in\Z_p^\times$, since $u$ is a unit at $P_-$ (\Cref{lem:level3-divisor-chart-C0}). Thus the same estimate applies to both $f=C_0$ and $f=uC_0$. Hence $f\psi^\ell=\sum_{s=\ell}^3 p^{s-\ell}F_{\ell,s}(r)e\pmod{p^{4-\ell}}$, $F_{\ell,s}\in r^{2+s(1-p)}\Z_p[[r^3]]$. By \Cref{prop:pole-estimate-star}, each summand maps under $\mathcal C_{\Phi,5}$ into $r^{-1}\Z_p[[r^3]]e\pmod{p^{4-\ell}}$. By the local Cartier--Hecke comparison \Cref{lem:cartier-hecke-local}, on the ordinary neighbourhood of $P_-$ (which is ordinary at split primes since $p\equiv 1\pmod 3$ makes $j=0$ split-ordinary),
\[
\mathfrak E_\ell(f)\equiv \mathcal C_{\Phi,5}(f\psi^\ell)\pmod{p^{4-\ell}}.
\]
Hence the $r^{-1}$-pole bound established for $\mathcal C_{\Phi,5}(f\psi^\ell)$ above transfers to $\mathfrak E_\ell(f)$ modulo $p^{4-\ell}$. The displayed $q$-expansion identity is the Tate-chart specialization at the cusp $\infty$ and is not used here.
\end{proof}

\begin{lemma}[Level-$3$ divisor chart for $C_0$ and $uC_0$]\label{lem:level3-divisor-chart-C0}
Let $p\ge 7$ be split, $p\equiv 1\pmod 3$, let $R=\Z_p$, and let $S$ be either $R$ or $R/p^N R$. Let $P_\infty$ denote the cusp with $q=0$ so that $u(P_\infty)=0$, let $P_0$ denote the other cusp so that $u=\infty$, and let $P_-:1+27u=0$. On the coarse curve $X_0(3)_S\simeq\mathbb P^1_{u,S}$ the following local order table holds:
\[
\begin{array}{c|ccc}
& P_\infty & P_0 & P_- \\ \hline
u & 1 & -1 & 0\\
C_0 & 0 & 1 & 2\ \text{\rm(stack)}\\
uC_0 & 1 & 0 & 2\ \text{\rm(stack)}.
\end{array}
\]
Moreover the leading coefficients in all three local descriptions are $S$-units. In particular $C_0$ has no zero away from $P_0$ and $P_-$, and $uC_0$ is holomorphic at all three special points.
\end{lemma}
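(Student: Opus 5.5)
The plan is to reuse the theta and eta data already used in \Cref{lem:prelim-divisor-chart}, namely
\[
C_0=\mathcal A^2\mathcal B^3,\qquad uC_0=\frac{\mathcal A^2\mathcal D^3}{27},\qquad g=1+27u=\frac{\mathcal A^3}{\mathcal B^3},\qquad 27u=\frac{\mathcal D^3}{\mathcal B^3}.
\]
I will then read off local orders at the three special points of $X_0(3)_S\simeq\mathbb P^1_{u,S}$. The statement is essentially a restatement of \Cref{lem:prelim-divisor-chart}, sharpened at $P_-$ by \Cref{lem:Pminus-unit-leading}. So the proof will invoke those two lemmas and check that the split hypothesis and reduction to $S=R/p^NR$ cause no problem.

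\textbf{At $P_\infty$.} The eta product $u=q\prod(1-q^{3n})^{12}/(1-q^n)^{12}$ is $q$ times a series in $1+q\Z[[q]]$, so $\ord_\infty u=1$ with leading coefficient $1$. Next, $\mathcal B=\eta(\tau)^3/\eta(3\tau)\in1+q\Z[[q]]$ and $\mathcal A=1+6q+\cdots$ are units. Hence $C_0=\mathcal A^2\mathcal B^3$ has order $0$ with constant term $1$; this matches $3E_{5,\chi_0,\chi_3}$, whose constant term is $3\cdot\tfrac13=1$. Likewise $uC_0=q+\cdots$ has order $1$ with leading coefficient $1$.

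\textbf{At $P_0$.} I apply the Fricke involution $w_3$ and use $w_3^*u=1/(729u)$, which already gives $\ord_{P_0}u=-1$. The leading coefficient of $u$ in the Fricke--Tate parameter $q_0$ is a power of $3$ up to sign, hence an $S$-unit because $p\ge7$. From \Cref{rem:fricke-trace-line} we have $w_3^*C_0=-27uC_0$ and $w_3^*(uC_0)=-C_0/27$. Therefore the orders of $C_0,uC_0$ at $P_0$ are those of $uC_0,C_0$ at $P_\infty$, namely $1$ and $0$, and the leading coefficients are $-27$ and $-1/27$ times units, again $S$-units.

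If one prefers not to rely on the remark, the same conclusion follows from the eta quotient: under $w_3$, $\mathcal B$ and $\mathcal D$ are exchanged up to constants that are powers of $3$ and $\sqrt{-3}$ and are units at $p$.

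\textbf{At $P_-$.} This is exactly \Cref{lem:Pminus-unit-leading}. There $g=r^3$, $\mathcal A=r\,a_0(r^3)$ and $\mathcal B$ is a unit, which give $C_0=r^2c(r^3)e$ with $c(0)$ a unit. Since $u(P_-)=-1/27$ is a unit, $uC_0=r^2c_u(r^3)e$ with $c_u(0)=-c(0)/27$. On the coarse curve $u$ itself has order $0$ there.

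\textbf{Remaining claims.} Away from the three special points, $u$ is finite and nonzero and $g\ne0$. By \Cref{lem:prelim-theta}, $\mathcal A$ vanishes only where $g=0$, and the eta quotients $\mathcal B$, $\mathcal D$ have no zeros on the upper half-plane. So $C_0$ has no zeros outside $P_0$ and $P_-$, and $uC_0$, having nonnegative order everywhere, is holomorphic. For reduction mod $p^N$, a unit of $R[[x]]$ remains a unit mod $p^N$.

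The main obstacle is the Fricke step at $P_0$: one must keep track of the $w_3$ normalization constants and make sure they involve only powers of $3$, not of $p$. I would settle this by checking the leading coefficient of $E_{5,\chi_3,\chi_0}|w_3$ against the constant term $1/3$ of $E_{5,\chi_0,\chi_3}$, using the standard Eisenstein transformation law.
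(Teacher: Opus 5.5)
Your proposal is correct and is essentially the paper's proof, which just cites \Cref{lem:prelim-divisor-chart}: there the orders are read off from $C_0=\mathcal A^2\mathcal B^3$, $uC_0=\mathcal A^2\mathcal D^3/27$ and $g=\mathcal A^3/\mathcal B^3$, using the Fricke exchange of $\mathcal B$ and $\mathcal D$ up to units at $P_0$, and the uniformizer $r=\mathcal A/\mathcal B$ with $g=r^3$ at $P_-$. Two cautions: at $P_0$, make your eta-quotient fallback the primary argument, because in the paper the identity $w_3^*C_0=-27uC_0$ of \Cref{rem:fricke-trace-line} is only proved in \Cref{lem:atkin-lehner-intertwining}, via $\ord_0(C_0)=1$ and \Cref{lem:M5-integral-mod-pN}, both of which rest on the present lemma; and your upper-half-plane argument that $C_0$ has no further zeros only covers the generic fibre, so over $R/p^NR$ you should add the degree count $\deg\omega^5(\chi_3)=\tfrac53=1+\tfrac23$, which on every fibre is already exhausted by the zeros at $P_0$ and $P_-$ (whose leading coefficients are units).
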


\begin{proof}
This is \Cref{lem:prelim-divisor-chart}.
\end{proof}

\begin{lemma}[Integral basis and mod $p^N$ classicality on $M_5(\Gamma_0(3),\chi_3)$]\label{lem:M5-integral-mod-pN}
Let $p\ge 7$ be split, $p\equiv 1\pmod 3$, and let $R=\Z_p$. For $S=R$ or $S=R/p^N R$, put
\[
\mathcal X_S:=\mathcal X_0(3)\times_{\operatorname{Spec} R}\operatorname{Spec} S,\qquad \mathcal L_S:=\omega^5(\chi_3)|_{\mathcal X_S},\qquad M_5(S):=H^0(\mathcal X_S,\mathcal L_S).
\]
Then the $S$-linear map $S^2\to M_5(S)$, $(a,b)\mapsto aC_0+b\,uC_0$, is an isomorphism. Equivalently, $M_5(S)=S\,C_0\oplus S\,uC_0$. In particular for every $N\ge 1$, $M_5(R/p^N R)=(R/p^N R)\,C_0\oplus(R/p^N R)\,uC_0$, and the $q$-expansion map at $P_\infty$ is injective on $M_5(R/p^N R)$, with image the $(R/p^N R)$-span of $C_0(q)$ and $u(q)C_0(q)$.
\end{lemma}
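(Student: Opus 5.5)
The plan has two halves: show that $aC_0+b\,uC_0$ gives an injective map $S^2\to M_5(S)$ with split image, and show that this image is all of $M_5(S)$. The case $S=R$ comes first. By \Cref{thm:modular} and its Sturm-bound argument, $M_5(\Gamma_0(3),\chi_3)\otimes\Q_p$ is $2$-dimensional, spanned by $C_0=3E_{5,\chi_0,\chi_3}$ and $uC_0=E_{5,\chi_3,\chi_0}$. Their $q$-expansions begin $C_0=1+3q+\cdots$ and $uC_0=q+\cdots$. The $2\times2$ matrix of $q^0,q^1$ coefficients is therefore unipotent, with determinant $1\in R^\times$. So if $s\in M_5(R)$, write $s=aC_0+b\,uC_0$ with $a,b\in\Q_p$. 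Reading off the coefficients gives $a=a_0(s)$ and $b=a_1(s)-3a_0(s)$. Both lie in $R$, because the $q$-expansion of an integral section has $R$-coefficients. This proves $M_5(R)=RC_0\oplus R\,uC_0$. Injectivity of $q$-expansion over $R$ is the $q$-expansion principle on the integral curve. Equivalently, it is the $D=0$ case of \Cref{lem:weak-q-saturation}.

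For $S=R/p^NR$ the same coefficient matrix shows that $(a,b)\mapsto aC_0+b\,uC_0$ is injective on $S^2$. Indeed, its composite with the $q^0,q^1$-coefficient map $M_5(S)\to S^2$ is unipotent, hence an isomorphism. It follows that $M_5(S)=SC_0\oplus SuC_0\oplus K$, where $K$ is the kernel of the first-two-coefficients map. It remains to show $K=0$, and this is where base change enters. Consider the sequence $0\to\mathcal L\xrightarrow{p^N}\mathcal L\to\mathcal L_S\to0$. Its cohomology gives $M_5(R)/p^N\hookrightarrow M_5(S)\to H^1(\mathcal X_R,\mathcal L)[p^N]$. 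So surjectivity reduces to the vanishing of $H^1(\mathcal X_R,\omega^5(\chi_3))$, or at least of its $p$-torsion. Since $p\ge7$, the stack is tame. Its coarse space is $\mathbb P^1_R$, and coherent cohomology of the stack equals that of the pushforward to the coarse space. That pushforward is a line bundle on $\mathbb P^1_R$ whose degree can be read off from \Cref{lem:prelim-divisor-chart}. The section $C_0$ has coarse divisor $P_0$ plus the stack contribution at $P_-$, and the stack order $2$ at the $\mu_3$-point contributes $\lfloor 2/3\rfloor=0$ on the coarse level. Hence the pushforward is $\mathcal O_{\mathbb P^1}(1)$. Its $H^1$ vanishes and its $H^0$ has rank $2$, compatibly with any base change. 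I expect this degree bookkeeping through the stacky point to be the main obstacle. The task is to verify that the invariant part of $r^2\Z_p[\zeta_3][[r]]$-type local sections gives exactly $\mathcal O(P_0)$ on the coarse curve, with no correction at $P_-$.

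Once $H^1=0$ is established, cohomology and base change give $M_5(S)=M_5(R)\otimes S=SC_0\oplus SuC_0$. The remaining claims then follow directly. The $q$-expansion map on $M_5(S)$ is injective, because a combination $aC_0+b\,uC_0$ with zero $q$-expansion has $a=0$ and then $b=0$ by the unipotent triangle. Its image is the $S$-span of $C_0(q)$ and $u(q)C_0(q)$ by construction.
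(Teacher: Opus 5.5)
Your proposal is correct, but it reaches surjectivity by a different route from the paper.

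The paper works directly over each $S$. For $f\in M_5(S)$ it shows that $f/C_0$ is a section of $\mathcal O_{\mathbb P^1_S}(P_0)$ on the coarse curve. It is regular at $P_-$ because, in the chart $[\operatorname{Spf}S[[r]]/\mu_3]$, the invariant sections form the eigenspace $r^2S[[s]]e$, and $C_0=r^2c(s)e$ generates it. It has at most a simple pole at $P_0$. The paper then computes $H^0(\mathbb P^1_S,\mathcal O(P_0))=S\oplus S\,u$ by hand, with no cohomology or base change, as \Cref{rem:external-inputs} stresses. Injectivity comes from the same unipotent $q^0,q^1$ matrix you use.

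You instead settle $S=R$ by the $\Q_p$-dimension count plus integrality of $a_0,a_1$. You then pass to $R/p^NR$ through $H^1(\mathcal X_R,\omega^5(\chi_3))=H^1(\mathbb P^1_R,\pi_*\omega^5(\chi_3))=0$, using exactness of $\pi_*$ on the tame stack.

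The step you flag as the main obstacle is exactly the paper's key local computation. Since $c(0)$ is a unit (\Cref{lem:Pminus-unit-leading}), $h\mapsto hC_0$ identifies $\mathcal O(P_0)$ with $\pi_*\omega^5(\chi_3)$ at $P_-$, so the stack order $2$ costs nothing on the coarse level. Once that identification is in hand, the same local argument runs verbatim over $S$, which is why the paper can dispense with the $H^1$ step.

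What your route buys is that the degree only has to be read on the generic fibre, since $\operatorname{Pic}(\mathbb P^1_R)\cong\Z$ via degree. You therefore never need the zero locus of $C_0$ on the special fibre. The paper's argument over $R/p^NR$ does use it, when it asserts that the only finite zero of $C_0$ is $P_-$.

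One point to make explicit: that $C_0$ has no generic-fibre zeros outside $P_0$ and $P_-$ is not part of the order table of \Cref{lem:prelim-divisor-chart}. It follows from $C_0=\mathcal A^2\mathcal B^3$, with $\mathcal B$ nonvanishing on the upper half-plane and $\mathcal A^3=g\,\eta(\tau)^9/\eta(3\tau)^3$. Alternatively, use the valence count $5\cdot4/12=1+\tfrac23$.
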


\begin{proof}
By \Cref{lem:level3-divisor-chart-C0} both $C_0$ and $uC_0$ are holomorphic sections of $\mathcal L_S$. Let $f\in M_5(S)$. We show $f/C_0$ is a section of $\mathcal O_{\mathbb P^1_S}(P_0)$ on the coarse curve. On the non-stacky points away from the zeros of $C_0$, $f/C_0$ is regular. The only finite zero of $C_0$ is at $P_-$. Near $P_-$ we use the quotient-stack chart $\widehat{\mathcal X}_{S,P_-}\simeq[\operatorname{Spf}S[[r]]/\mu_3]$, $s=r^3$. Choose the local generator $e$ of $\mathcal L_S$ so that $C_0=r^2c(s)e$, $c(0)\in S^\times$. If $f=F(r)e$ is an invariant local section, then $F(r)$ lies in the same $\mu_3$-eigenspace as $r^2c(s)$, because $f$ and $C_0$ are sections of the same equivariant line bundle. Since $S[[r]]=S[[s]]\oplus rS[[s]]\oplus r^2 S[[s]]$ and $3\in S^\times$, this eigenspace is exactly $r^2 S[[s]]$. Thus $F(r)=r^2 d(s)$, $d(s)\in S[[s]]$, and $f/C_0=d(s)/c(s)\in S[[s]]$. Therefore $f/C_0$ is regular at $P_-$.

At $P_\infty$ the section $C_0$ is a unit, so $f/C_0$ is regular. On the affine coarse chart $\mathbb A^1_{u,S}=\operatorname{Spec} S[u]$, $f/C_0\in S[u]$. At $P_0$, write $v=1/u$. By \Cref{lem:level3-divisor-chart-C0}, $C_0=v\,c_0(v)e_0$, $c_0(0)\in S^\times$. For $f=F_0(v)e_0$ with $F_0(v)\in S[[v]]$, $v\cdot f/C_0=F_0(v)/c_0(v)\in S[[v]]$. Thus $f/C_0$ has pole order at most $1$ at $P_0$ and no other pole.

It remains to compute $H^0(\mathbb P^1_S,\mathcal O(P_0))=S[u]\cap v^{-1}S[v]$. Since $v^{-1}S[v]=S[u^{-1}]+S\cdot u$ and the Laurent monomials are $S$-linearly independent, the intersection is $S\oplus S\cdot u$. Hence $f/C_0=a+bu$, $a,b\in S$, and $f=aC_0+b\,uC_0$. This proves surjectivity.

For injectivity and the $q$-expansion statement, use $C_0(q)=1+O(q)$ and $u(q)C_0(q)=q+O(q^2)$. The matrix $\begin{pmatrix}[q^0]C_0 & [q^0](uC_0)\\ [q^1]C_0 & [q^1](uC_0)\end{pmatrix}$ has $[q^0](uC_0)=0$ and $[q^0]C_0=[q^1](uC_0)=1$, hence determinant $1$. Thus if $aC_0+buC_0$ has zero $q$-expansion over $S$, the constant term gives $a=0$, and the $q^1$ coefficient then gives $b=0$. Taking $S=R$ gives the integral free $R$-basis, and taking $S=R/p^N R$ gives the asserted mod $p^N$ classicality.
\end{proof}

\begin{theorem}[Closure for $C_0$]\label{thm:KB-closed}
For every split prime $p\ge 7$ and every $\ell=1,2,3$,
\[
g\Lambda_p(C_0\phi_p^\ell/p^\ell)\in(\Z_p/p^{4-\ell}\Z_p)\cdot uC_0.
\]
\end{theorem}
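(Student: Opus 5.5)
The plan is to combine the holomorphy transfer with the integral basis and then pin down the coefficient of $C_0$ by a constant-term computation at $\infty$.

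\emph{Step 1 (classicality).} By \Cref{lem:holomorphy-transfer} applied with $f=C_0$, the section $g\,\mathfrak E_\ell(C_0)$ reduces to an element of $M_5(\Gamma_0(3),\chi_3)\otimes S$, where $S=\Z_p/p^{4-\ell}\Z_p$. By \Cref{lem:M5-integral-mod-pN} this space is free on $C_0,uC_0$, and the $q$-expansion map at $P_\infty$ is injective on it. Hence there are scalars $a_\ell,b_\ell\in S$ with
\[
g\,\mathfrak E_\ell(C_0)\equiv a_\ell C_0+b_\ell\,uC_0\pmod{p^{4-\ell}}
\]
as $q$-series.

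\emph{Step 2 (identify the $q$-expansion).} By \Cref{lem:cartier-hecke-local}, on the Tate chart at $\infty$ we have $\mathfrak E_\ell(C_0)\equiv\Lambda_p(C_0\phi_p^\ell/p^\ell)\pmod{p^{4-\ell}}$. Since $g=1+27u$ has integral $q$-expansion, it follows that $g\Lambda_p(C_0\phi_p^\ell/p^\ell)\equiv a_\ell C_0+b_\ell uC_0\pmod{p^{4-\ell}}$.

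\emph{Step 3 (kill the $C_0$ component).} By \Cref{lem:exp-layers} and \Cref{def:nu-phi}, $\phi_p\in pq\Z_{(p)}[[q]]$. Hence $C_0\phi_p^\ell/p^\ell\in q^\ell\Z_p[[q]]$ with $\ell\ge1$, and $\Lambda_p$ of it lies in $q\Z_p[[q]]$, exactly as in Step 2 of \Cref{lem:holomorphy-transfer}. Multiplying by $g\in\Z_p[[q]]$ keeps the constant term zero. Comparing constant terms, $[q^0]C_0=1$ and $[q^0](uC_0)=0$, which gives $a_\ell\equiv0\pmod{p^{4-\ell}}$. Therefore $g\Lambda_p(C_0\phi_p^\ell/p^\ell)\equiv b_\ell\,uC_0$, which is the claim. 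The identity in Step 2 is a congruence of $q$-series, and the statement is read on $q$-expansions, so no further transfer is needed.

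The argument rests entirely on \Cref{lem:holomorphy-transfer}, and in particular on its Step 4 at $P_-$, which uses \Cref{lem:pole-bound-El} and \Cref{prop:pole-estimate-star}. That is where the real content lies. The one point to check there is that the local Cartier--Hecke identity is legitimate near $P_-$ modulo $p^{4-\ell}$, including the Verschiebung error term $p^{4-\ell}\mathcal V_\ell$, whose integrality at $P_-$ should be confirmed, since $t$ has a pole there. Granting the earlier lemmas, the present theorem is the short bookkeeping above.
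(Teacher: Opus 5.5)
Your proposal is correct and is essentially the paper's own proof. Both use classicality of $g\,\mathfrak E_\ell(C_0)$ modulo $p^{4-\ell}$ from \Cref{lem:holomorphy-transfer}, the free basis $\{C_0,uC_0\}$ of \Cref{lem:M5-integral-mod-pN}, the vanishing constant term at $\infty$ to kill the $C_0$-coefficient, and \Cref{lem:cartier-hecke-local} to pass between $\mathfrak E_\ell(C_0)$ and $\Lambda_p(C_0\phi_p^\ell/p^\ell)$; you merely apply the comparison before reading off the constant term rather than after. The integrality worry you flag at $P_-$ is harmless: $t$ has a pole there, not a zero, so $f/t^{jp}$ and $\operatorname{Ver}_p(f/t^{jp})$ are regular on the residue disk of $P_-$ (which $\Phi$ maps to itself), while $t^j$ only adds a bounded pole with integral Laurent coefficients, hence $p^{4-\ell}\mathcal V_\ell(f)\equiv 0\pmod{p^{4-\ell}}$ there.
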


\begin{proof}
Set $S=R/p^{4-\ell}R$. By \Cref{lem:cartier-hecke-local} it suffices to work with $\mathfrak E_\ell$. By \Cref{lem:holomorphy-transfer}, $g\mathfrak E_\ell\in M_5(\Gamma_0(3),\chi_3)\otimes S$. Its constant term at $P_\infty$ is zero, because the cusp-$\infty$ analysis in the proof of \Cref{lem:holomorphy-transfer} shows $g\mathfrak E_\ell\in q\Z_p[[q]]\otimes S$. By \Cref{lem:M5-integral-mod-pN}, $g\mathfrak E_\ell=aC_0+b\,uC_0$ with $a,b\in S$. The constant term at $P_\infty$ gives $a=0$, since $[q^0]C_0=1$ and $[q^0](uC_0)=0$. Hence $g\mathfrak E_\ell\in S\cdot uC_0$, and \Cref{lem:cartier-hecke-local} gives the stated congruence for $g\Lambda_p(C_0\phi_p^\ell/p^\ell)$.
\end{proof}

\begin{lemma}[Parallel closure for $uC_0$]\label{lem:parallel-KB-closure}
For every split prime $p\ge 7$ and every $\ell=1,2,3$,
\[
g\Lambda_p(uC_0\phi_p^\ell/p^\ell)\in(\Z_p/p^{4-\ell}\Z_p)C_0\oplus(\Z_p/p^{4-\ell}\Z_p)uC_0.
\]
\end{lemma}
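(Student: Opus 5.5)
The argument mirrors the proof of \Cref{thm:KB-closed} with $f=uC_0$, dropping the final step that kills the $C_0$-component. Fix $\ell\in\{1,2,3\}$ and put $S=\Z_p/p^{4-\ell}\Z_p$. The plan has four steps.

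\textbf{Step 1 (pass to the finite difference).} Apply \Cref{lem:cartier-hecke-local} with $f=uC_0$. On the Tate chart at $\infty$ it gives
\[
\mathfrak E'_\ell\equiv\Lambda_p\!\left(uC_0\,\frac{\phi_p^\ell}{p^\ell}\right)\pmod{p^{4-\ell}}.
\]
Since $g\in\Z[[q]]$, it suffices to show that $g\,\mathfrak E'_\ell$ lies in the $S$-span of $C_0$ and $uC_0$.

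\textbf{Step 2 (holomorphy transfer).} Invoke \Cref{lem:holomorphy-transfer}, which is stated for both $f\in\{C_0,uC_0\}$. Its inputs for $f=uC_0$ have all been checked.
\begin{itemize}[leftmargin=2em]
\item Global divisibility of $N_\ell(uC_0)$ by $p^\ell$ is \Cref{lem:divisibility-Nell}.
\item Regularity at $P_\infty$ follows from $\phi_p^\ell/p^\ell\in q^\ell\Z_{(p)}[[q]]$.
\item Regularity at $P_0$ is \Cref{lem:cusp-0-regularity-basis}. There $\ord_0(uC_0)=0$ and $\phi_{p,0}^\ell/p^\ell$ has order at least $\ell\ge1$, so the product lies in $q_0\Z_p[[q_0]]$, and the factor $g$, with its simple pole at $P_0$, is absorbed.
\item The pole bound at $P_-$ is \Cref{lem:pole-bound-El}. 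It rests on the local form $uC_0=r^2c_u(r^3)e$ with $c_u(0)$ a unit (\Cref{lem:Pminus-unit-leading}), so the same $\mu_3$-class computation and \Cref{prop:pole-estimate-star} apply verbatim.
\end{itemize}
Together these show that $g\,\mathfrak E'_\ell$ reduces to an element of $M_5(\Gamma_0(3),\chi_3)\otimes S$.

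\textbf{Step 3 (identify the span).} By \Cref{lem:M5-integral-mod-pN}, $M_5(S)=S\,C_0\oplus S\,uC_0$, and the $q$-expansion map at $P_\infty$ is injective. Hence $g\,\mathfrak E'_\ell=a\,C_0+b\,uC_0$ with $a,b\in S$. Transferring back through the congruence of Step 1 gives the stated membership for $g\Lambda_p(uC_0\phi_p^\ell/p^\ell)$.

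\textbf{Step 4 (remark on $a$).} Unlike \Cref{thm:KB-closed}, we do not conclude $a=0$. The constant term of $g\Lambda_p(uC_0\phi_p^\ell/p^\ell)$ vanishes, because the argument of $\Lambda_p$ lies in $q^{1+\ell}\Z_p[[q]]$. So in fact $a=0$ here as well, and one could state the sharper conclusion $\in S\cdot uC_0$. The lemma only records the weaker two-dimensional statement, which is what the Atkin--Lehner bridge will use.

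\textbf{Main obstacle.} The only step that is not a direct citation is confirming that the stack pole estimate at $P_-$ is genuinely insensitive to replacing $C_0$ by $uC_0$. This holds because $u$ is a $\mu_3$-invariant unit at $P_-$, with $u\in\Z_p[\zeta_3][[r^3]]^\times$ since $u=(r^3-1)/27$. Multiplying by $u$ therefore changes neither the $r$-exponent class mod $3$ nor the $p$-valuations in Steps 1--3 of \Cref{prop:pole-estimate-star}. I would verify this explicitly once and otherwise cite the lemmas above.
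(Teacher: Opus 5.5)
Your proposal is correct and follows the paper's proof essentially verbatim: apply \Cref{lem:holomorphy-transfer} with $f=uC_0$ to get $g\,\mathfrak E'_\ell\in M_5(\Gamma_0(3),\chi_3)\otimes S$, expand in the basis of \Cref{lem:M5-integral-mod-pN}, and transfer the membership to $g\Lambda_p(uC_0\phi_p^\ell/p^\ell)$ through the Cartier--Hecke comparison \Cref{lem:cartier-hecke-local}. Your Step 4 observation that the $C_0$-coefficient vanishes is also recorded in the paper immediately after the lemma, as $\alpha_\ell\equiv 0$. Note, however, that the bridge argument in \Cref{cor:bridge-scalar} and the mixed cancellation use this sharper form, not merely the two-dimensional membership.
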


\begin{proof}
Set $S=R/p^{4-\ell}R$, and let $\mathfrak E'_\ell$ be the analogue of \eqref{eq:hecke-fd} with $uC_0$ replacing $C_0$. By \Cref{lem:holomorphy-transfer} applied with $f=uC_0$, $g\mathfrak E'_\ell\in M_5(\Gamma_0(3),\chi_3)\otimes S$. Therefore \Cref{lem:M5-integral-mod-pN} gives $g\mathfrak E'_\ell\in S\,C_0\oplus S\,uC_0$. The Cartier--Hecke comparison \Cref{lem:cartier-hecke-local} extends with $uC_0$ in place of $C_0$, giving the asserted membership for $g\Lambda_p(uC_0\phi_p^\ell/p^\ell)$.
\end{proof}

\subsection{Bridge scalar via Atkin--Lehner}\label{subsec:bridge}

By \Cref{thm:KB-closed,lem:parallel-KB-closure} together with the triangular conversion \Cref{lem:phi-U-triangular}, there are scalars $\alpha_\ell,\beta_\ell,\gamma_\ell\in\Z_p$ modulo $p^{4-\ell}$ such that
\begin{align}
g\Lambda_p(C_0U_p^\ell)&\equiv p^\ell\gamma_\ell\,uC_0\pmod{p^4},\label{eq:C0-classicality}\\
g\Lambda_p(uC_0U_p^\ell)&\equiv p^\ell\alpha_\ell C_0+p^\ell\beta_\ell\,uC_0\pmod{p^4}.\label{eq:uC0-classicality}
\end{align}
The constant term at $\infty$ of the left side of \eqref{eq:uC0-classicality} is zero, so $\alpha_\ell\equiv 0\pmod{p^{4-\ell}}$.

\begin{lemma}[Weakly holomorphic Atkin--Lehner intertwining]\label{lem:atkin-lehner-intertwining}
Let $M_5^!(\Gamma_0(3),\chi_3)$ denote the space of weight-$5$, $\chi_3$ forms which are meromorphic only at the cusps. The Fricke pullback $w_3^*$ preserves this space, and for every prime $p\nmid 3$,
\[
T_p w_3^*=\chi_3(p)\,w_3^*T_p
\]
on $M_5^!(\Gamma_0(3),\chi_3)$. With the geometric normalization used here, $w_3^*C_0=-27\,uC_0$ and $w_3^*(uC_0)=-C_0/27$, and $t$ is $w_3$-invariant. Hence at split primes $\chi_3(p)=1$,
\begin{equation}\label{eq:fricke-intertwining}
\mathfrak E'_\ell=-\frac{1}{27}\,w_3^*\mathfrak E_\ell,\qquad\ell=1,2,3.
\end{equation}
\end{lemma}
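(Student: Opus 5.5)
The plan has three steps. First, show that $w_3^*$ preserves $M_5^!(\Gamma_0(3),\chi_3)$. Second, derive the intertwining $T_pw_3^*=\chi_3(p)w_3^*T_p$ from a double-coset computation. Third, specialize to the finite differences $N_\ell$, using that $t$ is $w_3$-invariant.

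For preservation, $W_3=\bigl(\begin{smallmatrix}0&-1\\3&0\end{smallmatrix}\bigr)$ normalizes $\Gamma_0(3)$. For $\gamma=\bigl(\begin{smallmatrix}a&b\\3c&d\end{smallmatrix}\bigr)\in\Gamma_0(3)$ one has $W_3\gamma W_3^{-1}=\bigl(\begin{smallmatrix}d&-c\\-3b&a\end{smallmatrix}\bigr)$. Since $ad\equiv1\pmod 3$ and $\chi_3$ is quadratic, $\chi_3(a)=\chi_3(d)$, so the nebentypus is unchanged. Cusps go to cusps, so weak holomorphy with finite principal parts is preserved.

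For the intertwining, I would use the standard Atkin--Lehner argument \cite[\S 5]{AtkinLehner70}. Conjugating the double coset $\Gamma_0(3)\bigl(\begin{smallmatrix}1&0\\0&p\end{smallmatrix}\bigr)\Gamma_0(3)$ by $W_3$ gives $\Gamma_0(3)\bigl(\begin{smallmatrix}p&0\\0&1\end{smallmatrix}\bigr)\Gamma_0(3)$. This is the same double coset, since $\bigl(\begin{smallmatrix}p&0\\0&1\end{smallmatrix}\bigr)=\gamma\bigl(\begin{smallmatrix}1&0\\0&p\end{smallmatrix}\bigr)\gamma'$ for suitable $\gamma,\gamma'\in\Gamma_0(3)$. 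The character factors then differ: a representative acting with diagonal entry $d$ is reweighted by $\chi_3(a)$ in place of $\chi_3(d)$. Since $ad\equiv p\pmod 3$ for these representatives, the total discrepancy is $\chi_3(p)$. The point I most need to get right is that the weight-$5$ slash normalizations of $W_3$ and the Hecke representatives are consistent, so that no stray power of $3$ or $p$ appears.

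A cleaner route is to check on $q$-expansions over the finite-dimensional quotients relevant here. Alternatively, verify the relation on $M_5$ using the Eisenstein eigenvalues: $T_pC_0$ and $T_p(uC_0)$ have eigenvalues $1+\chi_3(p)p^4$ and $\chi_3(p)+p^4$ respectively. The exchange $w_3^*C_0=-27uC_0$ intertwines them up to exactly the factor $\chi_3(p)$. Because both sides of the intertwining are linear and local at the double coset, this consistency check confirms the normalization. The Fricke formulas $w_3^*C_0=-27uC_0$ and $w_3^*(uC_0)=-C_0/27$ come from $C_0=\mathcal A^2\mathcal B^3$, $uC_0=\mathcal A^2\mathcal D^3/27$ and the Fricke exchange of $\mathcal B,\mathcal D$. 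The sign and the constant would be pinned down by comparing leading terms at $P_0$ via \Cref{lem:level3-divisor-chart-C0}.

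Finally, the $w_3$-invariance of $t$ is immediate from $t=x/(27(1+x)^2)$ with $x=27u\mapsto x^{-1}$. At split $p$, apply $w_3^*$ to $N_\ell(C_0)$. It commutes with multiplication by $t^j$ and, by the intertwining with $\chi_3(p)=1$, with $T_p$, so
\[
w_3^*N_\ell(C_0)=\sum_{j=0}^\ell(-1)^{\ell-j}\binom{\ell}{j}t^jT_p\!\left(\frac{w_3^*C_0}{t^{jp}}\right)=-27\,N_\ell(uC_0).
\]
Dividing by $p^\ell$, which is legitimate on the integral lattices of \Cref{lem:divisibility-Nell}, gives \eqref{eq:fricke-intertwining}.
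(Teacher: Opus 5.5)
Your proposal is correct and follows essentially the same route as the paper. Both arguments show that $w_3^*$ preserves $M_5^!(\Gamma_0(3),\chi_3)$, invoke the Atkin--Lehner double-coset identity $T_pw_3^*=\chi_3(p)\,w_3^*T_p$, use the $w_3$-invariance of $t$, and apply the intertwining termwise to the weakly holomorphic pieces $T_p(C_0/t^{jp})$ of $N_\ell(C_0)$ rather than to $\mathfrak E_\ell$ itself. The differences are minor: you read off $w_3^*C_0=-27\,uC_0$ from the exchange $\mathcal B\leftrightarrow\mathcal D$ in $C_0=\mathcal A^2\mathcal B^3$, whereas the paper uses $\ord_0(C_0)=1$, the basis of $M_5$, and the $q^1$-coefficient; and your Eisenstein-eigenvalue check on $M_5$ only fixes the constant $\chi_3(p)$ once the double-coset identity is known on all of $M_5^!$, so it cannot replace that identity.
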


\begin{proof}
The operators $w_3^*$ and $T_p$ are defined by slash operators attached to matrices in $\operatorname{GL}_2^+(\Q)$. These slash operators act on meromorphic modular forms just as on holomorphic modular forms; finite cusp principal parts remain finite cusp principal parts because the relevant maps are finite on the compact modular curve. Thus both operators preserve $M_5^!(\Gamma_0(3),\chi_3)$. The identity $T_p w_3^*=\chi_3(p)w_3^*T_p$ is the original Atkin--Lehner double-coset identity \cite[\S 5, equation~(58) and Theorem~3]{AtkinLehner70}, established by computing the product of double cosets $\Gamma_0(3)\bigl(\begin{smallmatrix}0&-1\\3&0\end{smallmatrix}\bigr)\Gamma_0(3)$ and $\Gamma_0(3)\bigl(\begin{smallmatrix}1&0\\0&p\end{smallmatrix}\bigr)\Gamma_0(3)$ in the Hecke algebra. The proof is an equality of finite sums of slash operators attached to double cosets, derived in a Hecke algebra independent of vanishing or holomorphy at the cusps; only the transformation law and the finite-dimensionality of cusp principal parts are used. The same calculation thus holds on the weakly holomorphic space $M_5^!(\Gamma_0(3),\chi_3)$ as on the holomorphic subspace; for the explicit weakly-holomorphic extension of the Hecke formalism see \cite[\S 2]{Guerzhoy08}.

The Fricke action on $u$ is $w_3^*u=1/(729u)$, and therefore
\[
w_3^*t=\frac{w_3^*u}{(1+27w_3^*u)^2}=\frac{1/(729u)}{(1+1/(27u))^2}=\frac{u}{(1+27u)^2}=t.
\]
The eta transformation $\eta(-1/\tau)=(-i\tau)^{1/2}\eta(\tau)$ with the geometric Fricke normalization gives $w_3^*C_0=-27q+O(q^2)$ at the cusp $\infty$, and $\operatorname{ord}_\infty(w_3^*C_0)=\operatorname{ord}_0(C_0)=1$. Hence $w_3^*C_0$ has zero constant term. By \Cref{lem:M5-integral-mod-pN} over $R=\Z_p$, the holomorphic weight-$5$, $\chi_3$ form $w_3^*C_0$ is an $R$-linear combination of $C_0$ and $uC_0$; the zero constant term kills the $C_0$ component, and the $q^1$ coefficient gives the scalar $-27$. Thus $w_3^*C_0=-27uC_0$. Applying $w_3^2=\mathrm{id}$ gives $w_3^*(uC_0)=-C_0/27$.

We apply the weakly-holomorphic Atkin--Lehner intertwining only to the individual terms $T_p(C_0/t^{jp})$ and $T_p(uC_0/t^{jp})$, which are weakly holomorphic with finite cusp principal parts. The finite-difference combination $\mathfrak E_\ell(f)$ may have a pole at $P_-$ before $g$-multiplication; the intertwining is not applied to $\mathfrak E_\ell(f)$ as an independent weakly holomorphic input.

Finally, apply $w_3^*$ termwise to the finite difference defining $\mathfrak E_\ell=\mathfrak E_\ell(C_0)$. Since $t$ is $w_3$-invariant and $\chi_3(p)=1$ for split primes,
\[
\begin{aligned}
w_3^*\mathfrak E_\ell&=\frac{1}{p^\ell}\sum_{j=0}^\ell(-1)^{\ell-j}\binom{\ell}{j}t^j\,w_3^*T_p\!\left(\frac{C_0}{t^{jp}}\right)\\
&=\frac{1}{p^\ell}\sum_{j=0}^\ell(-1)^{\ell-j}\binom{\ell}{j}t^j\,T_p w_3^*\!\left(\frac{C_0}{t^{jp}}\right)\\
&=-27\,\frac{1}{p^\ell}\sum_{j=0}^\ell(-1)^{\ell-j}\binom{\ell}{j}t^j\,T_p\!\left(\frac{uC_0}{t^{jp}}\right)=-27\,\mathfrak E'_\ell.
\end{aligned}
\]
This is the asserted relation.
\end{proof}

\begin{remark}[Split-prime trivialization of the twist]\label{rem:split-trivialization}
The split-prime hypothesis $p\equiv 1\pmod 3$ enters the Cartier--Hecke chain in two distinct places: (i) as the input condition for the $\mu_3$-equivariant canonical Frobenius normal form in \Cref{prop:local-frobenius-normal-form}, and (ii) as the condition $\chi_3(p)=1$ that makes the Atkin--Lehner intertwining $T_pw_3^*=\chi_3(p)w_3^*T_p$ untwisted. The bridge identity in \Cref{cor:bridge-scalar} relies specifically on the second trivialization. At inert primes $\chi_3(p)=-1$ would sign-flip the intertwining and break the bridge cancellation; this matches the $q$-side obstruction in \Cref{sec:inert}.
\end{remark}

\begin{corollary}[Bridge scalar cancellation]\label{cor:bridge-scalar}
For every split $p\ge 7$ and every $\ell=1,2,3$,
\[
\beta_\ell\equiv 27^{-1}\gamma_\ell\pmod{p^{4-\ell}},\qquad\text{equivalently}\qquad \gamma_\ell-27\beta_\ell\equiv 0\pmod{p^{4-\ell}}.
\]
\end{corollary}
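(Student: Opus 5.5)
The plan is to push the classicality statement for $C_0$ through the Fricke intertwining \eqref{eq:fricke-intertwining}, and then compare with the classicality statement for $uC_0$.

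Work first in the $\phi_p$-normalization, with $S=\Z_p/p^{4-\ell}\Z_p$. By \Cref{lem:holomorphy-transfer} and the proof of \Cref{thm:KB-closed}, there is a scalar $\tilde\gamma_\ell\in S$ with
\[
g\,\mathfrak E_\ell\equiv\tilde\gamma_\ell\,uC_0 .
\]
Similarly, \Cref{lem:parallel-KB-closure}, together with the vanishing of the constant term at $\infty$, gives $\tilde\beta_\ell\in S$ with
\[
g\,\mathfrak E'_\ell\equiv\tilde\beta_\ell\,uC_0 .
\]
Both congruences are identities in $M_5(\Gamma_0(3),\chi_3)\otimes S$, and \Cref{lem:M5-integral-mod-pN} makes them identities of global sections, not merely of $q$-expansions.

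Next apply $w_3^*$ to the first identity. Since $t$ is $w_3$-invariant, and $w_3^*u=1/(729u)$, we get
\[
w_3^*g=1+\frac{1}{27u}=\frac{g}{27u}.
\]
Also $w_3^*(uC_0)=-C_0/27$ by \Cref{lem:atkin-lehner-intertwining}. Hence
\[
\frac{g}{27u}\,w_3^*\mathfrak E_\ell\equiv-\frac{\tilde\gamma_\ell}{27}\,C_0 .
\]
Substituting $w_3^*\mathfrak E_\ell=-27\,\mathfrak E'_\ell$ from \eqref{eq:fricke-intertwining} gives
\[
g\,\mathfrak E'_\ell\equiv\frac{\tilde\gamma_\ell}{27}\,uC_0 .
\]
Comparing with the $uC_0$-classicality and using the injectivity of the $q$-expansion map (coefficient of $q^1$) in \Cref{lem:M5-integral-mod-pN} yields $\tilde\beta_\ell\equiv\tilde\gamma_\ell/27\pmod{p^{4-\ell}}$. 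Here $27$ is a unit because $p\ge7$.

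Finally, transfer the relation to the $U_p$-layer scalars $\beta_\ell,\gamma_\ell$ of \eqref{eq:C0-classicality} and \eqref{eq:uC0-classicality}. Take $\mathcal L_1(h)=g\Lambda_p(uC_0h)$ and $\mathcal L_2(h)=27^{-1}g\Lambda_p(C_0h)\cdot u$, read in the basis coordinate of $uC_0$. The relation $\tilde\beta_\ell\equiv\tilde\gamma_\ell/27$ holds for every $\ell=1,2,3$ with one fixed scalar $27^{-1}$. So the proportionality clause of \Cref{lem:phi-U-triangular} transports it verbatim to $\beta_\ell\equiv27^{-1}\gamma_\ell\pmod{p^{4-\ell}}$. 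A cleaner way to phrase this is that the $uC_0$-coordinate functional is $\Z_p$-linear, and the triangular matrix acts identically on both coordinate vectors.

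The main obstacle is making the $w_3^*$ step legitimate modulo $p^{4-\ell}$. The identity $g\,\mathfrak E_\ell\equiv\tilde\gamma_\ell\,uC_0$ is first established via $q$-expansions at $\infty$. It must be known as an equality of sections on all of $\mathcal X_0(3)_S$ before pulling back by the Fricke involution and reading the result at $\infty$ again, which amounts to reading the original at cusp $0$.

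I would handle this by noting two facts. First, $g\,\mathfrak E_\ell$ is a genuine integral global section mod $p^{4-\ell}$ by \Cref{lem:holomorphy-transfer}. Second, $M_5(S)$ is free on $C_0,uC_0$. Since $w_3^*$ is an automorphism of the stack over $\Z_p$ ($3\ne p$) that preserves $\omega^5(\chi_3)$, it acts on $M_5(S)$ by the same matrix as over $\Z_p$. The $q^1$-normalization $-27$ is integral and invertible, so nothing is lost modulo $p^{4-\ell}$.
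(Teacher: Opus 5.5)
Your proposal is correct and is essentially the paper's own argument. It runs through classicality of $g\,\mathfrak E_\ell$ and $g\,\mathfrak E'_\ell$ in the $\phi_p$-normalization, Fricke pullback using $w_3^*g=g/(27u)$, $w_3^*(uC_0)=-C_0/27$ and \eqref{eq:fricke-intertwining}, comparison of $q^1$-coefficients, and transfer to the $U_p$-layer scalars via \Cref{lem:phi-U-triangular} with the common scalar $27^{-1}$; pulling back the product $g\,\mathfrak E_\ell$ instead of first dividing by $g$ is a slightly cleaner way around the zero of $g$ at $P_-$, which the paper handles with a meromorphic lattice. The only slip is the stray factor $u$ in your $\mathcal L_2$: take $\mathcal L_1=g\Lambda_p(uC_0\,\cdot\,)$, $\mathcal L_2=g\Lambda_p(C_0\,\cdot\,)$ and $\lambda=27^{-1}$, which is what your coordinate-functional rephrasing amounts to.
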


\begin{proof}
By \Cref{thm:KB-closed,lem:parallel-KB-closure} there are scalars $\Gamma_\ell, B_\ell\in\Z_p/p^{4-\ell}\Z_p$ with $g\mathfrak E_\ell\equiv\Gamma_\ell uC_0$ and $g\mathfrak E'_\ell\equiv B_\ell uC_0\pmod{p^{4-\ell}}$ (the $C_0$-coefficient on the second side vanishes from constant-term considerations). The division by $g$ below is performed in the common meromorphic lattice with possible pole at $P_-$, since $g$ has a zero of order $1$ on the coarse curve there. After multiplying back by $g$, the resulting congruence lies in the holomorphic $M_5$-lattice modulo $p^{4-\ell}$. Using $w_3^*g=g/(27u)$, $w_3^*(uC_0)=-C_0/27$:
\[
w_3^*\mathfrak E_\ell\equiv\Gamma_\ell\frac{w_3^*(uC_0)}{w_3^*g}=\Gamma_\ell\frac{-C_0/27}{g/(27u)}=-\Gamma_\ell\frac{uC_0}{g}\pmod{p^{4-\ell}}.
\]
Combining with \eqref{eq:fricke-intertwining} and multiplying by $g$ yields $g\mathfrak E'_\ell\equiv\Gamma_\ell/27\cdot uC_0\pmod{p^{4-\ell}}$, so $B_\ell\equiv 27^{-1}\Gamma_\ell$. Since the scalar $27^{-1}$ is common to all three $\phi_p$-layers, \Cref{lem:phi-U-triangular} transfers the vector proportionality $B_\bullet\equiv 27^{-1}\Gamma_\bullet$ to the $U_p$-layer scalars, giving $\beta_\ell\equiv 27^{-1}\gamma_\ell$.
\end{proof}

\subsection{Mixed Cartier cancellation, transport, and the main theorem}\label{subsec:transport-and-main}

The main result of this paper, \Cref{thm:split-final-closed}, depends only on \Cref{prop:mixed-cartier-cancellation} below, the Katz--Dwork reduction \Cref{thm:KD-input}, and the Lagrange--B\"urmann identity \Cref{thm:LB}. The companion Dwork congruence \cite[Thm.~7.8]{Shvets-companion} for the diagonal point $(\tfrac13,\tfrac13;1)$ is used only in the optional \Cref{thm:transport-cancel} (transport comparison with the base-side layers $C_0V_p^\ell$) and is not invoked anywhere in the proof of the main theorem.

\begin{proposition}[Mixed Cartier cancellation]\label{prop:mixed-cartier-cancellation}
For every split prime $p\ge 7$ and every $\ell=1,2,3$,
\[
\Lambda_p(C_{\mathrm{mix}}U_p^\ell)\equiv 0\pmod{p^4}.
\]
\end{proposition}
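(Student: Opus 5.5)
The plan is to combine the two classicality statements \eqref{eq:C0-classicality} and \eqref{eq:uC0-classicality} linearly, using $C_{\mathrm{mix}}=C_0-27\,uC_0$, and then cancel with the bridge identity of \Cref{cor:bridge-scalar}. Everything is linear in the input form, since $\Lambda_p$ is $\Z_{(p)}$-linear on $q$-series. So, for each $\ell\in\{1,2,3\}$, I would write
\[
g\Lambda_p(C_{\mathrm{mix}}U_p^\ell)=g\Lambda_p(C_0U_p^\ell)-27\,g\Lambda_p(uC_0U_p^\ell).
\]
Substituting \eqref{eq:C0-classicality} and \eqref{eq:uC0-classicality}, with $\alpha_\ell\equiv 0\pmod{p^{4-\ell}}$ already recorded, the right side becomes
\[
p^\ell\gamma_\ell\,uC_0-27p^\ell\beta_\ell\,uC_0\equiv p^\ell(\gamma_\ell-27\beta_\ell)\,uC_0\pmod{p^4}.
\]
By \Cref{cor:bridge-scalar}, $\gamma_\ell-27\beta_\ell\equiv0\pmod{p^{4-\ell}}$, so $p^\ell(\gamma_\ell-27\beta_\ell)\equiv0\pmod{p^4}$. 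This gives $g\Lambda_p(C_{\mathrm{mix}}U_p^\ell)\equiv0\pmod{p^4}$.

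The remaining step is to remove the factor $g=1+27u$. On $q$-expansions at $\infty$, $g=1+27q+O(q^2)$ is a unit in $\Z_{(p)}[[q]]$, and $\Lambda_p(C_{\mathrm{mix}}U_p^\ell)$ is an honest power series in $\Z_{(p)}[[q]]$ because $U_p\in pq\Z_{(p)}[[q]]$. Multiplying by $g^{-1}\in\Z_{(p)}[[q]]$ therefore preserves congruences modulo $p^4$ coefficientwise, and the proposition follows. This route deliberately avoids any geometric division by $g$ at $P_-$: the statement is purely about $q$-series, and invertibility of $g$ in the power-series ring is all that is needed.

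The step I expect to need the most care is the scale bookkeeping. The scalars in \eqref{eq:C0-classicality}, \eqref{eq:uC0-classicality} are defined only modulo $p^{4-\ell}$ and then multiplied by $p^\ell$. One must check that the $\phi_p$-to-$U_p$ transfer of \Cref{lem:phi-U-triangular} produces congruences modulo $p^{4-\ell}$ for $\Lambda_p(f\,U_p^\ell/p^\ell)$, so that after multiplying by $p^\ell$ they become congruences modulo $p^4$ for $\Lambda_p(fU_p^\ell)$. The same check is needed for the bridge statement of \Cref{cor:bridge-scalar}, which is phrased for the $U_p$-layer scalars $\beta_\ell,\gamma_\ell$. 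Once this normalization is confirmed, the argument is just linear algebra in the basis $\{C_0,uC_0\}$ of \Cref{lem:M5-integral-mod-pN}, followed by the unit cancellation of $g$.
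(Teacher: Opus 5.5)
Your proposal is correct and matches the paper's proof essentially step for step. You split $C_{\mathrm{mix}}=C_0-27\,uC_0$, substitute the two classicality relations with $\alpha_\ell\equiv0$, cancel using $\gamma_\ell-27\beta_\ell\equiv0\pmod{p^{4-\ell}}$ from the bridge corollary, and then divide by $g$, which is a unit in $\Z_p[[q]]$. The scale bookkeeping you flag is already handled by how the paper states \eqref{eq:C0-classicality}--\eqref{eq:uC0-classicality}, namely modulo $p^4$ with the factor $p^\ell$ written explicitly after the triangular $\phi_p$-to-$U_p$ conversion, so nothing further is needed there.
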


\begin{proof}
Apply $\Lambda_p$ to $C_{\mathrm{mix}}U_p^\ell=C_0U_p^\ell-27uC_0\,U_p^\ell$ and multiply by $g$. Using \eqref{eq:C0-classicality}, \eqref{eq:uC0-classicality}, and $\alpha_\ell\equiv 0$,
\[
g\Lambda_p(C_{\mathrm{mix}}U_p^\ell)\equiv p^\ell(\gamma_\ell-27\beta_\ell)uC_0\pmod{p^4}.
\]
By \Cref{cor:bridge-scalar}, $\gamma_\ell-27\beta_\ell\equiv 0\pmod{p^{4-\ell}}$, hence $p^\ell(\gamma_\ell-27\beta_\ell)\equiv 0\pmod{p^4}$, so $g\Lambda_p(C_{\mathrm{mix}}U_p^\ell)\equiv 0\pmod{p^4}$. Since $g=1+O(q)$ is a unit in $\Z_p[[q]]$,
\[
\Lambda_p(C_{\mathrm{mix}}U_p^\ell)\equiv 0\pmod{p^4}.\qedhere
\]
\end{proof}

\begin{theorem}[Split-prime supercongruence]\label{thm:split-final-closed}
For every split prime $p\ge 7$, $p\equiv 1\pmod 3$, and every $m\ge 1$,
\[
A_{mp}^{\mathrm{mix}}\equiv A_m^{\mathrm{mix}}\pmod{p^4}.
\]
\end{theorem}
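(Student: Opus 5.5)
The theorem should follow by assembling results already in place. The plan is to feed the mixed Cartier cancellation of \Cref{prop:mixed-cartier-cancellation} into the Katz--Dwork reduction \Cref{thm:KD-input}, and then read off coefficients through \Cref{thm:LB}.

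\textbf{Step 1.} Fix a split prime $p\ge 7$ with $p\equiv 1\pmod 3$. Invoke \Cref{prop:mixed-cartier-cancellation} to obtain
\[
\Lambda_p(C_{\mathrm{mix}}U_p^\ell)\equiv 0\pmod{p^4},\qquad \ell=1,2,3.
\]
This is exactly the hypothesis of \Cref{thm:KD-input}. Logically, this proposition carries all the depth:
\begin{itemize}[leftmargin=2em]
\item the stack-local pole estimate \Cref{prop:pole-estimate-star} at $P_-$;
\item the holomorphy transfer and classicality results \Cref{thm:KB-closed,lem:parallel-KB-closure};
\item the Atkin--Lehner bridge \Cref{cor:bridge-scalar}.
\end{itemize}
I take these as given.

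\textbf{Step 2.} Apply \Cref{thm:KD-input}. By \Cref{lem:exp-layers}(iii),(iv), write $H_{\mathrm{mix}}^{pm}=\sigma_p(H_{\mathrm{mix}}^m)e^{-mU_p}$. Pull out $H_{\mathrm{mix}}^m$ using \Cref{lem:sigma-p-linearity}. Truncate $e^{-mU_p}$ at order $3$; the tail is $p^4$-divisible for $p\ge7$. The $\ell=1,2,3$ terms then vanish by Step 1.

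The only remaining input is the $\ell=0$ term, $\Lambda_p(C_{\mathrm{mix}})\equiv C_{\mathrm{mix}}\pmod{p^4}$, supplied by the split-prime Eisenstein tower \Cref{thm:split-tower}. I would verify this directly. The $q^n$-coefficient of $C_{\mathrm{mix}}=3E_{5,\chi_0,\chi_3}-27E_{5,\chi_3,\chi_0}$ is
\[
\sum_{d\mid n}\bigl(3\chi_3(d)-27\chi_3(n/d)\bigr)d^4.
\]
Split the divisors of $pn$ according to whether $p\mid d$, and use $\chi_3(p)=1$. The divisors $d=pd'$ reproduce the $q^n$-coefficient of $C_{\mathrm{mix}}$ up to the factor $p^4$. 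The divisors with $p\nmid d$ contribute a $p^4$-multiple in the $\chi_0,\chi_3$ piece, because the weight-$5$ Euler factor at $p$ has roots $1$ and $p^4$. The constant term is preserved because $\Lambda_p$ fixes it.

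I expect this bookkeeping of the second Eisenstein component to be the one place needing care. Its Euler factor is $(1-\chi_3(p)X)(1-p^4X)$, and the unit-root eigenvalue $\chi_3(p)$ equals $1$ precisely at split primes.

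\textbf{Step 3.} The conclusion of Step 2 is the scalar congruence
\[
\Lambda_p(C_{\mathrm{mix}}H_{\mathrm{mix}}^{pm})\equiv C_{\mathrm{mix}}H_{\mathrm{mix}}^m\pmod{p^4}.
\]
Take $[q^m]$ of both sides. The left side is $[q^{mp}]C_{\mathrm{mix}}H_{\mathrm{mix}}^{mp}=A_{mp}^{\mathrm{mix}}$, and the right side is $A_m^{\mathrm{mix}}$ by \Cref{thm:LB}. This proves the theorem for every $m\ge1$. Integrality holds throughout because $C_{\mathrm{mix}},H_{\mathrm{mix}}\in\Z[[q]]$.
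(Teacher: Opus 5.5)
Your proposal is correct and matches the paper's argument: \Cref{prop:mixed-cartier-cancellation} supplies the three Cartier identities, \Cref{thm:KD-input} (with \Cref{thm:split-tower} for the $\ell=0$ term) turns them into the scalar Katz--Dwork congruence, and \Cref{thm:LB} extracts $A_{mp}^{\mathrm{mix}}\equiv A_m^{\mathrm{mix}}\pmod{p^4}$. One small correction to your optional direct check of the tower: the divisors $d\mid pn$ with $p\nmid d$ do not give a $p^4$-multiple, but contribute identically to $c_{pn}^{\mathrm{mix}}$ and $c_n^{\mathrm{mix}}$ (this uses $\chi_3(p)=1$ in the $\beta$-part), so that $c_{pn}^{\mathrm{mix}}-c_n^{\mathrm{mix}}=p^4\bigl(c_n^{\mathrm{mix}}-c_{n/p}^{\mathrm{mix}}\bigr)$, with $c_{n/p}^{\mathrm{mix}}:=0$ when $p\nmid n$.
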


\begin{proof}
By \Cref{prop:mixed-cartier-cancellation}, $\Lambda_p(C_{\mathrm{mix}}U_p^\ell)\equiv 0\pmod{p^4}$ for $\ell=1,2,3$. \Cref{thm:KD-input} then gives, for every $m\ge 1$, the scalar Katz--Dwork congruence
\[
\Lambda_p(C_{\mathrm{mix}}H_{\mathrm{mix}}^{pm})\equiv C_{\mathrm{mix}}H_{\mathrm{mix}}^m\pmod{p^4}.
\]
Extracting the coefficient of $q^m$ and applying Lagrange--B\"urmann (\Cref{thm:LB}) gives $A_{mp}^{\mathrm{mix}}\equiv A_m^{\mathrm{mix}}\pmod{p^4}$.
\end{proof}

\begin{theorem}[Conditional transport comparison]\label{thm:transport-cancel}
Assume the truncated Dwork congruence at $(\tfrac13,\tfrac13;1)$ from \cite[Thm.~7.8]{Shvets-companion}. For every split prime $p\ge 7$, every $\ell\in\{1,2,3\}$, and every $n\ge 1$,
\[
[q^{np}]C_{\mathrm{mix}}U_p^\ell\equiv[q^{np}]C_0V_p^\ell\pmod{p^4}.
\]
\end{theorem}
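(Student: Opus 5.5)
The plan is to show that both sides of the congruence vanish modulo $p^4$ separately. For any $q$-series $f$ one has $[q^{np}]f=[q^n]\Lambda_p f$, so the statement is equivalent to
\[
\Lambda_p(C_{\mathrm{mix}}U_p^\ell)\equiv\Lambda_p(C_0V_p^\ell)\pmod{p^4}\qquad(\ell=1,2,3),
\]
read coefficientwise in degrees $n\ge1$.

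The left side is already handled unconditionally. \Cref{prop:mixed-cartier-cancellation} gives $\Lambda_p(C_{\mathrm{mix}}U_p^\ell)\equiv0\pmod{p^4}$ for split $p\ge7$ and $\ell=1,2,3$. So everything reduces to the base-side vanishing
\[
\Lambda_p(C_0V_p^\ell)\equiv0\pmod{p^4},\qquad \ell=1,2,3,
\]
and this is where the assumed companion congruence \cite[Thm.~7.8]{Shvets-companion} enters.

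At the diagonal point $(\tfrac13,\tfrac13;1)$ the natural Lagrange--B\"urmann data are $H_0:=q/u$ with kernel $C_0=F_0\cdot(q/u)\,\dd u/\dd q$, as in the proof of \Cref{thm:modular}(iii). The defect attached to $H_0$ is exactly $V_p=\log(u^p/\sigma_pu)$. I will read the companion theorem in its scalar Katz--Dwork form
\[
\Lambda_p(C_0H_0^{pm})\equiv C_0H_0^m\pmod{p^4}\qquad(m\ge1).
\]
The argument then runs the proof of \Cref{lem:layer-defect} backwards with $(t,U_p,C_{\mathrm{mix}})$ replaced by $(u,V_p,C_0)$, in three steps.
\begin{enumerate}[leftmargin=2em]
\item Apply the exponential-layer expansion of \Cref{lem:exp-layers}, whose proof only uses $V_p\in pq\Z_{(p)}[[q]]$, together with \Cref{lem:sigma-p-linearity}. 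This gives
\[
\Lambda_p(C_0H_0^{pm})=H_0^m\Lambda_p(C_0e^{-mV_p}).
\]
\item Use the Eisenstein tower for $C_0$ at split primes: from the formula $a_n=\sum_{d\mid n}\chi_3(d)d^4$, the coefficients satisfy $a_{pn}-a_n=p^4\chi_3(p)a_{n/p}$, and the constant terms agree. Hence $\Lambda_p(C_0)\equiv C_0\pmod{p^4}$, and the three congruences at $m=1,2,3$ become
\[
\sum_{\ell=1}^3\frac{(-m)^\ell}{\ell!}\Lambda_p(C_0V_p^\ell)\equiv0\pmod{p^4}.
\]
Here I divide by the unit $H_0^m\in1+q\Z[[q]]$.
\item The matrix $\bigl((-m)^\ell/\ell!\bigr)_{1\le m,\ell\le3}$ is invertible over $\Z_{(p)}$ for $p\ge7$: its determinant is a unit times a Vandermonde in $1,2,3$, whose only prime factors are $2$ and $3$. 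Inverting it gives the three base-side identities.
\end{enumerate}
Combining these with the vanishing of the left side proves the congruence.

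The main obstacle is matching the exact form of the hypothesis. If \cite[Thm.~7.8]{Shvets-companion} is stated only as a coefficient congruence $A^{(0)}_{mp}\equiv A^{(0)}_m$ rather than as the scalar $q$-series form above, the $q$-series form has to be recovered from it. To do this I would apply the coefficient statement to every $m$ and use $[q^k]\bigl(\Lambda_p(C_0H_0^{pm})\bigr)=[q^{kp}]\,C_0H_0^{pm}$, expressing these coefficients through the base sequence via the same residue substitution as in \Cref{thm:LB}. If that lift is awkward, I would instead assume the truncated congruence is stated directly on the layers $\Lambda_p(C_0V_p^\ell)$, which is what ``truncated'' suggests. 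In that case step (iii) is unnecessary and the proof is a two-line combination with \Cref{prop:mixed-cartier-cancellation}.
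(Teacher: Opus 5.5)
Your proposal is correct and follows the paper's proof. Both show each side vanishes separately modulo $p^4$: the left side by \Cref{prop:mixed-cartier-cancellation}, the right side by reading the companion congruence as $\Lambda_p(C_0V_p^\ell)\equiv0\pmod{p^4}$, followed by $[q^{np}]=[q^n]\Lambda_p$. The paper takes that congruence directly in the formal form $\Lambda_p(C_0H_0^{pX})\equiv C_0H_0^X\pmod{(p^4,X^4)}$, which is equivalent to your integer-$m$ inversion.

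One small slip: the recursion should read $a_{pn}-a_n=\chi_3(p)p^4(a_n-a_{n/p})$, i.e.\ $a_{p^{a+1}n_0}-a_{p^an_0}=\chi_3(p)^{a+1}p^{4(a+1)}a_{n_0}$. The conclusion $\Lambda_p(C_0)\equiv C_0\pmod{p^4}$ that you draw from it is still correct.
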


\begin{proof}
\Cref{prop:mixed-cartier-cancellation} gives $\Lambda_p(C_{\mathrm{mix}}U_p^\ell)\equiv 0\pmod{p^4}$. The companion theorem gives $\Lambda_p(C_0H_0^{pX})\equiv C_0H_0^X\pmod{(p^4,X^4)}$, equivalent to $\Lambda_p(C_0V_p^\ell)\equiv 0\pmod{p^4}$ for $\ell=1,2,3$. Subtracting,
\[
\Lambda_p(C_{\mathrm{mix}}U_p^\ell-C_0V_p^\ell)\equiv 0\pmod{p^4},
\]
and the $q^n$-coefficient is exactly $[q^{np}]\bigl(C_{\mathrm{mix}}U_p^\ell-C_0V_p^\ell\bigr)$.
\end{proof}

\begin{remark}[Status of external inputs]\label{rem:external-inputs}
We summarize the dependencies of \Cref{sec:WC}. The $\mu_3$-equivariance of the Frobenius lift at $P_-$ follows from functoriality of Katz's canonical subgroup under the $j=0$ stabilizer (\Cref{lem:mu3-preserves-canonical-subgroup,lem:katz-trace-descent}); the descended Katz trace operator is constructed internally on the ordinary locus, and its $q$-expansion at the cusp $\infty$ and at the cusp $0$ is computed explicitly (\Cref{lem:trace-operator-w5,lem:cartier-cusp0-expansion}); the coefficient extraction at $P_-$ uses the trace-residue formula recorded in \Cref{lem:trace-residue-tame}; cusp-$0$ regularity of $\phi_p$ and of the two Eisenstein lines $C_0\psi^\ell$, $uC_0\psi^\ell$ is checked directly on the Fricke--Tate chart (\Cref{lem:cusp-0-regularity,lem:cusp-0-regularity-basis}); the local versus global Cartier--Hecke comparison is separated explicitly (\Cref{lem:cartier-hecke-local,lem:holomorphy-transfer}); the weakly holomorphic Atkin--Lehner application is reduced to a double-coset identity acting on finite cusp principal parts (\Cref{lem:atkin-lehner-intertwining}); the unit leading coefficient at $P_-$ is proved from the theta representation and the local unit computation (\Cref{lem:C0-theta-representation,lem:Pminus-unit-leading}); and mod $p^N$ classicality is established directly through the coarse curve $\mathbb P^1_u$ together with the quotient-stack chart at $P_-$, without invoking any cohomological base-change theorem (\Cref{lem:level3-divisor-chart-C0,lem:M5-integral-mod-pN}). The divided finite differences are made integral by the elementary stacky weak $q$-expansion saturation at the non-stacky cusp $P_\infty$ (\Cref{lem:weak-q-saturation,lem:divisibility-Nell}); this uses only integrality of the special fibre and does not invoke Katz's canonical $q$-expansion principle or any external mod-$p^N$ machinery. The remaining external inputs are: Katz's construction and functoriality of the canonical subgroup and ordinary normalized trace \cite[\S\S 3.10--3.11]{Katz-padic}; the tame quotient-stack local form at $P_-$ \cite[Theorem~3.2]{AOV}; the trace-residue formula for finite flat extensions of complete one-dimensional local rings \cite[\S 3]{Tate-residues}, \cite[Ch.~III]{Hartshorne-RD}; the Atkin--Lehner--Hecke double-coset identity \cite{AtkinLehner70}; and the Hecke theory of weakly holomorphic modular forms with finite cusp principal parts \cite{Guerzhoy08}.
\end{remark}

\section{Inert-prime obstruction}\label{sec:inert}

Define $C_{\mathrm{mix}}(q)=1+\sum_{n\ge 1}c_n^{\mathrm{mix}}q^n$. Set
\[
s(n):=\sum_{d\mid n}\chi_3(d)d^4,\qquad\beta(n):=\sum_{d\mid n}\chi_3(n/d)d^4.
\]
Then $c_n^{\mathrm{mix}}=3s(n)-27\beta(n)$.

\begin{theorem}[Split-prime Eisenstein tower]\label{thm:split-tower}
If $p\ge 5$ and $\chi_3(p)=1$, then for all $m,r\ge 1$, $c_{mp^r}^{\mathrm{mix}}\equiv c_{mp^{r-1}}^{\mathrm{mix}}\pmod{p^{4r}}$.
\end{theorem}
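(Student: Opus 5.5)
Since $C_{\mathrm{mix}}=3E_{5,\chi_0,\chi_3}-27E_{5,\chi_3,\chi_0}$ is a combination of two Hecke eigen-Eisenstein series, the plan is to compute the $p$-power dependence of $c_n^{\mathrm{mix}}$ exactly, with no congruence input. We factor each index as $n=p^an_0$ with $p\nmid n_0$ and use multiplicativity of the two twisted divisor sums $s(n)=\sum_{d\mid n}\chi_3(d)d^4$ and $\beta(n)=\sum_{d\mid n}\chi_3(n/d)d^4$. Both are Dirichlet convolutions of multiplicative functions ($\chi_3\cdot\mathrm{id}^4$ with $\mathbf 1$, and $\chi_3$ with $\mathrm{id}^4$), hence multiplicative. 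Since $\gcd(p^a,n_0)=1$, we get $s(p^an_0)=s(p^a)s(n_0)$ and $\beta(p^an_0)=\beta(p^a)\beta(n_0)$.

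Next we evaluate the prime-power factors using $\chi_3(p)=1$:
\[
s(p^a)=\sum_{i=0}^a\chi_3(p^i)p^{4i}=\sum_{i=0}^a p^{4i},\qquad \beta(p^a)=\sum_{i=0}^a\chi_3(p^{a-i})p^{4i}=\sum_{i=0}^a p^{4i}.
\]
The key point is that at split primes the two Euler factors coincide; call the common value $\sigma_a:=\sum_{i=0}^ap^{4i}$. Then
\[
c^{\mathrm{mix}}_{p^an_0}=\sigma_a\bigl(3s(n_0)-27\beta(n_0)\bigr)=\sigma_a\,c^{\mathrm{mix}}_{n_0},
\]
and the whole $p$-dependence factors out of the mixed combination. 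This is exactly where $\chi_3(p)=1$ enters; for inert $p$ the second factor would be $\sum_i(-1)^{a-i}p^{4i}$, which does not factor out.

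For $m=p^{b}m_0$ with $p\nmid m_0$, put $a=b+r\ge r\ge1$. Then
\[
c^{\mathrm{mix}}_{mp^r}-c^{\mathrm{mix}}_{mp^{r-1}}=(\sigma_a-\sigma_{a-1})\,c^{\mathrm{mix}}_{m_0}=p^{4a}c^{\mathrm{mix}}_{m_0}.
\]
Since $c^{\mathrm{mix}}_{m_0}\in\Z$ and $4a\ge4r$, the difference is divisible by $p^{4r}$, as claimed. The hypothesis $p\ge5$ is only there so that $p\ne2,3$ and $\chi_3(p)$ is a genuine nonzero value; nothing else depends on it.

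There is no real obstacle here. The only point needing care is checking that $\chi_3$ sits on the correct variable ($d$ versus $n/d$) in each sum, following the convention fixed in the Notation paragraph. This is what makes both prime-power factors equal $\sigma_a$ at split primes. The argument also covers $k=0$ in the form used in \Cref{prop:mainfrobenius}: the constant terms $c_0^{\mathrm{mix}}=3\cdot\tfrac13-0=1$ are trivially equal.
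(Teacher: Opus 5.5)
Your proof is correct and is essentially the paper's argument. Both use multiplicativity of $s$ and $\beta$, observe that at $\chi_3(p)=1$ both prime-power factors equal $1+p^4+\cdots+p^{4N}$, and read off the exact difference $p^{4(b+r)}$ times the $p$-free part. Factoring $\sigma_a$ out of the mixed coefficient in one step, rather than treating $s$ and $\beta$ separately, is only a cosmetic difference.
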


\begin{proof}
Both $s=1*(\chi_3\,\mathrm{id}^4)$ and $\beta=\chi_3*\mathrm{id}^4$ are multiplicative. For $p\ne 3$,
\[
s(p^N)=\sum_{j=0}^N\chi_3(p)^jp^{4j},\qquad\beta(p^N)=\sum_{j=0}^N\chi_3(p)^{N-j}p^{4j}.
\]
At $\chi_3(p)=1$ both equal $1+p^4+\cdots+p^{4N}$. Writing $m=p^am_0$ with $(m_0,p)=1$,
$s(mp^r)-s(mp^{r-1})=p^{4(a+r)}s(m_0)$, and similarly for $\beta$, giving the claim.
\end{proof}

\begin{theorem}[Inert-prime $q$-side obstruction]\label{thm:inert-obstruction}
Let $p\ge 5$, $\chi_3(p)=-1$. Then $\beta(p)-\beta(1)=p^4-2\not\equiv 0\pmod p$, and more generally if $m=p^am_0$ with $p\nmid m_0$ and $\beta(m_0)\not\equiv 0\pmod p$,
\[
\vp(c_{mp}^{\mathrm{mix}}-c_m^{\mathrm{mix}})=0.
\]
The obstruction comes from $\sum_N\beta(p^N)X^N=1/((1-\chi_3(p)X)(1-p^4X))$, whose unit-root factor is $(1+X)^{-1}$ at inert primes.
\end{theorem}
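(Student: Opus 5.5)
The plan is a direct computation with the multiplicative divisor sums $s$ and $\beta$, following the same scheme as the proof of \Cref{thm:split-tower} but now with $\chi_3(p)=-1$. Both $s=1*(\chi_3\,\mathrm{id}^4)$ and $\beta=\chi_3*\mathrm{id}^4$ are Dirichlet convolutions of completely multiplicative functions. Hence their prime-power values are read off from the Euler factors:
\[
\sum_{N\ge0}s(p^N)X^N=\frac{1}{(1-X)(1-\chi_3(p)p^4X)},\qquad
\sum_{N\ge0}\beta(p^N)X^N=\frac{1}{(1-\chi_3(p)X)(1-p^4X)}.
\]
The second identity is the displayed generating function in the statement, and at $\chi_3(p)=-1$ its unit-root factor is $(1+X)^{-1}$. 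Explicitly this gives
\[
s(p^N)=\sum_{j=0}^N(-1)^jp^{4j}\equiv 1\pmod p,\qquad
\beta(p^N)=\sum_{j=0}^N(-1)^{N-j}p^{4j}\equiv(-1)^N\pmod p.
\]
The special case $N=1$ gives $\beta(p)=p^4-1$, so $\beta(p)-\beta(1)=p^4-2\equiv-2\not\equiv0\pmod p$ for $p\ge5$.

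For the general statement, write $m=p^am_0$ with $p\nmid m_0$ and use multiplicativity, $s(mp)=s(m_0)s(p^{a+1})$ and similarly for $\beta$. The $s$-part of the difference is
\[
s(mp)-s(m)=s(m_0)\bigl(s(p^{a+1})-s(p^a)\bigr)=(-1)^{a+1}p^{4(a+1)}s(m_0)\equiv0\pmod p.
\]
So the $s$-component contributes nothing modulo $p$. The $\beta$-part is
\[
\beta(mp)-\beta(m)=\beta(m_0)\bigl(\beta(p^{a+1})-\beta(p^a)\bigr)\equiv\beta(m_0)\bigl((-1)^{a+1}-(-1)^a\bigr)=2(-1)^{a+1}\beta(m_0)\pmod p.
\]
This is the branch-exchange phenomenon: the unit root $-1$ flips sign with each extra factor of $p$.

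Combining the two parts through $c_n^{\mathrm{mix}}=3s(n)-27\beta(n)$ gives
\[
c_{mp}^{\mathrm{mix}}-c_m^{\mathrm{mix}}\equiv 54(-1)^{a}\beta(m_0)\pmod p.
\]
Since $p\ge5$, the factor $54=2\cdot 3^3$ is a $p$-adic unit. By hypothesis $\beta(m_0)\not\equiv0\pmod p$, so the right side is a unit and $\vp(c_{mp}^{\mathrm{mix}}-c_m^{\mathrm{mix}})=0$.

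There is no genuine obstacle here. The only points needing care are the order of the characters in $\beta$: $\chi_3$ acts on $n/d$, so the unit-root sign $(-1)^{N-j}$ attaches to $\beta$ and not to $s$. One must also not forget that $3$ and $27$ stay units since $p\ne3$. I would end with a remark that the conclusion is already visible on $C_{\mathrm{mix}}$ itself, the $m=0$ layer of the Katz--Dwork congruence. This is why the $p^4$ formal-parameter congruence (indeed even mod $p$) fails at inert primes.
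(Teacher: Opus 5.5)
Your proposal is correct and follows essentially the same route as the paper: prime-power values of $s$ and $\beta$ from the Euler factors, multiplicativity in $m=p^am_0$, the $s$-part vanishing modulo $p$, and the $\beta$-part contributing the unit $\pm2\beta(m_0)$. You also make explicit the combined residue $c_{mp}^{\mathrm{mix}}-c_m^{\mathrm{mix}}\equiv 54(-1)^a\beta(m_0)\pmod p$, including the factor $-27$ that the paper leaves implicit.
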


\begin{proof}
For $\chi_3(p)=-1$, $\beta(1)=1$ and $\beta(p)=-1+p^4$, so $\beta(p)-\beta(1)=p^4-2$. For general $m=p^am_0$,
\[
\beta(mp)-\beta(m)=\bigl(\beta(p^{a+1})-\beta(p^a)\bigr)\beta(m_0).
\]
Modulo $p$, $\beta(p^N)\equiv(-1)^N$, so the parenthesis is $-2(-1)^a\pmod p$, a unit. The $s$-part has valuation $\ge 4$, so the mixed coefficient difference has valuation $0$ whenever $\beta(m_0)$ is a unit.
\end{proof}

\begin{proposition}[No split-style formal $q$-congruence at inert primes]\label{prop:no-inert-dwork}
If $p\ge 5$ is inert, then $\Lambda_p(C_{\mathrm{mix}}H_{\mathrm{mix}}^{pX})\equiv C_{\mathrm{mix}}H_{\mathrm{mix}}^X\pmod{(p^4,X^4)}$ cannot hold.
\end{proposition}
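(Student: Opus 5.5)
We argue by contradiction and reduce the formal congruence to a statement about $\Lambda_p(C_{\mathrm{mix}})$ alone, which \Cref{thm:inert-obstruction} then refutes. Suppose that
\[
\Lambda_p(C_{\mathrm{mix}}H_{\mathrm{mix}}^{pX})\equiv C_{\mathrm{mix}}H_{\mathrm{mix}}^X\pmod{(p^4,X^4)}
\]
holds in $\Z_{(p)}[[q]][X]/(p^4,X^4)$. Setting $X=0$ is legitimate: both sides are formal exponentials in $X$, and the constant coefficient in $X$ of $H_{\mathrm{mix}}^{pX}=\exp(pX\log H_{\mathrm{mix}})$ is $1$. The result is $\Lambda_p(C_{\mathrm{mix}})\equiv C_{\mathrm{mix}}\pmod{p^4}$. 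Alternatively, we can use \Cref{lem:exp-layers}(iii) together with \Cref{lem:sigma-p-linearity} to rewrite the left side as $H_{\mathrm{mix}}^X\Lambda_p(C_{\mathrm{mix}}e^{-XU_p})$. Since $H_{\mathrm{mix}}^X$ is invertible, the hypothesis becomes $\Lambda_p(C_{\mathrm{mix}}e^{-XU_p})\equiv C_{\mathrm{mix}}$, and its $X^0$ coefficient is again $\Lambda_p(C_{\mathrm{mix}})\equiv C_{\mathrm{mix}}\pmod{p^4}$.

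Next we read off coefficients. The $q^m$-coefficient of the congruence $\Lambda_p(C_{\mathrm{mix}})\equiv C_{\mathrm{mix}}$ says $c^{\mathrm{mix}}_{mp}\equiv c^{\mathrm{mix}}_m\pmod{p^4}$ for every $m\ge1$. We take $m=1$. Then $c^{\mathrm{mix}}_p-c^{\mathrm{mix}}_1=3(s(p)-s(1))-27(\beta(p)-\beta(1))$. Because $\chi_3(p)=-1$, we have $s(p)-s(1)=-p^4$ and $\beta(p)-\beta(1)=p^4-2$. Hence
\[
c^{\mathrm{mix}}_p-c^{\mathrm{mix}}_1=-3p^4-27(p^4-2)\equiv 54\pmod p,
\]
which is a unit for $p\ge5$. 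This is precisely \Cref{thm:inert-obstruction} with $m_0=1$, $a=0$ and $\beta(1)=1$. So $c^{\mathrm{mix}}_p\not\equiv c^{\mathrm{mix}}_1$ even modulo $p$, and the assumed congruence fails.

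The only point needing care is the first step: we must check that the formal congruence modulo $(p^4,X^4)$ really specializes at $X=0$. This holds because the $X$-adic filtration is compatible with reduction modulo $p^4$. Equivalently, working in the rewritten form with $e^{-XU_p}$, the constant term of $e^{-XU_p}$ in $X$ is exactly $1$, and the truncation of \Cref{lem:exp-layers}(ii) is valid for all $p\ge5$, since the tail bound for $k\ge4$ uses only $v_p(4!)=0$. With that in place the proof is immediate.
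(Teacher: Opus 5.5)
Your proposal is correct and follows the paper's proof. Specialize at $X=0$ to get $\Lambda_p(C_{\mathrm{mix}})\equiv C_{\mathrm{mix}}\pmod{p^4}$, i.e.\ $c_{mp}^{\mathrm{mix}}\equiv c_m^{\mathrm{mix}}\pmod{p^4}$, and contradict \Cref{thm:inert-obstruction}; your explicit choice $m=1$, giving $c_p^{\mathrm{mix}}-c_1^{\mathrm{mix}}=54-30p^4$, is just the case $a=0$, $m_0=1$ of that theorem. The detour through $e^{-XU_p}$ and \Cref{lem:exp-layers}(ii) is unnecessary, since only the $X^0$ coefficient is used. So is the claimed equivalence after dividing by $H_{\mathrm{mix}}^X$, which is not $p$-integral modulo $X^4$.
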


\begin{proof}
Set $X=0$: the congruence implies $c_{mp}^{\mathrm{mix}}\equiv c_m^{\mathrm{mix}}\pmod{p^4}$ for every $m\ge 1$, contradicting \Cref{thm:inert-obstruction}.
\end{proof}

For the coefficient-level statement, it is useful to keep both Eisenstein branches visible. For \(\varepsilon\in\{\pm1\}\), define
\[
C^{(\varepsilon)}(q):=C_0(q)-27\varepsilon\,u(q)C_0(q),
\qquad
A_m^{(\varepsilon)}:=[q^m]C^{(\varepsilon)}(q)H_{\mathrm{mix}}(q)^m.
\]
Thus \(A_m^{(+1)}=A_m^{\mathrm{mix}}\), while \(A_m^{(-1)}\) is the conjugate Eisenstein branch obtained by changing the sign of the second Eisenstein component.

\begin{theorem}[Inert Cartier parity on the coefficient sequence]\label{thm:inert-cartier-parity}
For every prime \(p\ge5\), every \(r\ge0\), and every \(m\ge0\),
\[
A_{mp^r}^{\mathrm{mix}}\equiv A_m^{(\chi_3(p)^r)}\pmod p.
\]
Consequently, if \(p\equiv1\pmod3\), then
\[
A_{mp^r}^{\mathrm{mix}}\equiv A_m^{\mathrm{mix}}\pmod p,
\]
whereas if \(p\equiv2\pmod3\), then
\[
A_{mp^{2r}}^{\mathrm{mix}}\equiv A_m^{\mathrm{mix}}\pmod p,
\qquad
A_{mp^{2r+1}}^{\mathrm{mix}}\equiv A_m^{(-1)}\pmod p.
\]
\end{theorem}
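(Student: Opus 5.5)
The plan is to prove the slightly stronger branch-indexed statement
\[
A_{mp}^{(\varepsilon)}\equiv A_m^{(\varepsilon\chi_3(p))}\pmod p\qquad(\varepsilon\in\{\pm1\},\ m\ge0),
\]
and then induct on $r$. Starting from $\varepsilon=+1$, where $A^{(+1)}=A^{\mathrm{mix}}$, $r$ applications give the branch $\varepsilon=\chi_3(p)^r$. The parity consequences then follow from $\chi_3(p)^r=1$ for split $p$, and from $\chi_3(p)^r=(-1)^r$ for inert $p$.

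\emph{Step 1: Frobenius on the kernel.} By \Cref{thm:modular}(iv), $H_{\mathrm{mix}}\in\Z[[q]]$, so coefficientwise Frobenius gives $H_{\mathrm{mix}}(q)^p\equiv H_{\mathrm{mix}}(q^p)\pmod p$. Hence $H_{\mathrm{mix}}(q)^{mp}\equiv\sigma_p(H_{\mathrm{mix}}^m)\pmod p$. The series $C^{(\varepsilon)}=3E_{5,\chi_0,\chi_3}-27\varepsilon E_{5,\chi_3,\chi_0}$ has integral coefficients: its constant term is $1$ and the rest are integer divisor sums. By \Cref{lem:sigma-p-linearity} we then get
\[
A_{mp}^{(\varepsilon)}=[q^m]\Lambda_p\bigl(C^{(\varepsilon)}H_{\mathrm{mix}}^{mp}\bigr)\equiv[q^m]H_{\mathrm{mix}}^m\,\Lambda_p\bigl(C^{(\varepsilon)}\bigr)\pmod p.
\]

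\emph{Step 2: Cartier action on the two Eisenstein lines mod $p$.} With $s(n),\beta(n)$ as in \Cref{thm:inert-obstruction}, consider a divisor $d\mid np$. If $p\mid d$, the term $d^4$ vanishes mod $p$. If $p\nmid d$, then $d\mid n$. This gives two reductions:
\[
s(np)\equiv\sum_{d\mid n}\chi_3(d)d^4=s(n),\qquad \beta(np)\equiv\sum_{d\mid n}\chi_3(p)\chi_3(n/d)d^4=\chi_3(p)\beta(n)\pmod p.
\]
The constant terms are $1$ and $0$ respectively, and $\Lambda_p$ preserves them. Therefore $\Lambda_p(C_0)\equiv C_0$ and $\Lambda_p(uC_0)\equiv\chi_3(p)\,uC_0\pmod p$. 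Combining, $\Lambda_p(C^{(\varepsilon)})\equiv C^{(\varepsilon\chi_3(p))}\pmod p$. Inserting this into Step 1 yields the branch-indexed congruence.

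\emph{Step 3: Induction and corollaries.} Apply the branch-indexed congruence with $m$ replaced by $mp^{r-1}$, inducting on $r$; the case $r=0$ is trivial. This gives $A^{\mathrm{mix}}_{mp^r}\equiv A_m^{(\chi_3(p)^r)}$. The stated split and inert specializations are then read off directly.

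No step is a serious obstacle. The only point needing care is Step 2, where the twist $\chi_3(n/d)=\chi_3(p)\chi_3(n/d')$ must be tracked correctly for $d=d'$ and $p\nmid d$. One must also confirm that $3$ and $27$ cause no denominators, which holds because $C_0=3E_{5,\chi_0,\chi_3}$ is integral.
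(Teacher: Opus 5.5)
Your proposal is correct and follows the paper's argument. You reduce $H_{\mathrm{mix}}^{mp}$ to $\sigma_p(H_{\mathrm{mix}}^m)$ modulo $p$, pull it through $\Lambda_p$, and use that $C_0$ and $uC_0$ are $\Lambda_p$-eigenvectors modulo $p$ with eigenvalues $1$ and $\chi_3(p)$; the only cosmetic difference is that you prove a one-step statement for both branches and induct on $r$ via a direct divisor-sum check, whereas the paper applies $\Lambda_p^r$ at once using the prime-power formulas for $s(p^N)$ and $\beta(p^N)$.
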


\begin{proof}
Write \(\Lambda_p^r\) for the \(r\)-fold Cartier operator. By \Cref{thm:LB},
\[
A_{mp^r}^{\mathrm{mix}}=[q^{mp^r}]C_{\mathrm{mix}}(q)H_{\mathrm{mix}}(q)^{mp^r}.
\]
Since \(H_{\mathrm{mix}}\in1+q\Z[[q]]\), the freshman Frobenius congruence gives
\[
H_{\mathrm{mix}}(q)^{mp^r}\equiv H_{\mathrm{mix}}(q^{p^r})^m\pmod p.
\]
Therefore
\[
A_{mp^r}^{\mathrm{mix}}
\equiv [q^{mp^r}]C_{\mathrm{mix}}(q)H_{\mathrm{mix}}(q^{p^r})^m
=[q^m]\Lambda_p^r(C_{\mathrm{mix}})(q)H_{\mathrm{mix}}(q)^m
\pmod p.
\]
It remains to compute \(\Lambda_p^r(C_{\mathrm{mix}})\) modulo \(p\). The constant term is preserved on \(C_0\) and is zero on \(uC_0\). For positive coefficients, write \(n=p^an_0\) with \(p\nmid n_0\), and set \(\chi=\chi_3(p)\). From
\[
s(p^N)=\sum_{j=0}^N\chi^jp^{4j},\qquad
\beta(p^N)=\sum_{j=0}^N\chi^{N-j}p^{4j},
\]
we get
\[
s(p^{a+r}n_0)\equiv s(p^an_0)\pmod p,
\qquad
\beta(p^{a+r}n_0)\equiv \chi^r\beta(p^an_0)\pmod p.
\]
Hence
\[
\Lambda_p^r(C_0)\equiv C_0\pmod p,
\qquad
\Lambda_p^r(uC_0)\equiv \chi_3(p)^r uC_0\pmod p,
\]
and so
\[
\Lambda_p^r(C_{\mathrm{mix}})
\equiv C_0-27\chi_3(p)^r uC_0
=C^{(\chi_3(p)^r)}
\pmod p.
\]
Substitution gives the asserted congruence.
\end{proof}

\begin{corollary}[One-step inert failure on the coefficient sequence]\label{cor:inert-Ap}
For every inert prime \(p\ge5\),
\[
A_p^{\mathrm{mix}}\equiv72\pmod p.
\]
In particular, since \(A_1^{\mathrm{mix}}=18\),
\[
A_p^{\mathrm{mix}}-A_1^{\mathrm{mix}}\equiv54\not\equiv0\pmod p,
\]
so the split-style congruence \(A_p^{\mathrm{mix}}\equiv A_1^{\mathrm{mix}}\) fails already modulo \(p\).
\end{corollary}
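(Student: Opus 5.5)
Specialize \Cref{thm:inert-cartier-parity} at $r=1$, $m=1$. Since $\chi_3(p)=-1$ for inert $p$, it gives
\[
A_p^{\mathrm{mix}}\equiv A_1^{(-1)}=[q^1]\bigl(C_0+27\,uC_0\bigr)H_{\mathrm{mix}}\pmod p,
\]
so everything reduces to computing one explicit integer from the first two $q$-coefficients of the series involved.

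First expand each factor to order $q^1$. From the Notation, $C_0=3E_{5,\chi_0,\chi_3}=1+3\bigl(\sum_{d\mid n}\chi_3(d)d^4\bigr)q^n+\cdots$. Its $q^1$ coefficient is $3$, so $C_0=1+3q+O(q^2)$. Also $uC_0=E_{5,\chi_3,\chi_0}=q+O(q^2)$ by \Cref{thm:modular}(iii), and hence
\[
C^{(-1)}=C_0+27uC_0=1+30q+O(q^2).
\]
For the kernel, \Cref{thm:modular}(iv) gives $H_{\mathrm{mix}}=\prod_{3\nmid n}(1-q^n)^{12}(1+27u)^2$. With $u=q+O(q^2)$ this is $(1-12q)(1+54q)+O(q^2)=1+42q+O(q^2)$.

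Then
\[
[q^1]C^{(-1)}H_{\mathrm{mix}}=30+42=72.
\]
As a consistency check, the same computation with the $+$ branch gives $C_{\mathrm{mix}}=1-24q+\cdots$ and $A_1^{\mathrm{mix}}=-24+42=18$, which matches the stated sequence. So $A_p^{\mathrm{mix}}\equiv 72\pmod p$. Subtracting $A_1^{\mathrm{mix}}=18$ gives $54=2\cdot 3^3$, which is nonzero mod $p$ for $p\ge5$.

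The only real risk is a sign or normalization slip in the constant term of $C_0$ or in the leading coefficient of $u$. The check against $A_1^{\mathrm{mix}}=18$ guards against both, so I expect no genuine obstacle.
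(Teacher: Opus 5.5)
Your proposal is correct and follows essentially the same route as the paper: you specialize \Cref{thm:inert-cartier-parity} at $r=m=1$ to get $A_p^{\mathrm{mix}}\equiv A_1^{(-1)}\pmod p$ and then evaluate $A_1^{(-1)}$. The only cosmetic difference is that the paper writes $A_1^{(-1)}=A_1^{\mathrm{mix}}+54\,[q]\bigl(uC_0H_{\mathrm{mix}}\bigr)=18+54$, which needs only $uC_0=q+O(q^2)$ and $H_{\mathrm{mix}}=1+O(q)$; you instead expand $C_0$, $uC_0$ and $H_{\mathrm{mix}}$ to first order (your $1+30q$ and $1+42q$ are right) and use $A_1^{\mathrm{mix}}=18$ only as a check.
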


\begin{proof}
By \Cref{thm:inert-cartier-parity}, \(A_p^{\mathrm{mix}}\equiv A_1^{(-1)}\pmod p\). Since \(uC_0=q+O(q^2)\) and \(H_{\mathrm{mix}}=1+O(q)\), one has \([q]uC_0H_{\mathrm{mix}}=1\). Thus
\[
A_1^{(-1)}-A_1^{(+1)}=54[q]uC_0H_{\mathrm{mix}}=54.
\]
The sequence begins with \(A_1^{\mathrm{mix}}=18\), hence \(A_1^{(-1)}=72\). Since \(p\ge5\) and \(p\equiv2\pmod3\), \(p\nmid54\).
\end{proof}

\begin{remark}
The split condition $p\equiv 1\pmod 3$ is essential twice: in \Cref{sec:WC} it gives the $\mu_3$-equivariant canonical Frobenius lift at $P_-$, while here $\chi_3(p)=-1$ produces the unit-root obstruction in the second Eisenstein factor.
\end{remark}

\section{Computational verification}\label{sec:comp}

All computations used exact rational arithmetic followed by reduction modulo $p^k$, with $q$-precision $N=5p^2$. Implementation in SageMath; coefficient extraction for $\Lambda_p$ and the Hecke finite difference performed independently.

\begin{enumerate}[label=\textup{(\arabic*)}]
\item \emph{Closure consequence at split primes.} The local pole estimate \Cref{prop:pole-estimate-star} is not tested directly through the unknown local coefficients $\alpha_i$. Instead, we verify the global consequences
\[
\frac{g\Lambda_p(C_0 U_p^\ell)}{p^\ell}\equiv\gamma_\ell\,uC_0\pmod{p^{4-\ell}},\qquad
\frac{g\Lambda_p(uC_0 U_p^\ell)}{p^\ell}\equiv\beta_\ell\,uC_0\pmod{p^{4-\ell}},
\]
for $p\in\{7,13,19,31\}$ and $\ell\in\{1,2,3\}$. All $24$ residuals vanish identically through $q^{5p}$ after Cartier extraction. For $p=7$ the bridge scalars are
$(\gamma_1,\beta_1)=(95,283)$, $(\gamma_2,\beta_2)=(36,34)$, $(\gamma_3,\beta_3)=(6,1)$;
for $p=13$,
$(\gamma_1,\beta_1)=(363,1234)$, $(\gamma_2,\beta_2)=(33,20)$, $(\gamma_3,\beta_3)=(4,4)$.
In every case $\gamma_\ell-27\beta_\ell\equiv 0\pmod{p^{4-\ell}}$. The Cartier--Hecke comparison \Cref{lem:cartier-hecke-local} is verified independently by computing $\mathfrak E_\ell$ directly.

\item \emph{Bridge scalar.} The cancellation $\gamma_\ell-27\beta_\ell\equiv 0\pmod{p^{4-\ell}}$ from \Cref{cor:bridge-scalar} was checked directly at $p=7$ for $\ell=1,2,3$.

\item \emph{Layer divisibility.} $\phi_p\in pq\Z_{(p)}[[q]]$ verified coefficient-wise through $q^{5p^2}$ for $p\in\{7,13,19,31\}$.

\item \emph{Hecke numerator saturation.} For $f\in\{C_0,uC_0\}$, $p\in\{7,13\}$, and $\ell\in\{1,2,3\}$, the numerator $N_\ell(f)$ of \Cref{lem:divisibility-Nell} was checked to satisfy $q_\infty(N_\ell(f))\in p^\ell\Z_p((q))$, with exact minimum valuation $\ell$ in every tested case.

\item \emph{Scalar Katz--Dwork congruence.} The scalar congruence $\Lambda_p(C_{\mathrm{mix}}H_{\mathrm{mix}}^{pm})\equiv C_{\mathrm{mix}}H_{\mathrm{mix}}^m\pmod{p^4}$ was checked through the equivalent truncated $X$-expansion for $p\in\{7,13,19,31\}$ to precision $5p^2$.

\item \emph{Inert branch-swap.} $p\in\{5,11\}$: the Eisenstein coefficient tower fails with valuation $0$, as predicted by \Cref{thm:inert-obstruction}. The tested $A$-level samples agree with the theorem-level Cartier parity law of \Cref{thm:inert-cartier-parity}; in particular $A_p^{\mathrm{mix}}\equiv72\pmod p$ and $A_p^{\mathrm{mix}}-18$ has valuation $0$, as predicted by \Cref{cor:inert-Ap}.

\item \emph{MUM signature.} $A_{p-1}^{\mathrm{mix}}\equiv 0\pmod p$ holds for all tested split $p$ with $7\le p\le 97$ and fails at all tested inert $p$ with $5\le p\le 83$.
\end{enumerate}

\section*{Conclusion}

The split-prime $p^4$ supercongruence
\[
A_{mp}^{\mathrm{mix}}\equiv A_m^{\mathrm{mix}}\pmod{p^4}
\]
at the mixed CM point $(\tfrac16,\tfrac13;1)$ is proved unconditionally for every split prime $p\ge 7$, $p\equiv 1\pmod 3$, and every $m\ge 1$. The matching inert-prime behavior is also theorem-level. On the $q$-side, the Eisenstein coefficient tower rules out the split-style formal parameter congruence at inert primes. On the coefficient sequence itself, \Cref{thm:inert-cartier-parity} proves the exact first-digit branch law
\[
A_{mp^r}^{\mathrm{mix}}\equiv A_m^{(\chi_3(p)^r)}\pmod p,
\]
so for inert primes one Frobenius step exchanges the two Eisenstein branches and two steps return to the original branch. In particular $A_p^{\mathrm{mix}}\equiv72\pmod p$ for every inert $p\ge5$, and $A_p^{\mathrm{mix}}\not\equiv A_1^{\mathrm{mix}}\pmod p$. The local mechanism producing the extra digit beyond the Roberts--Rodriguez Villegas weight-$3$ prediction is the $\mu_3$-equivariant length-three Witt--Cartier pole estimate at the tame elliptic stack point $j=0$. Natural next questions are the higher-precision split tower $A_{mp^r}^{\mathrm{mix}}\equiv A_{mp^{r-1}}^{\mathrm{mix}}\pmod{p^{4r}}$ for $r\ge 2$, the inert Frobenius-square strengthening $A_{mp^2}^{\mathrm{mix}}\equiv A_m^{\mathrm{mix}}\pmod{p^4}$, and the third order-drop CM point $(\tfrac16,\tfrac16;1)$ with $\lambda=432$.

\end{document}